\documentclass[10pt]{article}
\usepackage{amsfonts,amssymb,amsmath}

\usepackage[pagebackref]{hyperref}

\usepackage{xcolor}

\textwidth=16.00cm
\textheight=22.00cm
\topmargin=0.00cm
\oddsidemargin=0.00cm 
\evensidemargin=0.00cm
\headheight=0cm
\headsep=1cm
\headsep=0.5cm 
\numberwithin{equation}{section}
\setlength{\parskip}{3pt}

\def\N_0{\mathbb{N}_0}
\def\balpha{\boldsymbol{\alpha}}

\def\aff{\textrm{Aff\/}}

\def\X{\mathcal{X}}

\def\K{\mathbb{F}_q}

\newtheorem{theorem}{Theorem}[section]
\newtheorem{lemma}[theorem]{Lemma}
\newtheorem{proposition}[theorem]{Proposition}

\newtheorem{defin}[theorem]{Definition}

\newenvironment{proof}{\noindent \textbf{Proof: }}{\hfill
$\Box$  \vspace{1ex}}
\newenvironment{definition}{\begin{defin}\em}{\end{defin}}
\newtheorem{defins}[theorem]{Definitions}
\newenvironment{definitions}{\begin{defins}\em}{\end{defins}}
\newtheorem{exs}[theorem]{Examples}

\newtheorem{ex}[theorem]{Example}

\newtheorem{rem}[theorem]{Remark}

\newtheorem{rems}[theorem]{Remarks}

\newtheorem{corollary}[theorem]{Corollary}

\newcommand{\negrito}[1]{\mbox{{\boldmath$#1$}}}

\openup 3.5pt

\begin{document}



\begin{center}
{\Large\textbf{An extension of Delsarte, Goethals and Mac Williams theorem on 
minimal weight codewords to 
a class of Reed-Muller type codes }}
\end{center}
\vspace{3ex}

\noindent\begin{center} 
\textsc{C\'{\i}cero  Carvalho and Victor G.L. Neumann}\footnote{Both authors
were partially supported by  grants from CNPq and FAPEMIG.} \\
{\em\small Faculdade de Matem\'atica \\ Universidade Federal de Uberl\^andia \\ 
Av.\ J.\ N.\ \'Avila 2121, 38.408-902 - Uberl\^andia - MG, Brazil 
\\{\small  cicero@ufu.br \ \ \ \ \ \ \ \ \  victor.neumann@ufu.br}} \end{center}
\vspace{3ex}


\noindent
{\footnotesize \textbf{Abstract.} 
 In 1970 Delsarte, Goethals and Mac Williams published a seminal paper on 
 generalized Reed-Muller codes where, among many important results, they 
 proved that the minimal weight codewords of these codes are obtained through 
 the evaluation of certain polynomials which are a specific product of linear 
 factors, which they describe. In the present paper we extend this result to a 
 class of Reed-Muller type codes defined on a product of (possibly distinct) 
 finite fields of the same characteristic. The paper also brings an expository 
 section on the study of  the 
 structure of low weight codewords, not only for affine Reed-Muller type 
 codes, but also for the projective ones.}
\vspace{2ex}

\section{Introduction with a hystorical survey}\label{intro}

Let $\K$ a field with $q$ elements, let $K_1,\ldots, K_n$ be 
a collection of non-empty subsets of $\K$, and let 
\begin{eqnarray*}
\mathcal{X}:= K_1\times\cdots\times K_n:=\left\{(\alpha_1:\cdots :\alpha_n) \vert\, \alpha_i\in
K_i
\mbox{ for all } i\right\}\subset \K^n. 
\end{eqnarray*}

Let $d_i := | K_i |$ for $i = 1,\ldots, n$, so 
clearly $| \mathcal{X}| = \prod_{i = 1}^n d_i =: m$, and let 
$\mathcal{X} = \{\balpha_1, \ldots, \balpha_m\}$. It is not difficult to check that the
ideal of polynomials in $\K[X_1, \ldots, X_n]$ which vanish on $\mathcal{X}$ is
$I_\mathcal{X} = (\prod_{\alpha_1 \in K_1} (X_1 - \alpha_1),\ldots ,  \prod_{\alpha_n \in K_n} (X_n - \alpha_n))$ (see e.g.
\cite[Lemma 2.3]{lopez-villa} or \cite[Lemma 3.11]{car2}). From this we get that
the evaluation morphism $\Psi: \K[X_1, \ldots, X_n]/I_{\mathcal{X}} \rightarrow
\K^m$  given by $P + I_{\mathcal{X}} \mapsto ( P(\balpha_1), \ldots, 
P(\balpha_m) )$ is 
well-defined and
injective. Actually, this is an isomorphism of $\K$-vector spaces because for
each $i \in \{1, \ldots, m\}$ there exists a  polynomial $P_i$ such that
$P_i(\balpha_j)$ is equal to $1$, if $j = i$, or $0$, if $j \neq i$, so that $\Psi$ is
also surjective.

\begin{definition}  \label{def1}
Let $d$ be a nonnegative integer. 
The {\em affine cartesian code} (of order $d$) $\mathcal{C}_\mathcal{X}(d)$
defined over the sets $K_1, \ldots, K_n$ is the image, by $\Psi$, of the set of
the classes of all polynomials of degree up to $d$, together with the class of
the zero polynomial.
\end{definition}  
 
%
%

These codes appeared
independently  in \cite{lopez-villa} and \cite{Geil} (in \cite{Geil}  in a 
generalized form). In the special case
where $K_1 = \cdots = K_n = \K$ we have the well-known generalized Reed-Muller
code of order $d$. In \cite{lopez-villa} the authors prove that we may ignore,
in the cartesian product, sets with just one element and moreover may always
assume that $2 \leq d_1 \leq \cdots \leq d_n$. They also determine the dimension
and the minimum distance of these codes.

For the generalized Reed-Muller codes, the classes of the polynomials whose
image are the codewords of minimum weight were first described explicity by  
Delsarte,
Goethals and Mac Williams in 1970. This result started a series of 
investigations of the structure of codewords of all weights, not only in 
generalized Reed-Muller codes, but also in related Reed-Muller type codes.  
In the present paper we extend the result of Delsarte, Goethals and Mac Williams 
to  affine cartesian codes, in the case where $K_i$ is a field,  for all $i =1,
\ldots, n$ and $K_1 \subset K_2 \subset \cdots \subset K_n \subset \K$, but 
before we describe the contents of the next sections of this work, we would 
like to present a survey of results that pursued the investigation started by 
Delsarte, Goethals and Mac Williams. 

Reed-Muller codes are binary codes defined by Muller (\cite{muller}) and were 
given a decoding algorithm by Reed (\cite{reed}), in 1954. In 1968 Kasami, Lin 
and Peterson (\cite{klp}) introduced what they called the generalized 
Reed-Muller codes, defined over a finite field $\mathbb{F}_q$ with $q$ 
elements, which coincided with Reed-Muller codes when $q = 2$. Their idea was 
to 
consider the $\mathbb{F}_q$-vector space $\mathbb{F}_q[X_1, \ldots,  X_n]_{\leq 
d}$ 
of all 
polynomials in 
$\mathbb{F}_q[X_1, \ldots,  X_n]$ of  degree less or equal than $d$, together 
with the zero polynomial, for some 
positive integer $d$, and define the generalized Reed-Muler code of order $d$ 
as 
\[
GRM_q(d,n) = \{ (f(\balpha_1), \ldots, f(\balpha_{q^n}) \in 
\mathbb{F}_{q}^{q^n} \, | \, f 
\in 
\mathbb{F}_q[X_1, \ldots,  X_n]_{\leq 
d} \}
\]
where $\balpha_1, \ldots, \balpha_{q^n}$ are the points of the affine space 
$\mathbb{A}^n(\mathbb{F}_q)$. Equivalently, using the fact that $I = (X_1^q - 
X_1, \ldots, X_n^q - X_n)$ is the ideal of 
polynomials whose zero set is $\mathbb{A}^n(\mathbb{F}_q)$, we have that
$GRM_q(d,n)$ is the image of the linear transformation 
$\Psi: \K[X_1, \ldots, X_n]/I \rightarrow
\K^{q^n}$  given by $P + I_{\mathcal{X}} \mapsto ( P(\balpha_1), \ldots, 
P(\balpha_{q^n}) )$.
Kasami et al.\ proved that if $d \geq n (q - 1)$ then  we have 
$GRM_q(d,n) = \mathbb{F}_{q}^{q^n}$ hence the minimum distance 
 $\delta_{GRM_q(d,n)}$ 
of $GRM_q(d,n)$ is 1. For $1 \leq d < n(q - 1)$ write $d = k (q - 1) + \ell$ 
with 
$0 < \ell \leq q - 1$, then  $\delta_{GRM_q(d,n)} = (q - \ell) q^{n - k - 1}$ 
(see 
\cite[Thm. 5]{klp}). McEliece, studying quadratic forms defined over 
$\mathbb{F}_q$ (see \cite{mceliece}) described the so-called weight enumerator 
polynomial for $GRM_q(2,n)$, i.e. described all possible 
weights for the codewords in  $GRM_q(2,n)$, together with the number of 
codewords 
of each 
weight, and also gave canonical forms for the polynomials whose classes  
produced codewords of all  weights.

In 1970 Delsarte, Goethals and Mac Williams published a 
40 pages seminal paper which started the systematic study of the generalized 
Reed-Muller codes  
and 
other codes related to them. Among the many important results in the paper, 
there is a 
description of the polynomials whose evaluation yields the codewords with 
minimum 
distance. To state their result, we recall that the affine group of 
automorphisms of $\mathbb{F}_q[X_1, \ldots, X_n]$ is the one given by 
transformations of the type $\boldsymbol{X}^t \mapsto A \boldsymbol{X}^t + 
\boldsymbol{\beta}$, where $\boldsymbol{X} = (X_1, \ldots, X_n)$,  $A$ is a $n 
\times 
n$ invertible matrix with entries 
in $\mathbb{F}_q$ and $\boldsymbol{\beta} \in \mathbb{F}_q^n$.  

\begin{theorem}\cite[Theorem 2.6.3]{dgm}
The minimal weight codewords of $GRM_q(d,n)$ come from the evaluation  of 
$\Psi$ in classes $f + I$ of polynomials $f$ which, after a suitable action of 
an affine automorphism of  $\mathbb{F}_q[X_1, \ldots, X_n]$, may be written as
\[
f = \alpha  \prod_{i = 1}^k (X_i^{q - 1} - 1) \prod_{i = 1}^\ell (X_{k + 1} - 
\beta_j)
\]
where $d = k (q - 1) + \ell$ with 
$0 < \ell \leq q - 1$, $\alpha \in \mathbb{F}_q^*$ and $\beta_1, \ldots, 
\beta_\ell$ are distinct elements of $\mathbb{F}_q$ (in the case $k = 0$ we 
take the first product to be 1).
\end{theorem}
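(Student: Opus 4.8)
The plan is to prove the two implications separately: that every polynomial of the displayed shape yields a minimal weight codeword (a direct computation), and conversely that every minimal weight codeword arises from such a polynomial after an affine change of coordinates (the substantial part).

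First I would settle the easy direction. An affine automorphism of $\mathbb{F}_q[X_1,\ldots,X_n]$ merely permutes the points of $\mathbb{A}^n(\mathbb{F}_q)$, and multiplication by $\alpha\in\mathbb{F}_q^*$ leaves the support unchanged, so it is enough to compute the weight of the displayed $f$ itself. Since $\gamma^{q-1}-1=0$ precisely when $\gamma\in\mathbb{F}_q^*$, the factor $\prod_{i=1}^{k}(X_i^{q-1}-1)$ is nonzero at a point only when $X_1=\cdots=X_k=0$; the factor $\prod_{j=1}^{\ell}(X_{k+1}-\beta_j)$ is nonzero only when $X_{k+1}$ avoids the $\ell$ distinct values $\beta_1,\ldots,\beta_\ell$, leaving $q-\ell$ choices; and $X_{k+2},\ldots,X_n$ are unconstrained. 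Hence the support has size $(q-\ell)q^{n-k-1}$, which is exactly $\delta_{GRM_q(d,n)}$ by the Kasami--Lin--Peterson formula, so the resulting codeword has minimal weight.

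For the converse I would work with the reduced representative of $f$ modulo $I=(X_1^q-X_1,\ldots,X_n^q-X_n)$, so that $\deg_{X_i}f\le q-1$ for every $i$, and induct on $n$. When $n=1$ one necessarily has $k=0$ and $\ell=d$; a nonzero univariate polynomial of degree $d$ has at most $d$ roots, so a codeword of weight $q-d$ forces $f=\alpha\prod_{j=1}^{d}(X_1-\beta_j)$ with the $\beta_j$ pairwise distinct, which is already in the claimed form. For the inductive step I would choose a good hyperplane direction and slice $\mathbb{A}^n(\mathbb{F}_q)$ into the $q$ parallel affine hyperplanes lying in that pencil; writing the restriction of $f$ to each slice as a codeword of $GRM_q(d,n-1)$, the weight of the original codeword is the sum over the $q$ slices of the weights of these restrictions. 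Comparing this sum with the relation $\delta_{GRM_q(d,n-1)}=\delta_{GRM_q(d,n)}/q$ should force the nonzero restrictions to be themselves minimal weight codewords in $n-1$ variables, to which the induction hypothesis applies, producing the factors $X_i^{q-1}-1$ together with the partial pencil of $\ell$ parallel hyperplanes after an affine adjustment in the remaining coordinates. The boundary case $k=n-1$, where the $(n-1)$-variable restrictions fill the whole ambient space, would be treated separately.

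Geometrically, the content of the converse is that a polynomial of degree $d$ whose zero locus in $\mathbb{A}^n(\mathbb{F}_q)$ attains the maximal cardinality $q^n-\delta_{GRM_q(d,n)}$ must split as a product of $d$ hyperplanes grouped into $k+1$ parallel pencils, the first $k$ omitting a single hyperplane each and the last containing exactly $\ell$. I expect the rigidity built into this statement to be the main obstacle: one must exclude the a priori possibility that the extremal number of zeros is realized by hyperplanes in general position, or by a component of degree greater than one, and only then deduce that the directions coincide and the pencil sizes are forced. The delicate point is a uniqueness statement accompanying the Kasami--Lin--Peterson bound, controlling how many points the $q$ slices can simultaneously avoid; once the common direction of the pencils is identified, the normal form is reached by the affine automorphism carrying that direction onto the coordinates $X_1,\ldots,X_{k+1}$.
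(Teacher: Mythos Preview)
The paper does not prove this theorem directly---it is quoted from Delsarte, Goethals and Mac Williams---but its main Theorem~\ref{pesominimofinal} specialises to it when $K_1=\cdots=K_n=\mathbb{F}_q$, so the relevant comparison is with that proof. Your overall strategy of slicing and induction is in the right spirit, but there is a genuine gap at the step you yourself flag as the ``main obstacle''.

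Your plan for the converse is to slice by a pencil of parallel hyperplanes, deduce via $|f|=\sum_\alpha|f_\alpha|$ and $\delta_{GRM_q(d,n-1)}=\delta_{GRM_q(d,n)}/q$ that each nonzero slice is itself a minimal-weight codeword in $n-1$ variables, apply the induction hypothesis to each slice, and then perform ``an affine adjustment in the remaining coordinates''. The problem is that the induction hypothesis only tells you that each $f_\alpha$ is of the standard form after an affine change \emph{that may depend on $\alpha$}; nothing in your argument forces these changes to be compatible across the slices, so you cannot assemble a single affine automorphism carrying $f$ itself to the required product. This reassembly is precisely where the paper does the hard work: Lemma~\ref{soma1} proves a sharp dichotomy $|g+h|\ge(s+1)\delta$ or $|g+h|=s\delta$ that pins down the zero pattern of each slice, and Lemma~\ref{poligual} then shows that any two minimal-weight slices can be made to literally coincide via one affine map fixing the slicing coordinate. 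Without an analogue of these results your outline does not close. A secondary gap is that you do not say how to ``choose a good hyperplane direction''; the paper's route is instead to prove first (Proposition~\ref{desigualdadeS} and Corollary~\ref{fatores}) that any function of weight below $(1+\tfrac{1}{d_{k+1}})\delta$ must have an $\mathcal{X}$-linear factor, then to count exactly how many parallel such factors occur (Proposition~\ref{minimal}), peel them all off, and induct on $k$ rather than on $n$.
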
     

Since GRM codes arise from the evaluation of polynomials in points of an 
affine space, there is also an algebraic geometry interpretation for the 
codewords. In fact, the above theorem shows that the zeros of a minimal weight 
codeword lie on a special type of hyperplane arrangement. More explicitly, we 
have the following alternative statement (taken from \cite{ballet}) for the 
above result.

\begin{theorem}
Let $V$ be an algebraic 
hypersurface in  $\mathbb{A}^n(\mathbb{F}_q)$, of degree at most $d$, with 
$1 \leq d < n(q - 1)$, which is not the whole $\mathbb{A}^n(\mathbb{F}_q)$. 
Then $V$ has the maximal possible number of	 zeros if and only if 
\[
V = \left( \bigcup_{i = 1}^k \left( \bigcup_{s = 1}^{q - 1} V_{i,s} \right) 
\right) \cup
\left( 
\bigcup_{j = 1}^\ell W_j \right)
\]
where $d = k (q - 1) + \ell$ with 
$0 \leq \ell < q - 1$, the $V_{i,s}$ and  $W_j$ are $d$ distinct hyperplanes 
defined on $\mathbb{F}_q$ such
that for each fixed $i$ the $V_{i,s}$ are $q-1$  parallel hyperplanes, the
$W_j$ are $\ell$ parallel hyperplanes
and the $k + 1$ distinct linear forms directing these hyperplanes are linearly 
independent.
\end{theorem}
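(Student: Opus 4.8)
The plan is to read the statement as the geometric translation of the Delsarte--Goethals--Mac Williams theorem quoted above and to transfer that theorem through the dictionary between hypersurfaces and codewords. First I would fix this dictionary: a hypersurface $V \subset \mathbb{A}^n(\mathbb{F}_q)$ of degree at most $d$ is the zero set $Z(f)$ of a polynomial $f \in \mathbb{F}_q[X_1,\ldots,X_n]$ with $\deg f \leq d$, and since $\mathbb{A}^n(\mathbb{F}_q)$ has exactly $q^n$ points, the number $N$ of $\mathbb{F}_q$-rational points of $V$ equals $q^n$ minus the weight of the codeword $\Psi(f + I)$. The hypothesis $V \neq \mathbb{A}^n(\mathbb{F}_q)$ says precisely that $\Psi(f+I) \neq 0$. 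Hence ``$V$ has the maximal possible number of zeros'' is equivalent to ``$\Psi(f+I)$ is a nonzero codeword of minimal weight'', that is, a minimum distance codeword of $GRM_q(d,n)$.

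Granting this, the forward implication follows by invoking the previous theorem: such an $f$ can be brought, by an affine automorphism $\boldsymbol{X}^t \mapsto A\boldsymbol{X}^t + \boldsymbol{\beta}$, to the normal form
\[
g = \alpha \prod_{i=1}^{k'} (X_i^{q-1} - 1) \prod_{j=1}^{\ell'} (X_{k'+1} - \beta_j) .
\]
Using $X_i^{q-1} - 1 = \prod_{s \in \mathbb{F}_q^*}(X_i - s)$, the locus $Z(g)$ is the union, over each $i \leq k'$, of the $q-1$ parallel hyperplanes $\{X_i = s\}$ with $s \neq 0$, together with the $\ell'$ parallel hyperplanes $\{X_{k'+1} = \beta_j\}$; the directing forms $X_1,\ldots,X_{k'},X_{k'+1}$ are linearly independent. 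Now any affine isomorphism of $\mathbb{A}^n(\mathbb{F}_q)$ maps hyperplanes to hyperplanes and, its linear part being invertible, carries a family of parallel hyperplanes to a parallel family and a linearly independent set of directing forms to a linearly independent one. Applying the affine point map $\boldsymbol{x} \mapsto A\boldsymbol{x} + \boldsymbol{\beta}$, which sends $Z(g)$ bijectively onto $V$, therefore produces the asserted description of $V$. For the converse one runs the argument backwards: a hypersurface of the displayed shape is $Z(f)$ with $f$ the product of the $d$ linear forms cutting out its hyperplanes, and the affine change of variables straightening the $k+1$ independent directions onto $X_1,\ldots,X_{k+1}$ turns $f$ into a polynomial of type $g$; a direct count then shows that $f$ is nonzero at exactly $(q-\ell)q^{n-k-1}$ points of $\mathbb{A}^n(\mathbb{F}_q)$, which is the minimum distance of $GRM_q(d,n)$, so $\Psi(f+I)$ has minimal weight and $V$ has the maximal number of zeros.

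The one point requiring genuine care is reconciling the two decompositions of $d$: the quoted theorem writes $d = k'(q-1) + \ell'$ with $0 < \ell' \leq q-1$, whereas the present statement requires $0 \leq \ell < q-1$. These agree after a single relabelling, which I would record explicitly. Indeed, when $\ell' = q-1$ the factor $\prod_{j=1}^{q-1}(X_{k'+1} - \beta_j)$ is a complete family of $q-1$ distinct parallel hyperplanes, hence exactly a block $\bigcup_{s=1}^{q-1} V_{i,s}$; absorbing it into the first union replaces $(k',\ell') = (k',q-1)$ by $(k,\ell) = (k'+1,0)$, and conversely the case $\ell = 0$ is read off with $\ell' = q-1$. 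After this normalization I would check that the total hyperplane count is $k(q-1) + \ell = d$ and that each block $\bigcup_s V_{i,s}$ consists of $q-1$ of the $q$ hyperplanes in its direction (a parallel family over $\mathbb{F}_q$ having exactly $q$ members). The remaining verifications --- that distinctness of the $\beta_j$ forces the hyperplanes to be distinct, and that $Z(f)$ has degree exactly $d$ --- are immediate from the factorization and from the invertibility of $A$.
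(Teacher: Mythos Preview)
The proposal is correct and matches the paper's treatment: the paper does not give an independent proof of this theorem but presents it (citing \cite{ballet}) as the direct geometric reformulation of the preceding Delsarte--Goethals--Mac Williams theorem, which is precisely the codeword--hypersurface dictionary you carry out, including the relabelling between the two conventions $0<\ell'\le q-1$ and $0\le \ell<q-1$.
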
 

This result was the start of the search for the higher Hamming weights together 
with the description (algebraic and geometric) of the codewords having these 
weights, not only for GRMs but in general for all Reed-Muller type codes, like 
the ones studied in this paper, for the GRMs alone the search is still ongoing.

In 1974 Daniel Erickson, a student of  McEliece and Dilworth, 
devoted his Ph.D. thesis to the determination of the 
second lowest Hamming weight, also called next-to-minimal weight, of 
$GRM_q(d,n)$ (see \cite{erickson}). He succeeded in determining the values of 
the second weight for 
many values of $d$ in the relevant range $1 \leq d < n(q - 1)$. For the values 
that he was not able to determine, following a suggestion by M.\ Hall, he 
generalized some of the results of Bruen on blocking sets, which had appeared 
in 
\cite{bruen1}, and made a conjecture relating the expected value for the 
missing weights to the cardinality of certain blocking sets in the affine plane 
$\mathbb{A}^2(\mathbb{F}_q)$. Also, instead of 
working with the classes of polynomials in  
$\K[X_1, \ldots, X_n]/I$ he worked with a fixed set of representatives  
called ``reduced polynomials'' which he noted that were in a one-to-one 
correspondence with the functions from $\mathbb{F}_q^n$ to $\mathbb{F}_q$. This 
had an influence on  the paper \cite{leducq} and also the present text, as we 
will comment later. Unfortunately Erikson's results were not published, and 
the quest for the next-to-minimal weights of GRM codes went on for many years 
without his contributions.  

In 1976 Kasami, Tokura and Azumi (see \cite{kta}) determined all the weights of 
$GRM_2(d,n)$ (i.e. 
Reed-Muller codes) which are less 
than  $\dfrac{5}{2}\delta_{GRM(d,2)}$. They also determined canonical forms for the 
representatives of the classes whose evaluation produces codewords of these 
weights, together with the number of such words. In particular, the second 
weight of Reed-Muller codes was determined. After this paper, there was not 
much work done on the problem of determining the higher Hamming weights of 
$GRM_q(d,n)$ during two 
decades. Then, in 1996 Cherdieu and Rolland (see \cite{cherdieu-rolland}) 
determined the second weight of 
$GRM_q(d,n)$ for $d$ in the range $1 
\leq d < q - 1$, provided that $q$ is large enough. They also proved that in 
this case the zeros of codewords having next-to-minimal weight form an specific 
type of hyperplane arrangement which they describe. 
In the following year a  work by Sboui (see \cite{sboui}) proved that 
the result by Cherdieu and Rolland holds when $d \leq q/2$.

In 2008 Geil (see \cite{Geil2} and \cite{GeilErratum}) determined the second 
weight of $GRM_q(d,n)$ 
for 
$2 \leq d \leq q - 1$ and $2 \leq n$. Also, for $d$ in the range $(n - 1)(q - 
1) < d < 
n (q - 1)$, he 
determined the first $d + 1 - (n - 1)(q - 1)$ weights of $GRM_q(d,n)$.
His results completely determine the next-to-minimal weight of $GRM_q(d,2)$, 
since in this case the relevant range for $d$ is $1 \leq d < 2 q$.
Geil's theorems  were obtained using results from Gr\"obner basis theory. In 
2010 Rolland made a more detailed analysis of the weights also using Gr\"obner 
basis theory results, and determined almost all 
next-to-minimal 
weights of $GRM_q(d,n)$ (see \cite{rolland}). In fact, he succeeded in finding 
the next-to-minimal weights  for all values of $d$, in the range $q \leq d < n 
(q - 1)$, that 
can not be written in the form $d = k (q - 1) + 1$. 
Finally, also in 2010, A.\ 
Bruen had his attention directed to Erickson's thesis, and in a note (see 
\cite{bruen2}) observed that Erickson's conjecture was an easy consequence of 
results that he, Bruen, had proved in 1992 and 2006 (see \cite{bruen3} and 
\cite{bruen4}). This finally completed the determination of the next-to-minimal 
weights 
$\delta^{(2)}_{GRM_q(d,n)}$
of $GRM_q(d,n)$, and now we know that for $1 \leq d < n(q - 1)$, writing $d = k 
(q - 1) + \ell$ 
with 
$0 \leq \ell < q - 1$, then  $\delta_{GRM_q(d,n)} = (q - \ell) q^{n - k - 1}$
and $\delta^{(2)}_{GRM_q(d,n)} = \delta_{GRM_q(d,n)} + c q^{n - k - 2}$, where
\[
c = \left\{ \begin{array}{lcl}  
q &     \textrm{ if } & k = n - 1; \\
\ell - 1 & \textrm{ if } & k < n - 1 \textrm{ and } 1 < \ell \leq (q + 
1)/2;            \\
      & \textrm{ or } & k < n - 1 \textrm{ and } \ell = q - 1  \neq 1;         
      \\
q      & \textrm{ if } & k = 0 \textrm{ and } \ell = 1;      \\
q - 1  & \textrm{ if } & q < 4, 0 < k < n - 2, \textrm{ and } \ell = 
1;           
\\
q - 1  & \textrm{ if } & q = 3, 0 < k = n - 2 \textrm{ and } \ell = 1;          
\\
q      & \textrm{ if } & q = 2,    k = n - 2 \textrm{ and } \ell = 1;           
\\
q      & \textrm{ if } & q \geq 4, 0 <  k \leq n - 2 \textrm{ and } \ell = 
1;           \\
\ell - 1 & \textrm{ if } & q \geq 4,  k \leq n - 2 \textrm{ and } (q + 1)/2 < 
\ell.
\end{array}  \right.
\]

In 2012 the 1970's theorem of Delsarte, Goethals and Mac Williams was the 
subject 
of a paper by Leducq (see \cite{leducq}). In their paper, Delsarte et al.\ 
prove the theorem on the minimum distance in an Appendix entitled ``Proof  of  
Theorem 2.6.3.'', which opens with the 
sentence: ``The  authors  hasten  to  point  out  that  it  would  be  very  
desirable  to  find  a 
more  sophisticated  and  shorter  proof.''  Leducq indeed provides a shorter 
and less technical proof, treating the codewords as functions from 
$\mathbb{F}_q^n$ to $\mathbb{F}_q$ and using results from affine geometry. Some 
of these results appear in the appendix of Delsarte et al.\ paper, and were 
also used by Erickson in his work. In the following year, Leducq (see 
\cite{leducq2}) completed the 
work of previous researchers, with Sboui, Cherdieu, Rolland and Ballet among 
them, and 
proved that the next-to-minimal weights are only attained by codewords whose 
set of zeros form certain hyperplane arrangements.  In the same year
 Carvalho (see \cite{carvalho}) extended Geils's results of 2008 to 
affine cartesian codes, also determining a
series of higher Hamming weights for these codes. 

In 2014 a paper by Ballet and Rolland (see \cite{ballet}) presented bounds on 
the third and fourth Hamming weights of $GRM_q(d,n)$ for certain ranges of $d$.	
In the following year Leducq (see \cite{leducq3}), pursuing and developing 
ideas from Erickson's thesis, determined the third weight and characterized the 
third weight words of $GRM_q(d,n)$ for some values of $d$. 
In 2017 Carvalho and Neumann (see \cite{car-neu2017}) extended many of the 
results of Rolland, in \cite{rolland}, to affine cartesian codes. They found 
the second weight of these codes for all values of $d$ which can not be written 
as $d = \sum_{i = 1}^k (d_i - 1) + 1$, and they also prove that the weights 
corresponding to such values of $d$ are attained by codewords whose set of 
zeros are hyperplane arrangements (yet they don't prove that every word 
attaining those next-to-minimal weights comes from hyperplane arrangements).

There is a ``projective version'' of the generalized Reed-Muller codes whose 
parameters have been studied like those of $GRM_q(d,n)$ and to which they are 
related. This version was introduced by Lachaud in 1986 (see \cite{lachaud2}), 
but one can find some examples of it already in \cite{vla-manin}. 

Let $\boldsymbol{\gamma}_1,\ldots, \boldsymbol{\gamma}_N$ be the points of 
$\mathbb{P}^n(\mathbb{F}_q)$, where $N = q^n + 
\cdots + q + 1$. From e.g.\ \cite{idealapp} or 
\cite{mercier-rolland}  we get that the homogeneous ideal $J_q \subset 
\mathbb{F}_q[X_0, \ldots 
, X_n]$ of the polynomials which vanish in all points of 
$\mathbb{P}^n(\mathbb{F}_q)$ is 
generated by $\{ X_j^q X_i - X_i^q X_j\, | 
\, 0 \leq i < j \leq n\}$. We denote by $\mathbb{F}_q[X_0, \ldots , X_n]_d$
 (respectively, $(J_q)_d$) the $\mathbb{F}_q$-vector subspace formed by the 
homogeneous polynomials of degree $d$ (together with the zero polynomial) in  
$\mathbb{F}_q[X_0, \ldots , X_n]$ (respectively, $J_q$).

\begin{definition}
Let $d$ be a positive integer and 
let $\Theta: \mathbb{F}_q[X_0, \ldots , X_n]_d / (J_q)_d\rightarrow 
\mathbb{F}_q^N$ be the 
$\mathbb{F}_q$-linear transformation given by $\Theta( f + (J_q)_d ) = 
(f(\boldsymbol{\gamma}_1) 
\ldots, 
f(\boldsymbol{\gamma}_N) )$, where we write the points of 
$\mathbb{P}^n(\mathbb{F}_q)$ in the 
standard 
notation, i.e.\ the first nonzero entry from the left is equal to 1. The 
projective generalized Reed-Muller code of order $d$, denoted by $PGRM_q(n, 
d)$, is the 
image of $\Theta$.
\end{definition}
It is easy to check that if one chooses another representation for the 
projective points the code thus obtained is equivalent to the code defined 
above. It is also easy to prove that if $d \geq n(q - 1) + 1$ then $\Theta$ is 
an isomorphism, so the relevant range to investigate the parameters of $PGRM$ 
codes is $1 \leq d \leq n(q - 1)$.

Lachaud, in \cite{lachaud2} presents some bounds for $\delta_{PGRM_q(n, d)}$, 
the minimum distance for 
$PGRM_q(n, d)$, and determines the true value in a special case. Serre, in 1989 
(see \cite{serre}), determined the minimum distance of $PGRM_q(n, d)$ when $d < 
q$.  In 1990 Lachaud (see\cite{lachaud}) presents some properties that some 
higher weights of $PGRM_q(n, d)$ must have, when $d \leq q$ and $d \leq n$.

Let $g \in \mathbb{F}_q[X_1, \ldots, X_n]$ be a polynomial of degree $d - 1 \geq 1$ and let $\omega$ be the Hamming weight of $\Phi(g + I)$. Let 
$g^{(h)}$ be the 
homogenization of $g$ with respect to $X_0$, then the degree of $X_0 g^{(h)}$ 
is $d$ 
and the weight of $\Theta(X_0 g^{(h)} + (J_q)_d)$ is $\omega$. In particular
$\delta_{PGRM_q(n, d)} \leq \delta_{GRM_q(n, d - 1)}$. When $d = 1$ all the 
codewords of $PGRM_q(n, d)$ have the same number of zeros entries (hence the 
same weight), which is equal to the number of points of a hyperplane in 
$\mathbb{P}^n(\mathbb{F}_q)$, this also implies that for $d = 1$ there are no 
higher Hamming weights. In 1991 S{\o}rensen (see \cite{sorensen}) proved that 
$\delta_{PGRM_q(n, d)} = \delta_{GRM_q(n, d - 1)}$ holds for all $d$ in the 
relevant range. After this paper, similarly to what had happened with GRM 
codes, the subject lay dormant for almost two decades. Then, in 2007 Rodier and 
Sboui (see \cite{rod-sboui}), under the condition $d(d-1)/2 < q$  determined a 
Hamming weight of $PGRM_q(n, d)$, which is not the minimal and is only achieved 
by codewords whose zeros are hyperplane arrangements. In 2008 the same authors 
(see \cite{rod-sboui2}) proved that for $q/2 + 5/2 \leq d < q$ 
the third weight of PGRM is not only achieved by evaluating $\Theta$ in the 
classes of totally 
decomposable 
polynomials but can also be obtained in this case from classes of some 
polynomials 
having an irreducible quadric as a factor. Also in 2008, Rolland (see 
\cite{rolland2}) proved the 
equivalent of Delsarte, Goethals and Mac Williams theorem for PGRM codes, 
completely characterizing the codewords of $PGRM_q(n, d)$ which have minimal 
weights, and proving that they only arise as images by $\Theta$ of classes of 
totally 
decomposable polynomials, which in a sense may be thought of as the 
homogenization of the polynomials described by Delsarte et al. In 2009 
Sboui (\cite{sboui2}) determined the second and third weights of 
$PGRM_q(n, d)$ in the range $5 \leq d \leq q/3 + 2$. He proved that codewords 
which have these weights come only from evaluation of classes of totally 
decomposable polynomials and calculated the number of codewords having   
weights equal to  the minimal distance, or  the second weight, or the third weight.
In the already mentioned paper of 2014 (see \cite{ballet}), Ballet and Rolland 
we find another proof of Rolland's result on minimal weight codewords of PGRM. 
They also present lower and upper bounds for the second weight of $PGRM_q(n, 
d)$.

Putting together the reasoning presented in the beginning of the preceding 
paragraph and S{\o}rensen's result $\delta_{PGRM_q(n, d)} = \delta_{GRM_q(n, d 
- 1)}$, and writing $\delta^{(2)}_{PGRM_q(n, d)}$ for the second Hamming weight 
of $PGRM_q(n, d)$, we get $\delta^{(2)}_{PGRM_q(n, d)} \leq 
\delta^{(2)}_{GRM_q(n, d - 1)}$ 
for 
all $2 \leq d \leq n(q - 1) + 1$. In 2016 Carvalho and Neumann (see 
\cite{car-neu2016}) determined the 
second weight of $PGRM_2(n, d)$ for all $d$ in the relevant range, and in 2018 
(see \cite{car-neu2018}) they also determined the second weight of $PGRM_q(n, 
d)$, for $q \geq 3$ and 
almost all values of $d$. For some values of $d$, in both papers, it happened 
that $\delta^{(2)}_{PGRM_q(n, d)} < \delta^{(2)}_{GRM_q(n, d - 1)}$, and they 
proved 
that in all these cases the zeros of the codewords with weight 
$\delta^{(2)}_{PGRM_q(n, d)}$ are not hyperplane arrangements. 
They also observed that, writing $d - 1 = k (q - 1) + \ell$, with $0 \leq k 
\leq n - 1$ and $0 < \ell \leq  q - 1$, in the case where $q = 3$, $k > 0$  and 
$\ell = 1$ we have $\delta^{(2)}_{PGRM_q(n, d)} = \delta^{(2)}_{GRM_q(n, d - 
1)}$ and there are codewords of weight $\delta^{(2)}_{PGRM_q(n, d)}$ whose 
set of zeros are hyperplane arrangements and others which do not have this 
property. The tables below show the current results for 
$\delta^{(2)}_{PGRM_q(n, d)}$, where we write $d - 1 = k (q - 1) + \ell$ as 
above. The tables also present the values of 
$\delta^{(2)}_{GRM_q(n, d - 1)}$ so the reader can see the cases where one has 
 $\delta^{(2)}_{PGRM_q(n, d)} < \delta^{(2)}_{GRM_q(n, d - 1)}$.

\begin{table}[htbp]
	\centering
	\begin{tabular}{ccccc}\hline\hline
		$n$ & $k$   & $\ell$ 
		& $\delta^{(2)}_{GRM_2(n, d - 
		1)}$ & $\delta^{(2)}_{PGRM_2(n, d - 
		1)}$ 
		 \\
		\hline\hline
		$n \ge 3 $    & $k=0$ & $\ell =1$
		&	$2^n$	 & $3 \cdot 2^{n-2}$  \\
		$n\ge 4$ & $1 \le k < n-2$ & $\ell =1$
		&	$3\cdot 2^{n-k-2}$	 & $3 \cdot 2^{n-k-2}$\\
		$n\ge 2$ & $k=n-2$ & $\ell =1$
		&	$4$	 & $4$  \\
        $n\ge 2$ & $k=n-1$ & $\ell =1$
        		&	$2$	 & $2$  \\
		\hline\hline
	\end{tabular}
	\caption{Second (or next-to-minimal) weights for $GRM_q(n, d)$ and 
	$PGRM_q(n, d)$ when  
	$n\geq 2$ and $q=2$}
	\label{pesoq2}
\end{table}
\begin{table}[htbp]
	\centering
	\begin{tabular}{ccccc}\hline\hline
		$n$ & $k$   & $\ell$ 
		& $\delta^{(2)}_{GRM_q(n, d - 
				1)}$ & $\delta^{(2)}_{PGRM_q(n, d - 
				1)}$ \\
		\hline\hline
		$n=2$ & $k=0$ & $\ell =1$
		&	$3^2$	 & $3^2$   \\
		$n \ge 3$ & $k=0$ & $\ell =1$
		&	$3^n$	 & $8 \cdot 3^{n-2}$   \\
		$n \ge 3$ & $1 \leq k \leq n-2$ & $\ell =1$
		&	$8\cdot 3^{n-k-2}$	 & $8\cdot 3^{n-k-2}$ \\
		$n \ge 2$ &  $0 \leq k \leq n-2$ & $\ell =2$
		&	$4\cdot 3^{n-k-2}$	 & $4\cdot 3^{n-k-2}$ \\
		$n \ge 1$ &  $k=n-1$ & $\ell =1, 2$
		&	$4 - \ell$	 &  $4 - \ell$ \\
		\hline\hline
	\end{tabular}
	\caption{Second (or next-to-minimal) weights for $GRM_q(n, d)$ and 
	$PGRM_q(n, d)$ when  
	$n\geq 1$ and $q=3$}
	\label{pesoq3}
\end{table}
\begin{table}[htbp]
	\centering
	\begin{tabular}{ccccc}\hline\hline
		$n$ & $k$   & $\ell$ 
		& $\delta^{(2)}_{GRM_q(n, d - 
						1)}$ & $\delta^{(2)}_{PGRM_q(n, d - 
						1)}$ \\
		\hline\hline
		$n=2$ & $k=0$ & $\ell =1$
		&	$q^2$	 & $q^2$ \\
		$n\geq 3$ & $k < n-2$ & $\ell =1$
		&	$q^{n-k}$	 & $q^{n-k} - q^{n-k-2}$\\
		$n \geq 3$ & $k = n -2$ & $\ell =1$
		&	$q^2$	 & Unknown \\
		$n \geq 2$ & $k \le n -2$ & $1 < \ell \le \frac{q+1}{2}$
		&	$(q-1)(q-\ell 
		+ 1)q^{n-k-2}$	 &  $(q-1)(q-\ell + 1)q^{n-k-2}$ \\
		$n \geq 2$ & $k \le n -2$ & $\frac{q+1}{2} < \ell \leq q -1 $ 
		&	$(q-1)(q-\ell + 1)q^{n-k-2}$	 & Unknown  \\
		$n \ge 1$ & $k = n -1$ & $1 \leq \ell \leq q-1$
		&	$q-\ell + 1$	 &   $q-\ell + 1$ \\
		\hline\hline
	\end{tabular}
	\caption{Second (or next-to-minimal) weights for $GRM_q(n, d)$ and 
	$PGRM_q(n, d)$ when  
	$n\geq 1$ and $q \geq 4$}
	\label{pesonovo}
\end{table} 
\noindent
A generalization of PGRM codes was introduced in 2017 by  Carvaho, Neumann and 
L\'{o}pez (see 
\cite{car-neu-lop}), as the class of codes called ``projective nested 
cartesian codes''. They determined the dimension of these codes, bounds for the 
minimum distance and the exact value of this distance in some cases.

In the present paper we extend Delsarte, Goethals and Mac Williams theorem to 
the class of affine cartesian codes $\mathcal{C}_\mathcal{X}(d)$  defined 
above, in the case where the sets 
$K_1 \subset \cdots \subset K_n$ are subfields of $\mathbb{F}_q^n$. 
Our main results are Proposition \ref{minimalcodewords1} , Proposition 
\ref{pesominimo} and Theorem 
\ref{pesominimofinal} which show that, as in the GRM 
codes, the minimal weight codewords of $\mathcal{C}_\mathcal{X}(d)$ come from 
the evaluation  of 
$\Psi$ in classes $f + I$ of polynomials $f$ which, after a suitable action of 
an  automorphism group, may be written as the product of certain degree 
one polynomials. In the next section we introduce the concept of code as an 
$\mathbb{F}_q$-vector space of functions (following \cite{erickson} and 
\cite{leducq}) and define the relevant automorphism group for the main result.
We then study
the intersection of certain affine subspaces of 
$\K^n$ with $\X$ to find information on the structure of functions that 
have ``few'' points in the support (see Corollary \ref{fatores}). Then, in 
the beginning of 
Section 3, we use these results to determine the structure of the functions (or 
codewords) of minimal weight, for $d$ within a certain range -- in a sense, for 
the lower values of $d$ (see Proposition \ref{minimalcodewords1}). Finally, 
after exploring a little further the properties of the intersection 
of certain  hyperplanes with $\X$, we prove our main result (see Theorem 
\ref{pesominimofinal}) which generalizes the result by 
Delsarte,
Goethals and Mac Williams.

\section{Preliminary results}
Let $\mathcal{C}_\mathcal{X}(d)$ be the affine cartesian code as in Definition 
\ref{def1}. We assume from now on that $K_1, \ldots, K_n$ are fields and 
that $K_1 \subset K_2 \subset \cdots \subset K_n \subset \K$. Recall that 
$| K_i | = d_i$ for $i = 1,\ldots, n$, so $I_\X = (X_1^{d_1} - X_1, \ldots, 
X_n^{d_n} - X_n )$, and observe 
that, since $\Psi$ is an isomorphism, the code $\mathcal{C}_\mathcal{X}(d)$ is
isomorphic to the $\K$-vector space of the classes of polynomials in $\K[X_1,
\ldots, X_n]/I_{\mathcal{X}}$ of degree up to $d$ (together with the zero
class).
It is well known that, given a subset $Y \subset \K^n$, any function $f : Y
\rightarrow \K$ is given by a polynomial $P \in \K[X_1, \ldots, X_n]$ (again, 
this
is a consequence of the fact that given $\balpha \in \K^n$ there exists a 
polynomial
$P_{\balpha} \in \K[X_1, \ldots, X_n]$ such that $P_{\balpha}(\balpha) = 1$ and 
$P_{\balpha}(\boldsymbol{\beta}) = 0$
for any $\boldsymbol{\beta} \in \K^n \setminus \{ \balpha \}$). Denoting by 
$C_\mathcal{X}$ the
$\K$-algebra of functions defined on $\mathcal{X}$ we clearly have an 
isomorphism
$\Phi: \K[X_1, \ldots, X_n]/I_{\mathcal{X}} \rightarrow C_\mathcal{X}$
hence for each function $f \in  C_\mathcal{X}$ there exists a unique polynomial 
$P \in \K[X_1, \ldots, X_n]$ such that the degree of $P$ in the
variable $X_i$ is less than $d_i$ for all $i = 1, \ldots, n$, and $\Phi(P + 
I_{\mathcal{X}}) = f$.
%
%
%
%
%
\begin{definition} We say that 
 $P$ is {\em the reduced polynomial associated to  $f$} and we  define the {\em 
 degree of $f$} as being the degree of $P$.
\end{definition}

We denote by $C_\mathcal{X}(d)$ the $\K$-vector space formed by functions of 
degree up to $d$, together with the zero function. We saw above that 
$C_\mathcal{X}$ is isomorphic to 
$\K[X_1, \ldots, X_n]/I_{\mathcal{X}}$, and hence to $\K^m$, and 
clearly $C_\mathcal{X}(d) \subset C_\mathcal{X}$ is isomorphic to the code 
$\mathcal{C}_\mathcal{X}(d) \subset \K^m$, so from now on we also call 
$C_\mathcal{X}(d)$ the affine cartesian code of order $d$. To study the 
 codewords of minimum weight we define the support of a function $f \in 
C_\mathcal{X}$ as the 
set
$\{ \negrito{\alpha} \in \mathcal{X} \mid f(\negrito{\alpha}) \neq 0 \}$ and we write 
$|f|$ for its cardinality, which, in this approach, is the Hamming weight of $f$. Thus the minimum distance of $C_\mathcal{X}(d)$ is  
$\delta_\mathcal{X} (d) := 
\min \{ |f| \mid  f \in C_\mathcal{X} (d) \text{ and } f \neq 0 \}$.
We denote by  
\[
Z_\mathcal{X}(f) := \{ \balpha \in \mathcal{X} \mid f(\balpha)=0 \}
\]
the set of zeros of $f \in C_\mathcal{X}$, and given functions $g_1, \ldots, 
g_s$ defined on $\K^n$ we denote by  $Z(g_1,\ldots , g_s)$ be the set 
of common zeros, in $\K^n$, of these functions.

We write $\aff(n,\K)$ for the affine group of $\K^n$, i.e.\ the 
transformations 
of
$\K^n$ of the type  $\balpha \longmapsto A \balpha + \boldsymbol{\beta}$, where 
$A \in GL(n,\K)$
and $\boldsymbol{\beta} \in \K^n$. 
\begin{definitions}
The affine group associated to 
$\mathcal{X}$ is
\[
\aff(\mathcal{X}) =
\{ \varphi: \X \rightarrow \X \mid \varphi = \psi_{|_{\X}} \textrm{ with } 
 \psi \in \aff(n,\K) \text{ and }
\psi(\mathcal{X})=\mathcal{X} \}.
\]

We say that $f, \, g \in C_\mathcal{X}$ are $\mathcal{X}$-equivalent if there exists
$\varphi \in \aff(\mathcal{X})$
such that  $f = g \circ \varphi$.

An affine subspace $G \subset \K^n$ of dimension $r$ is said to be 
 $\mathcal{X}$-affine if there exists 
$\psi \in \aff(n,\K)$ and 
$1 \le i_1 < \cdots < i_r \le n$ such that  $\psi(\X) = \X$ and 
$\psi( \langle e_{i_1}, \ldots , e_{i_r}\rangle ) = G$, where we write  
$\{e_1, \ldots, e_n\}$ for the canonical basis of $\K^n$. We denote by 
$x_i$ the coordinate function $x_i (\sum_j a_j e_j) = a_i$ where $\sum_j a_j 
e_j 
\in \mathbb{F}_q^n$  (and by abuse of notation we also denote by $x_i$ its 
restriction to $\X$) for all $i =1, \ldots, n$.
Let $f \in C_\X$ be a  reduced polynomial of degree one,  if 
there exists 
$\varphi \in \aff(\mathcal{X})$ and $ i \in \{1, \ldots, n\}$  such that $x_i 
\circ \varphi = f$ on the points of $\X$ then we say that $f$ is $\X$-linear.
\end{definitions}

%
%
%

\noindent

\noindent
Let $\{i_1, \ldots, i_s\} \subset \{1, \ldots, n\}$ and $j \in \{1, \ldots, 
n\}$,  we define
$ \mathcal{X}_{i_1, \ldots , i_s} := K_{i_1} \times \cdots \times K_{i_s}$,  
and  \\
$\mathcal{X}_{\widehat{j}} := K_{1} \times \cdots \times K_{j-1}  \times 
K_{j+1} 
\times \cdots  \times K_n$.

\begin{definition} \label{f_alpha}
Let $j \in \{1, \ldots, n\}$, 
for every $\alpha \in K_j$ we have an evaluation homomorphism of $\K$-algebras 
given by
$$
\begin{array}{ccl}
C_\mathcal{X}        & \longrightarrow & C_{\mathcal{X}_{\widehat{j}}} \\
f                    & \longmapsto     &
f(x_1,\ldots, x_{j-1},\alpha,x_{j+1}, \ldots , , x_n) =: f_{\alpha}^{(j)}\, .
\end{array}
$$
\end{definition}
%

We now present two results which we will freely use in what follows. The first 
one states the value of the minimum distance of $C_{\mathcal{X}}(d)$.

\begin{theorem}\cite[Thm.\ 3.8]{lopez-villa} \label{min-dist}
The minimum distance $\delta_{\mathcal{X}}(d)$ of $C_{\mathcal{X}}(d)$ is 1,
if $d \geq \sum_{i = 1}^n (d_i - 1)$, and for $1 \leq d < \sum_{i = 1}^n (d_i -
1)$ we have 
	\[\delta_{\mathcal{X}}(d) = (d_{k + 1} - \ell ) \prod_{i = k + 2}^n d_i \]
	where $k$ and $\ell$ are uniquely defined by $d = \sum_{i = 1}^k (d_i - 1) +
\ell$ with $0 < \ell \leq d_{k + 1} - 1$ (if $k + 1 = n$ we understand that
$\prod_{i = k + 2}^n d_i = 1$, and if $d < d_1 - 1$ then we set $k = 0$ and
$\ell = d$).
\end{theorem}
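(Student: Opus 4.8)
The plan is to establish the two inequalities separately: first construct a reduced polynomial of degree $d$ whose weight equals the claimed value (so that $\delta_\X(d)$ is at most that value), and then show by a footprint argument that every nonzero codeword of degree at most $d$ has at least that weight.

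For the construction, assume $1 \le d < \sum_{i=1}^n (d_i - 1)$ and write $d = \sum_{i=1}^k(d_i - 1) + \ell$ as in the statement. For each $i \le k$ I fix an element $\beta_i \in K_i$ and put $g_i = \prod_{\beta \in K_i \setminus \{\beta_i\}}(X_i - \beta)$, a polynomial of degree $d_i - 1$; I also fix distinct $\gamma_1, \ldots, \gamma_\ell \in K_{k+1}$ and put $h = \prod_{j=1}^\ell (X_{k+1} - \gamma_j)$. Then $f = g_1 \cdots g_k \, h$ is already reduced (its degree in $X_i$ is below $d_i$ for every $i$), has total degree $d$, and a direct inspection of its support gives $|f| = (d_{k+1} - \ell)\prod_{i=k+2}^n d_i$: the first $k$ coordinates of a point in the support are forced to equal $\beta_1,\ldots,\beta_k$, the $(k+1)$-th must avoid $\gamma_1,\ldots,\gamma_\ell$, and the remaining coordinates are free. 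For $d \ge \sum_{i=1}^n(d_i - 1)$ the indicator function of a single point of $\X$ is reduced of degree $\sum_{i=1}^n(d_i - 1) \le d$ and has weight $1$, so $\delta_\X(d) = 1$.

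For the lower bound, recall that since the $K_i$ are fields we have $I_\X = (X_1^{d_1} - X_1, \ldots, X_n^{d_n} - X_n)$ and $C_\X \cong \K[X_1,\ldots,X_n]/I_\X \cong \prod_{\balpha \in \X}\K$. Let $f \neq 0$ be reduced of degree at most $d$. Under the last isomorphism the principal ideal generated by $f$ is supported exactly off $Z_\X(f)$, whence $\dim_\K \K[X_1,\ldots,X_n]/(I_\X + (f)) = |Z_\X(f)| = m - |f|$. Fixing a graded monomial order $\prec$, the leading-term ideal of $I_\X + (f)$ contains each $X_i^{d_i}$ and the leading monomial $\operatorname{lm}(f) = X_1^{a_1}\cdots X_n^{a_n}$; since $f$ is reduced and $\prec$ is graded we have $0 \le a_i \le d_i - 1$ and $\sum_i a_i = \deg f \le d$. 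Hence the standard monomials of $I_\X + (f)$ are among the $m$ monomials of the box $0 \le b_i < d_i$ that are not divisible by $\operatorname{lm}(f)$, and counting the latter gives $|Z_\X(f)| \le m - \prod_{i=1}^n (d_i - a_i)$, that is $|f| \ge \prod_{i=1}^n (d_i - a_i)$.

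It remains to minimize $\prod_{i=1}^n (d_i - a_i)$ over the feasible region $\{\, 0 \le a_i \le d_i - 1,\ \sum_i a_i \le d \,\}$, and I expect this discrete optimization to be the main obstacle. The clean route is to observe that each factor, as a function of $a_i$, has concave logarithm, so $\sum_i \log(d_i - a_i)$ is concave and its minimum over the polytope is attained at a vertex; the vertices are integral and have all but at most one coordinate equal to $0$ or to $d_i - 1$. Among these extreme configurations one then checks, using the hypothesis $d_1 \le \cdots \le d_n$, that it is optimal to set $a_i = d_i - 1$ for the smallest indices $i = 1, \ldots, k$ and $a_{k+1} = \ell$ (intuitively, lowering a factor to $1$ removes the most from the product per unit of degree spent precisely when $d_i$ is smallest). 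This yields $\min \prod_i (d_i - a_i) = (d_{k+1} - \ell)\prod_{i=k+2}^n d_i$, matching the construction and completing the proof. The delicate point is exactly this last comparison, since the factors have increasing marginal returns and a naive greedy selection need not be optimal without invoking the concavity above or a carefully arranged exchange argument.
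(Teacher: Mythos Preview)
The paper does not prove this theorem: it is quoted verbatim from \cite{lopez-villa} and used as a black box, together with the companion numerical Lemma~\ref{2.1} (quoted from \cite{carvalho}). So there is no ``paper's own proof'' to compare against here.

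Your argument, however, is essentially the standard footprint proof and is correct. The upper bound construction is exactly the one the paper itself exhibits later (Propositions~\ref{minimalcodewords1} and~\ref{pesominimo}). For the lower bound, your identification $\dim_{\K}\K[\boldsymbol X]/(I_\X+(f))=|Z_\X(f)|$ via the isomorphism $\K[\boldsymbol X]/I_\X\cong\prod_{\balpha\in\X}\K$ is fine, and the footprint inequality $|f|\ge\prod_i(d_i-a_i)$ follows as you wrote. Two small remarks: you do not actually need the $K_i$ to be fields---for arbitrary finite $K_i$ the ideal is still generated by the univariate polynomials $\prod_{\alpha\in K_i}(X_i-\alpha)$, whose leading terms are again $X_i^{d_i}$, so the same counting goes through; and this is precisely the level of generality in \cite{lopez-villa}.

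The one place you rightly flag as delicate is the discrete minimisation of $\prod_i(d_i-a_i)$ over the simplex-in-a-box. This is \emph{exactly} the content of Lemma~\ref{2.1} in the paper (i.e.\ \cite[Lemma~2.1]{carvalho}), so in the paper's logical flow you would simply invoke it. Your concavity argument is a pleasant alternative to the exchange arguments used in \cite{carvalho,car-neu2017}: since the polytope $\{0\le a_i\le d_i-1,\ \sum a_i\le d\}$ has all integral vertices (any vertex is determined by $n$ tight box/simplex constraints, forcing $n-1$ coordinates to be integers and the last to be $d$ minus a sum of integers), the minimum of the concave function $\sum_i\log(d_i-a_i)$ is attained at one of them, and at such a vertex all but at most one coordinate lie in $\{0,d_i-1\}$. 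The remaining comparison---that among vertices the minimiser saturates $a_i=d_i-1$ for the \emph{smallest} $d_i$ first---still needs a short argument; the cleanest is to note that for two indices $i<j$ with $d_i\le d_j$, swapping ``$a_i=d_i-1$, $a_j=c$'' for ``$a_i=c'$, $a_j=d_j-1$'' (adjusting $c,c'$ to keep the degree) never increases the product, which is a two-line check. With that, your proof is complete.
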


The second one is a very useful numerical result, closely related to the above theorem (the link between these two results is explained in \cite{carvalho}). 

\begin{lemma}\cite[Lemma 2.1]{carvalho}\label{2.1}
	Let $ 0 < d_1 \leq \cdots \leq d_n$ and $1 \leq d \leq \sum_{i = 1}^n (d_i -
1)$ be integers. Let $m(a_1, \ldots , a_n) = \prod_{i = 1}^n (d_i - a_i)$, where
$0 \leq a_i < d_i$ is  an integer for all $i = 1,\ldots, n$. Then
	\[ 
	\min \{ m(a_1, \ldots, a_n) \, | \, a_1 + \cdots + a_n \leq d \} =  (d_{k + 1}
- \ell) \prod_{i = k + 2}^n d_i
	\]
	where $k$ and $\ell$ are uniquely defined by $d = \sum_{i =1}^k (d_i -1) + \ell
$,  with $0 < \ell \leq d_{k + 1} - 1$ (if $s < d_1 - 1$ then take $k = 0$ and
	$\ell = d$, if $k + 1 = n$ then we understand that $\prod_{i = k + 2}^n d_i =
1$). 
\end{lemma}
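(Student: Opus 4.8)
The plan is to pass to complementary variables and read the statement as the minimization of a product of bounded positive integers subject to a lower bound on their sum. Setting $b_i := d_i - a_i$, the constraints $0 \le a_i < d_i$ and $a_1 + \cdots + a_n \le d$ become $1 \le b_i \le d_i$ and $b_1 + \cdots + b_n \ge S - d$, where $S := \sum_{i=1}^n d_i$, and $m(a_1,\ldots,a_n) = \prod_{i=1}^n b_i$. So I must show that the minimum of $\prod_{i=1}^n b_i$ over this box-with-a-sum-constraint equals $V := (d_{k+1}-\ell)\prod_{i=k+2}^n d_i$. The first observation is that lowering any $b_i > 1$ strictly decreases the product while preserving feasibility as long as the sum stays $\ge S-d$; hence there is a minimizer with $\sum_i b_i = S-d$, that is, with the budget fully spent ($\sum_i a_i = d$), the only exception being the degenerate case where all $b_i = 1$, which corresponds to the boundary $d = \sum_i (d_i - 1)$ and already yields $V = 1$.

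The second step is a rearrangement lemma: among the minimizers there is one with $b_1 \le b_2 \le \cdots \le b_n$. The key point is that a transposition is always admissible in the sorting direction: if $i < j$ but $b_i > b_j$, then $b_j < b_i \le d_i \le d_j$, so exchanging the values of $b_i$ and $b_j$ respects both caps $b_i \le d_i$ and $b_j \le d_j$, and leaves the sum and the product unchanged. Finitely many such swaps produce a sorted minimizer.

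The third step extracts the shape of the minimizer from local optimality. A minimizer admits no improving unit transfer: for no pair $i \ne j$ with $b_i > 1$ and $b_j < d_j$ can one replace $(b_i, b_j)$ by $(b_i - 1, b_j + 1)$ and strictly decrease the product. Since that operation changes the factor $b_i b_j$ by $b_i - b_j - 1$, optimality forces $b_i > b_j$ whenever $b_i > 1$ and $b_j < d_j$. Combined with $b_1 \le \cdots \le b_n$, this pins down the structure. There is at most one index $p$ with $1 < b_p < d_p$ (two such indices $p < p'$ would force both $b_{p'} > b_p$ and $b_p > b_{p'}$); every index $i < p$ must have $b_i = 1$, since $b_i > 1$ together with the sorted inequality $b_i \le b_p$ and $b_p < d_p$ would give an improving transfer; and every index $i > p$ must have $b_i = d_i$, since $b_p > 1$, $b_i < d_i$ and $b_p \le b_i$ would likewise improve. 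Thus a sorted minimizer has the staircase form $b_i = 1$ for $i \le k$, $b_i = d_i$ for $i \ge k+2$, with a single transitional value $b_{k+1} \in \{1, \ldots, d_{k+1}\}$.

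Finally, the spent budget fixes the last degree of freedom: $d = \sum_{i=1}^k (d_i - 1) + (d_{k+1} - b_{k+1})$, and writing $\ell = d_{k+1} - b_{k+1}$ the normalization $0 < \ell \le d_{k+1} - 1$ forces $k$ to be exactly the integer in the statement and $b_{k+1} = d_{k+1} - \ell$, whence $\prod_i b_i = (d_{k+1} - \ell)\prod_{i=k+2}^n d_i = V$; the boundary values $b_{k+1} \in \{1, d_{k+1}\}$ merge the transitional coordinate into the block of $1$'s or of saturated coordinates and are checked to give the same value. Since this staircase allocation is itself feasible and uses budget exactly $d$, it attains the minimum. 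The delicate point, and the part I expect to demand the most care, is precisely this structural step: the beneficial direction of a unit transfer depends on the current values $b_i, b_j$ rather than on the order of the caps $d_i$, so it is only after rearranging into sorted order that local optimality cleanly forces the $1$'s to precede the transition and the saturated coordinates to follow it.
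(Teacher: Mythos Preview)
The paper does not supply its own proof of this lemma: it is quoted from \cite{carvalho} and invoked as a black box (for instance inside the proof of Lemma~\ref{numericok}), so there is no in-paper argument to compare your attempt against.

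Your argument is correct. The substitution $b_i = d_i - a_i$ is the natural move, and the three steps---tightening to $\sum_i b_i = S - d$, sorting via swaps that respect the caps because $d_1 \le \cdots \le d_n$, and ruling out improving unit transfers---together force any sorted minimizer into the staircase shape $(1,\ldots,1,b_{k+1},d_{k+2},\ldots,d_n)$, after which the budget pins down $b_{k+1}$. The one place worth an extra sentence is the case with no strict transition (every $b_i \in \{1,d_i\}$): once sorted, the hypothesis $d_i \ge 2$ forces the $1$'s to precede the saturated coordinates, the budget identity reads $d = \sum_{i \le K}(d_i-1)$, and matching this to the canonical decomposition gives $k = K-1$, $\ell = d_K - 1$, so the product $\prod_{i>K} d_i$ equals $(d_{k+1}-\ell)\prod_{i\ge k+2}d_i$ as required. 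With that boundary case spelled out, the proof is complete and self-contained.
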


From Theorem \ref{min-dist} we get that the relevant range for $d$ is $1 \leq d 
< \sum_{i = 1}^n (d_i - 1)$ (the case $d = 0$ is trivial and if $d \geq \sum_{i 
= 1}^n (d_i -
1)$ we have $C_\mathcal{X}(d) \cong \K^m$). In what follows we will always 
assume that $1 \leq d < \sum_{i = 1}^n (d_i - 1)$ and will also freely use 
the decomposition $d = \sum_{i =1}^k (d_i -1) + \ell
$,  with $0 < \ell \leq d_{k + 1} - 1$ (and $0 \leq k < n$). In many places we 
consider a nonzero function $g$ 
defined in $\mathcal{X}_{i_1, \ldots , i_s} \subset \K^{s}$
which belongs to $C_{\mathcal{X}_{i_1, \ldots , i_s}}(d)$, 
and we want to 
estimate $|g|$. 
Applying Theorem \ref{min-dist} 
we 
get that $|g| \geq 1$ if $d \geq \sum_{t = 1}^{s} (d_{i_t} - 1)$
while 
if   
$d < \sum_{t = 1}^{s} (d_{i_t} - 1)$ then $|g| \geq 
\delta_{\mathcal{X}_{i_1, \ldots , i_s}}(d)$, and we find 
$\delta_{\mathcal{X}_{i_1, \ldots , i_s}}(d)$ by a proper application of 
the formula in Theorem \ref{min-dist}.
 Since 
$\delta_{\mathcal{X}_{i_1, \ldots , i_s}}(d) = 1$ in the case 
where $d \geq 
\sum_{t = 1}^{s} (d_{i_t} - 1)$, we can always write $| g 
| \geq 
\delta_{\mathcal{X}_{i_1, \ldots , i_s}}(d)$.

%
%

The following result shows that functions which are related by an 
affine transformation have the same degree. 

\begin{lemma}\label{grau}
Let $\varphi \in \aff(\mathcal{X})$ and $f \in C_\mathcal{X}$ with $f \neq 0$, 
then
 $\deg f = \deg ( f \circ \varphi)$.
\end{lemma}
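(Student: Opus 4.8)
The plan is to prove the two inequalities $\deg(f \circ \varphi) \le \deg f$ and $\deg f \le \deg(f \circ \varphi)$, and by an inversion trick it suffices to establish only the first one for an arbitrary $f$ and an arbitrary element of $\aff(\mathcal{X})$. Indeed, if $\varphi = \psi_{|_{\X}}$ with $\psi \in \aff(n,\K)$ and $\psi(\mathcal{X}) = \mathcal{X}$, then $\psi^{-1} \in \aff(n,\K)$ and, since $\psi$ permutes $\mathcal{X}$, we have $\psi^{-1}(\mathcal{X}) = \mathcal{X}$; hence $\varphi^{-1} := \psi^{-1}_{|_{\X}}$ lies in $\aff(\mathcal{X})$. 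Granting $\deg(g \circ \phi) \le \deg g$ for every $g \in C_{\mathcal{X}}$ and every $\phi \in \aff(\mathcal{X})$, I apply it first to $g = f$, $\phi = \varphi$ and then to $g = f \circ \varphi$, $\phi = \varphi^{-1}$, obtaining $\deg f = \deg((f \circ \varphi) \circ \varphi^{-1}) \le \deg(f \circ \varphi) \le \deg f$, which forces equality.

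To prove $\deg(f \circ \varphi) \le \deg f$, write $\psi(\balpha) = A\balpha + \boldsymbol{\beta}$ with $A \in GL(n,\K)$ and $\boldsymbol{\beta} \in \K^n$, and let $P$ be the reduced polynomial associated to $f$, so that $\deg P = \deg f$. For every $\balpha \in \mathcal{X}$ we have $\psi(\balpha) \in \mathcal{X}$, hence $(f \circ \varphi)(\balpha) = f(\psi(\balpha)) = P(\psi(\balpha))$; thus the polynomial $P(A\boldsymbol{X} + \boldsymbol{\beta})$ represents the function $f \circ \varphi$ on $\mathcal{X}$, where $\boldsymbol{X} = (X_1, \ldots, X_n)$. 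Substituting each $X_j$ by the degree-one form $(A\boldsymbol{X} + \boldsymbol{\beta})_j$ turns a monomial of degree $e$ into a product of $e$ linear forms, so that $\deg P(A\boldsymbol{X} + \boldsymbol{\beta}) \le \deg P$.

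Finally I would pass from $P(A\boldsymbol{X} + \boldsymbol{\beta})$ to the reduced polynomial $Q$ of $f \circ \varphi$, which is its normal form modulo $I_{\mathcal{X}} = (X_1^{d_1} - X_1, \ldots, X_n^{d_n} - X_n)$. Since the leading monomials $X_i^{d_i}$ of the generators involve pairwise distinct variables, the generators form a Gr\"obner basis and the normal form is obtained by repeatedly performing the substitution $X_i^{d_i} \mapsto X_i$. Each such substitution lowers the exponent of $X_i$ by $d_i - 1 \ge 1$, hence strictly decreases the total degree; therefore $\deg Q \le \deg P(A\boldsymbol{X} + \boldsymbol{\beta}) \le \deg P = \deg f$, that is, $\deg(f \circ \varphi) \le \deg f$. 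The only point requiring care is precisely this last step, namely ensuring that reduction modulo $I_{\mathcal{X}}$ can never raise the total degree; but this is immediate from the shape of the generators and is where I expect the minor technical subtlety of the argument to reside.
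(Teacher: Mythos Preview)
Your proof is correct and follows essentially the same approach as the paper's: both prove $\deg(f\circ\varphi)\le\deg f$ by substituting the affine map into the reduced polynomial and then reducing modulo $I_{\mathcal{X}}$, and both conclude by applying this inequality to $\varphi^{-1}$. Your version is slightly more explicit in justifying that $\varphi^{-1}\in\aff(\mathcal{X})$ and that reduction modulo the generators $X_i^{d_i}-X_i$ cannot raise the total degree, whereas the paper simply invokes division with respect to a degree-lexicographic order; but the underlying argument is the same.
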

\begin{proof} Since $\varphi \in \aff(\mathcal{X})$ we have that 
$\varphi(\boldsymbol{\alpha}) = A \boldsymbol{\alpha} + \boldsymbol{\beta}$
where $A \in GL(n,\K)$
and $\boldsymbol{\beta} \in \K^n$. Let $P \in \K[X]$ be the reduced polynomial 
associated to $f$, and let's endow $\K[X]$ with a degree-lexicographic order. 
Then the reduced polynomial associated to $f \circ \varphi$ is the remainder, 
say $Q$, in the division of $P(A \boldsymbol{X} + \boldsymbol{\beta})$ by $\{ 
X_1^{d_1} - X_1, \ldots, X_n^{d_n} - X_n \}$, where $\boldsymbol{X}$ 
is a 
column vector with entries equal to $X_1, \ldots, X_n$. Thus $\deg Q \leq \deg 
P(A \boldsymbol{X} + \boldsymbol{\beta}) \leq \deg P$, so that 
$\deg (f \circ \varphi) \leq \deg f$. Applying the argument 
to 
$\varphi^{-1}$ we conclude that $\deg (f \circ \varphi) = \deg 
f$.
\end{proof}

The next result, although simple, is the basis for many 
important results that follow.

\begin{lemma}\label{divide}
Let $f,h \in C_\mathcal{X}$ be nonzero functions.
There exists a function  $g \in C_\mathcal{X}$ such that $f = gh$ if and only 
if 
$Z_\mathcal{X}(h) \subset Z_\mathcal{X}(f)$, i.e.\ 
$h$ is  a factor of $f$ if and only of $f$ vanishes in $Z_\mathcal{X}(h)$.
Moreover, if $h$ is $\mathcal{X}$-linear then  $\deg g = \deg f - 1$.
\end{lemma}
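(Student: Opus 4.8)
The plan is to read off the biconditional from the fact that $C_\mathcal{X}$ is the algebra of all $\K$-valued functions on $\mathcal{X}$, and then to obtain the degree statement by an affine change of coordinates that reduces $h$ to a coordinate function. The forward implication is immediate: if $f = gh$ and $\balpha \in Z_\mathcal{X}(h)$, then $f(\balpha) = g(\balpha)h(\balpha) = 0$, so $Z_\mathcal{X}(h) \subset Z_\mathcal{X}(f)$. For the converse, assuming $Z_\mathcal{X}(h) \subset Z_\mathcal{X}(f)$, I would simply define $g$ pointwise by $g(\balpha) = f(\balpha)/h(\balpha)$ where $h(\balpha) \neq 0$ and $g(\balpha) = 0$ on $Z_\mathcal{X}(h)$; since any function on $\mathcal{X}$ belongs to $C_\mathcal{X}$ (as recalled at the start of Section~2), this $g$ lies in $C_\mathcal{X}$ and satisfies $f = gh$ at every point, the points of $Z_\mathcal{X}(h)$ being handled by the hypothesis that $f$ vanishes there.

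For the degree assertion, note first that $g$ is free to take any values on $Z_\mathcal{X}(h)$, so the claim must be understood as exhibiting one factor $g$ of degree $\deg f - 1$; I will produce the reduced one. Since $h$ is $\mathcal{X}$-linear, there are $\varphi \in \aff(\mathcal{X})$ and an index $i$ with $x_i \circ \varphi = h$ on $\mathcal{X}$. Composing $f = gh$ with $\varphi^{-1}$ gives $f \circ \varphi^{-1} = (g \circ \varphi^{-1})\,x_i$, and by Lemma~\ref{grau} neither degree changes; hence it suffices to treat the model case $h = x_i$, that is $F = G\,x_i$ with $Z_\mathcal{X}(x_i) \subset Z_\mathcal{X}(F)$.

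In this case I would let $P$ be the reduced polynomial of $F$ and write $P = X_i P' + P_0$ with $P_0 = P|_{X_i = 0}$ free of $X_i$. The hypothesis that $F$ vanishes on $\{\alpha_i = 0\} = Z_\mathcal{X}(x_i)$ says that $P_0$ vanishes on $\mathcal{X}_{\widehat{i}}$; as $P_0$ is already reduced in the remaining variables, injectivity of evaluation forces $P_0 = 0$, so $X_i \mid P$. Then $P'$ is reduced and $\deg P = 1 + \deg P'$, so the function attached to $P'$ is a factor of $F$ of degree $\deg F - 1$; transporting back by $\varphi$ and invoking Lemma~\ref{grau} once more yields the desired $g$ with $\deg g = \deg f - 1$. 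The one point requiring care—and the only real obstacle—is this divisibility $X_i \mid P$ of the reduced polynomial, which is exactly where the injectivity of evaluation on reduced polynomials is used; everything else is bookkeeping with the affine transformation.
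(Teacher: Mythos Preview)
Your proof is correct and follows essentially the same approach as the paper: the biconditional is handled by defining $g$ pointwise, and the degree statement is obtained by reducing via an affine transformation to the case $h = x_i$, writing the reduced polynomial of $f$ as $X_i Q + R$, and showing $R = 0$. Your remark that $g$ is not uniquely determined on $Z_\mathcal{X}(h)$, so the claim should be read as the existence of a factor of the right degree, is a clarification the paper leaves implicit; and your appeal to injectivity of evaluation on $\mathcal{X}_{\widehat{i}}$ to kill $P_0$ is just a slightly more abstract phrasing of the paper's explicit point-picking contradiction.
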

\begin{proof}
If $f=gh$  and $h(\negrito{\alpha})=0$ 
then $f(\negrito{\alpha})=0$, for all   $\balpha \in \mathcal{X}$.
Assume now that 
$Z_\mathcal{X}(h) \subset Z_\mathcal{X}(f)$, and let $g: \X \rightarrow 
\mathbb{F}_q$ be defined by $g(\balpha) = 0$ if $\balpha \in Z_\mathcal{X}(h)$, 
and $g(\balpha) = f(\balpha)/h(\balpha)$ if $\balpha \in \X \setminus 
Z_\mathcal{X}(h)$, then clearly $f = g h$ as functions of $C_\mathcal{X}$.

Let's assume now that $h \mid f$ and that $h$ is 
$\mathcal{X}$-linear, so that  
$h \circ \varphi = x_i$  for some $i \in \{ 1, \ldots , n\}$ and 
$\varphi \in \aff(\mathcal{X})$.  Then $f \circ \varphi = (g \circ \varphi)
(h \circ \varphi)$ and since from Lemma \ref{grau} $\deg f = \deg (f \circ 
\varphi)$ we may simply assume that $h = x_i$.
Let $P$ be the reduced  polynomial associated to  $f$ and write  $P = X_i \cdot 
Q + R$, where $Q, R \in \K[X_1, \ldots, X_n]$ 
and $X_i$ does not appear in any monomial of $R$. Observe 
that for any $j \in \{1,
\ldots, n\}$, the degree of $X_j$ in any monomial of $Q$ is 
at most $d_j
- 1$. Let $g$ and $t$ be the functions associated to $Q$ and $R$, respectively,
so $f = x_i g + t$.  We must have $t = 0$, otherwise $t(\balpha) \neq 0$ for
some $\balpha = (\alpha_1, \ldots, \alpha_n) \in \X$, hence taking
$\tilde{\balpha} = (\tilde{\alpha}_1, \ldots, \tilde{\alpha}_n)$, with
$\tilde{\alpha}_j = \alpha_j$ for $j \in \{1, \ldots, n\} \setminus \{ i \}$ and
$\tilde{\alpha}_i = 0$ we get $x_i(\tilde{\balpha}) = 0$ hence
$f(\tilde{\balpha}) = 0 $ but $t(\tilde{\balpha}) \neq 0$, a contradiction.
Since $R$ is the reduced polynomial associated to $t$ we get $R = 0$, and since 
$Q$ is the reduced polynomial of $g$ we get $\deg g = \deg Q = \deg f - 1$.
\end{proof}

\begin{lemma}\label{divide2}
Let $h$ be a nonzero function in $C_\mathcal{X}(d)$
such that for some $i \in \{1, \ldots, n\}$ and some $\varphi \in 
\aff(\mathcal{X})$ we have $h = x_i \circ \varphi$. Then, for $\alpha \in \K$, 
we get that 
$h - \alpha$ is $\mathcal{X}$-linear if and only if  $\alpha \in K_i$.
Moreover, let $f \in C_\mathcal{X}(d)$, $f \neq 0$ and let $\alpha_1, \ldots,
\alpha_s$ be distinct elements of $K_i$ such that $Z_\mathcal{X}(h - \alpha_j)
\subset Z_\mathcal{X}(f)$ for all $j = 1, \ldots, s$, then there exists 
$g \in C_\mathcal{X}(d - s)$ such that 
$f = g \cdot \displaystyle \prod_{j=1}^{s} (h - \alpha_j)$.
\end{lemma}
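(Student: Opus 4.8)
The plan is to prove the two assertions of the statement separately. For the equivalence ``$h - \alpha$ is $\X$-linear $\iff \alpha \in K_i$'', I would first record that since $h = x_i \circ \varphi$ with $\varphi = \psi_{|_\X}$ a bijection of $\X$, the range $h(\X)$ equals $x_i(\X) = K_i$. For the direction $\alpha \in K_i \Rightarrow h - \alpha$ is $\X$-linear: because $K_i$ is a field, hence an additive subgroup of $\K$, the translation $\sigma(\balpha) = \balpha - \alpha e_i$ maps $K_i$ onto $K_i - \alpha = K_i$ and fixes the other coordinates, so $\sigma(\X) = \X$ and $\sigma \in \aff(\X)$. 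Since $x_i \circ \sigma = x_i - \alpha$, we get $h - \alpha = (x_i - \alpha) \circ \varphi = x_i \circ (\sigma \circ \varphi)$ with $\sigma \circ \varphi \in \aff(\X)$, which (together with $\deg(h - \alpha) = 1$) exhibits $h - \alpha$ as $\X$-linear. Conversely, if $h - \alpha$ is $\X$-linear, say $h - \alpha = x_j \circ \varphi'$ with $\varphi' \in \aff(\X)$, then its range is simultaneously $K_i - \alpha$ and $x_j(\X) = K_j$; since $0 \in K_j$, we obtain $0 \in K_i - \alpha$, i.e.\ $\alpha \in K_i$.

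For the factorization statement I would argue by induction on $s$, repeatedly invoking Lemma \ref{divide}; by the part just proved each $h - \alpha_j$ is $\X$-linear (as $\alpha_j \in K_i$), hence nonzero of degree one. The base case $s = 1$ is immediate: from $Z_\X(h - \alpha_1) \subset Z_\X(f)$ and $\X$-linearity of $h - \alpha_1$, Lemma \ref{divide} gives $g_1 \in C_\X$ with $f = g_1(h - \alpha_1)$ and $\deg g_1 = \deg f - 1$. For the inductive step, suppose $f = g_t \prod_{j=1}^t (h - \alpha_j)$ with $\deg g_t = \deg f - t$; to peel off the next factor I must verify $Z_\X(h - \alpha_{t+1}) \subset Z_\X(g_t)$. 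Here distinctness of the $\alpha_j$ is essential: on $Z_\X(h - \alpha_{t+1})$ one has $h = \alpha_{t+1}$, so each earlier factor satisfies $h - \alpha_j = \alpha_{t+1} - \alpha_j \neq 0$ for $j \leq t$, whence $\prod_{j=1}^t (h - \alpha_j)$ is nowhere zero on that set; since $f$ vanishes there (as $Z_\X(h - \alpha_{t+1}) \subset Z_\X(f)$), it follows that $g_t$ does too. Applying Lemma \ref{divide} once more yields $g_{t+1}$ with $g_t = g_{t+1}(h - \alpha_{t+1})$ and $\deg g_{t+1} = \deg g_t - 1 = \deg f - (t+1)$. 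Carrying the induction to $t = s$ produces $g := g_s$ with $f = g \prod_{j=1}^s (h - \alpha_j)$ and $\deg g = \deg f - s \leq d - s$, so $g \in C_\X(d - s)$; note $g \neq 0$ throughout because $f \neq 0$, which keeps the degree bookkeeping of Lemma \ref{divide} legitimate and guarantees $\deg f - s \geq 0$.

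I expect the main obstacle to be the converse direction of the first assertion — correctly identifying the range of an $\X$-linear function as some $K_j$ and exploiting $0 \in K_j$ to force $\alpha \in K_i$ — and, in the factorization, the propagation of the zero-set inclusion $Z_\X(h - \alpha_{t+1}) \subset Z_\X(g_t)$ at each stage, which is exactly where the distinctness hypothesis enters; the remaining steps are routine applications of Lemma \ref{divide}.
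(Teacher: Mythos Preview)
Your proof is correct and follows essentially the same approach as the paper's: the forward implication uses the translation $\balpha \mapsto \balpha - \alpha e_i$ (the paper writes this directly as $\tilde\varphi(\balpha) = \varphi(\balpha) - \alpha e_i$), the converse uses that an $\X$-linear function has range some $K_j \ni 0$, and the factorization is the same step-by-step peeling via Lemma~\ref{divide}, with distinctness of the $\alpha_j$ giving $Z_\X(h-\alpha_{t+1}) \cap Z_\X(h-\alpha_j) = \emptyset$ for $j \leq t$. Your write-up is slightly more detailed in justifying that $\sigma(\X)=\X$ (using that $K_i$ is an additive group) and in tracking degrees and nonvanishing of the intermediate $g_t$, but the underlying argument is the same.
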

%
%
\begin{proof} 
Assume that $\alpha \in K_i$ and consider the affine transformation 
$\tilde{\varphi}: \K^n \rightarrow \K^n$ given by $\tilde{\varphi}(\balpha) = 
\varphi(\balpha) - \alpha e_i$ for all $\balpha \in \K^n$, then one can easily 
check that $\tilde{\varphi} \in \aff(\X)$ and $x_i \circ \tilde{\varphi} = h - 
\alpha$. On the other hand, suppose that $h - \alpha$ is $\X$-linear, then $h - 
\alpha$ must vanish on some point of $\X$. From $h = x_i \circ \varphi$ we get 
that $h(\X) \subset K_i$ so we must have $\alpha \in K_i$.

Since $h - \alpha_1$ is 
$\X$-linear 
and $Z_\mathcal{X}(h - \alpha_1) \subset Z_\mathcal{X}(f)$ then from Lemma 
\ref{divide} we get that $f =  g_1 (h - \alpha_1)$ with $g_1 
\in 
C_\mathcal{X}(d-1)$. If $s = 1$ we're done, if $s \geq 2$ then 
 from $Z_\mathcal{X}(h - \alpha_2) \subset 
Z_\mathcal{X}(f)$ and the fact that $Z_\mathcal{X}(h - \alpha_1) \cap 
Z_\mathcal{X}(h - \alpha_2) = \emptyset$ we get that $Z_\mathcal{X}(h -  
\alpha_2) \subset Z_\mathcal{X}(g_1)$. From the hypothesis 
and Lemma 
\ref{divide} we get that $g_1 = g_2 (h - \alpha_2)$ with $g_2 
\in 
C_\mathcal{X}(d-2)$, this proves the statement in the case where $s = 2$ and  
if $s 
> 2$ the assertion is proved after a finite number of similar 
steps.
\end{proof}

If $G$ is $\mathcal{X}$-affine and there exists
$\psi \in \aff(n,\K)$ and  
$1 \le i_1 < \cdots < i_r \le n$ such that  $\psi(\X) = \X$ and 
$\psi( \langle e_{i_1}, \ldots , e_{i_r}\rangle ) = G$
then 
$\X_G := \mathcal{X}_{i_1, \ldots , i_r}$. 
The following results states an important property of the 
support of functions.

\begin{lemma} \label{suporte}
Let $f \in C_{\mathcal{X}}(d)$ be a nonzero function and let $S$ be its 
support. 
Then for every $\X$-affine subspace $G \subset \K^n$
of  dimension $r$, with $r \in \{1, \ldots, n-1\}$,  either $S \cap G = 
\emptyset$ or
$|S \cap G| \ge \delta_{\mathcal{X}_G}(d)$.
\end{lemma}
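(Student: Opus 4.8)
The plan is to reduce the statement to the minimum distance theorem (Theorem \ref{min-dist}) applied to the smaller cartesian product $\X_G$. By the definition of an $\X$-affine subspace there are $\psi \in \aff(n,\K)$ with $\psi(\X)=\X$ and indices $1 \le i_1 < \cdots < i_r \le n$ such that $\psi(\langle e_{i_1}, \ldots, e_{i_r}\rangle) = G$, and recall that then $\X_G = \X_{i_1,\ldots,i_r}$. The restriction $\varphi := \psi|_{\X}$ lies in $\aff(\X)$, so by Lemma \ref{grau} the function $f \circ \varphi$ again belongs to $C_\X(d)$; moreover, since $\psi$ is a bijection of $\X$ carrying $L := \langle e_{i_1}, \ldots, e_{i_r}\rangle \cap \X$ onto $G \cap \X$, we get $|S \cap G| = |S' \cap L|$, where $S'$ denotes the support of $f \circ \varphi$.

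The next step is to identify $L$ with $\X_G$ and to realize the restriction as a function of degree at most $d$. Because each $K_j$ is a field we have $0 \in K_j$ for all $j$, so $L$ consists exactly of the points of $\X$ whose coordinates outside $\{i_1,\ldots,i_r\}$ vanish; hence $L$ is naturally identified with $\X_{i_1,\ldots,i_r} = \X_G$ by sending $(\gamma_{i_1},\ldots,\gamma_{i_r})$ to the point carrying these entries in the selected slots and $0$ elsewhere. Under this identification the restriction of $f \circ \varphi$ to $L$ is the function $g$ on $\X_G$ obtained from the reduced polynomial $P$ of $f \circ \varphi$ by substituting $X_j = 0$ for every $j \notin \{i_1,\ldots,i_r\}$, that is, by iterating the evaluation homomorphism of Definition \ref{f_alpha} at $\alpha = 0$. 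This substitution merely deletes some monomials of $P$ and keeps the degree in each surviving variable $X_{i_t}$ below $d_{i_t}$, so the resulting polynomial is the reduced polynomial of $g$ and $\deg g \le \deg (f \circ \varphi) = \deg f \le d$. Thus $g \in C_{\X_G}(d)$, and by construction $|g| = |S' \cap L| = |S \cap G|$.

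Finally, suppose $S \cap G \neq \emptyset$. Then $g \neq 0$, and Theorem \ref{min-dist} yields $|g| \ge \delta_{\X_G}(d)$; here one invokes, exactly as in the paragraph preceding Lemma \ref{grau}, the fact that $\delta_{\X_G}(d) = 1$ whenever $d \ge \sum_{t=1}^r (d_{i_t}-1)$, so that the bound $|g| \ge \delta_{\X_G}(d)$ is valid in every range of $d$. Combining this with $|g| = |S \cap G|$ gives $|S \cap G| \ge \delta_{\X_G}(d)$, which is the assertion.

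The argument as a whole is short because the hard analytic content is already packaged in Lemma \ref{grau} and Theorem \ref{min-dist}. The only genuinely delicate point I anticipate is the bookkeeping in the middle paragraph: checking that $G \cap \X$ is correctly identified with $\X_G$ (this is where $0 \in K_j$ is used) and that passing to $L$ via the specialization $X_j \mapsto 0$ neither raises the degree nor destroys reducedness, so that the restricted function legitimately lands in $C_{\X_G}(d)$ with support of the same cardinality as $S \cap G$. Once that identification is made transparent, nothing further should obstruct the proof.
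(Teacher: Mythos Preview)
Your proof is correct and follows essentially the same approach as the paper's own proof: reduce via $\psi$ to the coordinate subspace $\langle e_{i_1},\ldots,e_{i_r}\rangle$, use Lemma \ref{grau} to keep the degree bounded, then specialize $X_j \mapsto 0$ for $j \notin\{i_1,\ldots,i_r\}$ to obtain a nonzero function in $C_{\X_G}(d)$ and apply Theorem \ref{min-dist}. Your write-up is in fact slightly more careful than the paper's in flagging where $0\in K_j$ is used and why the substitution preserves reducedness, but the argument is the same.
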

%
%
\begin{proof}
Since $G$ is an $\X$-affine subspace of dimension $r$ there exists 
an affine transformation $\psi: \K \rightarrow \K$
such that  $\psi(\mathcal{X})=\mathcal{X}$ and $G = \psi(V)$ where 
$V = \langle e_{i_1}, \ldots , e_{i_r} \rangle$. Observe that $\psi$ establishes
a bijection between the points of $V \cap \psi^{-1}(S)$ and $G \cap S$, we also
have that $\psi^{-1}(S)$ is the support of the function $f \circ \psi_{|_{\X}}$
which belongs to $C_{\mathcal{X}}(d)$ because $\deg f = \deg (f \circ
\psi_{|_{\X}})$. This shows that, for simplicity, we may assume that 
$G = 
\langle e_{i_1}, \ldots , e_{i_r} \rangle$. Suppose that $S \cap G \neq
\emptyset$ and let $P$ be the reduced polynomial associated to $f$, then 
  $f$
induces a nonzero function $\tilde{f}$ defined over $\X_G = 
\X_{i_1, \ldots , i_r} \subset \K^r$ 
whose reduced polynomial is $\tilde{P}(X_{i_1}, \ldots, 
X_{i_s})$ obtained from
$P$ by making $X_i = 0$ for all $i \in \{1, \ldots, n\} 
\setminus \{i_1, \ldots , i_s\}$. Clearly $\deg \tilde{f} 
\leq d$ so that $\tilde{f} \in C_{\X_G}(d)$,
also $|S \cap G| = | \tilde{f}|$ and as a consequence of Theorem \ref{min-dist}
we get $| \tilde{f}| \geq \delta_{\mathcal{X}_G}(d)$. 
\end{proof}

Observe, in the next result, that if $S$ is the support of a 
function then, from 
the above result, it already has property (2).

%
%
\begin{proposition} \label{desigualdadeS}
	Let $1 \leq d < \sum_{i = 1}^n (d_i - 1)$ and write 
$d = \sum_{i = 1}^k (d_i - 1) +
\ell$ as in Theorem \ref{min-dist}.	
	Let $S \subset \mathcal{X}$ be a nonempty set and
assume that $S$ has the following properties:
	\begin{enumerate}
		\item \label{propS1} $|S| < 
		\left(
		1 + \dfrac{1}{d_{k+1}}
		\right)
		\delta_\mathcal{X}(d) =
		\left(
		1 + \dfrac{1}{d_{k+1}}
		\right)
		(d_{k+1} - \ell) d_{k+2} \cdots d_n 
		$.
		\item \label{propS2} For every $\X$-affine subspace $G \subset \K^n$
of  dimension $r$, with $r \in \{0, \ldots, 
n-1\}$,  
either $S \cap G = 
\emptyset$ or 
		$|S \cap G| \ge 
		\delta_{\mathcal{X}_G}(d)$.
	\end{enumerate}
	Then there  exists an affine  subspace $H \subset \K^n$, of dimension 
$n-1$ and a transformation  $\psi \in \aff(n,\K)$  such that $\psi(\X) = \X$, 
$\psi(V_{k+1}) = H$ where $V_{k+1}$ is the $\K$-vector space generated by 
$\{e_1, \ldots, e_n\} \setminus \{e_{k + 1} \}$ (so, in particular, $H$ is 
$\X$-affine) and $S \cap H = \emptyset$.
%
\end{proposition}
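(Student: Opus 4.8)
The plan is to induct on the number $n$ of factors, slicing $\X$ by the $d_{k+1}$ hyperplanes $A_\alpha:=\{x_{k+1}=\alpha\}$, $\alpha\in K_{k+1}$. Each $A_\alpha$ is the translate of $V_{k+1}$ by $\alpha e_{k+1}$; since $K_{k+1}$ is a field this translation preserves $\X$, so $A_\alpha=\psi(V_{k+1})$ for a suitable $\psi\in\aff(\X)$, and these are precisely the hyperplanes of the type sought in the conclusion. If some $A_\alpha$ misses $S$ we are done, so assume every $A_\alpha$ meets $S$. Each slice is $\X$-affine with $\X_{A_\alpha}=\X_{\widehat{k+1}}$, and a direct application of Theorem~\ref{min-dist} gives $\delta_{\X_{A_\alpha}}(d)=\delta_{\X_{\widehat{k+1}}}(d)=(d_{k+2}-\ell)\,d_{k+3}\cdots d_n=:\delta'$; hence hypothesis~(\ref{propS2}) forces $|S\cap A_\alpha|\ge\delta'$ for every $\alpha$ and so $|S|\ge d_{k+1}\delta'$.

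First I would settle the \emph{easy} cases by this counting alone. If $k=n-1$ then $\delta'=1$, so $|S|\ge d_n$, which already exceeds $(1+1/d_n)\,\delta_\X(d)=(1+1/d_n)(d_n-\ell)$ and contradicts hypothesis~(\ref{propS1}); thus some $A_\alpha$ is empty. If $k<n-1$ and $d_{k+2}>d_{k+1}$, then the nesting $K_{k+1}\subsetneq K_{k+2}$ of fields of the same characteristic forces $d_{k+2}\ge d_{k+1}^{\,2}$, and a short manipulation yields $d_{k+1}\delta'\ge(1+1/d_{k+1})\,\delta_\X(d)$, again contradicting hypothesis~(\ref{propS1}). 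So in both easy cases some $A_\alpha$ misses $S$.

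The crux is the remaining case $k<n-1$ and $d_{k+1}=d_{k+2}$ (for the classical Reed--Muller codes this is already the generic situation), in which $\delta_\X(d)=d_{k+1}\delta'$ and the counting above is exactly inconclusive. Here I first locate a convenient slice: from $|S|<(1+1/d_{k+1})\,\delta_\X(d)=(d_{k+1}+1)\delta'$ and averaging over the $d_{k+1}$ slices, some $A_{\alpha_0}$ satisfies $|S\cap A_{\alpha_0}|<(1+1/d_{k+2})\delta'$. Viewing $S\cap A_{\alpha_0}$ inside $A_{\alpha_0}\cong\X_{\widehat{k+1}}$ (whose relevant parameter is $d_{k+2}$), it inherits hypotheses~(\ref{propS1}) and~(\ref{propS2}) for $\X_{\widehat{k+1}}$, so the inductive hypothesis furnishes a hyperplane $H'$ of $A_{\alpha_0}$, of the analogous type omitting the $K_{k+2}$-direction, with $S\cap H'=\emptyset$; when $\delta'=1$ this is immediate, since a single point is avoided by a coordinate hyperplane. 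Writing $H'=A_{\alpha_0}\cap H''$ with $H''=\{M=\beta'\}$ an $\X$-affine hyperplane in the $x_{k+2}$-direction, I then sweep the pencil $\{\lambda(x_{k+1}-\alpha_0)+\mu(M-\beta')=0\}$, $[\lambda:\mu]\in\mathbb{P}^1(K_{k+1})$, consisting of the $d_{k+1}+1$ hyperplanes through the codimension-two flat $H'$.

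Because $d_{k+1}=d_{k+2}$, every member of this pencil is $\X$-affine of type $V_{k+1}$; moreover each point of $S$ outside $A_{\alpha_0}$ lies on exactly one member other than $A_{\alpha_0}$, while each point of $S\cap A_{\alpha_0}\subseteq A_{\alpha_0}\setminus H'$ lies on none of them. Thus the $d_{k+1}$ members other than $A_{\alpha_0}$ partition $S\setminus A_{\alpha_0}$, so $\sum|S\cap\tilde H|=|S|-|S\cap A_{\alpha_0}|$. If each of these met $S$, hypothesis~(\ref{propS2}) would give each at least $\delta'$ points, whence $|S|\ge|S\cap A_{\alpha_0}|+d_{k+1}\delta'\ge(d_{k+1}+1)\delta'$, contradicting hypothesis~(\ref{propS1}); therefore one of them is the required empty hyperplane $H$. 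I expect the main obstacle to be the bookkeeping of this sweep: checking that all $d_{k+1}+1$ pencil members are genuinely $\X$-affine of type $V_{k+1}$ (this is exactly where $d_{k+1}=d_{k+2}$, and the resulting freedom to shear and swap these two coordinates inside $\aff(\X)$, is indispensable), that the restricted slice inherits both hypotheses so the induction applies, and that the boundary value $\delta'=1$ is disposed of without circularity.
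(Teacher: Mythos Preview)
Your proposal is correct and follows essentially the same strategy as the paper's proof: induction on $n$, slicing by the $d_{k+1}$ translates of $V_{k+1}$, disposing of the cases $k=n-1$ and $d_{k+1}<d_{k+2}$ by direct counting (the latter via the field-degree inequality $d_{k+2}\ge d_{k+1}^{2}$), and in the remaining case $d_{k+1}=d_{k+2}$ applying the inductive hypothesis to a well-chosen slice and then sweeping the pencil of hyperplanes through the resulting empty codimension-two flat. Your counting in the pencil step---partitioning $S\setminus A_{\alpha_0}$ among the $d_{k+1}$ members other than $A_{\alpha_0}$---is a minor variant of the paper's, which sums $|S\cap\tilde H|$ over all $d_{k+1}+1$ members using that their pairwise intersection avoids $S$; both yield the same contradiction $|S|\ge(d_{k+1}+1)\delta'$.
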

%
%

%
%
\begin{proof}
We proceed by induction on $n$. When $n = 1$ we have $k = 0$, and from the hypothesis we get that  	
 $|S| < (1 + \dfrac{1}{d_1})(d_{1} - \ell) \le d_1 - \dfrac{1}{d_1}$, hence 
 $|S| \leq d_1 - 1$ and $S \subsetneqq K_1 \subset \K$. A $0$-dimensional 
 $\X$-affine subspace is just an element of $K_1$, so it is enough to take $H$ 
 as a point of $K_1 \setminus S$. 

Assume now that the statement is true for all $n < N$, and let $S  \subset \X 
\subset \K^N$ as in the hypothesis. For $\alpha \in K_{k+1}$ let
$$
G_\alpha = \alpha e_{k+1} + V_{k+1} =
\left\{ 
\negrito{\beta} \in \K^N \mid \negrito{\beta} = (\beta_1, 
\ldots, \beta_N) 
\textrm{ and }   \beta_{k+1} = \alpha 
\right\} 
$$

If for some $\alpha \in K_{k+1}$ we get $S \cap G_\alpha = \emptyset$ then we're done,  so assume from now on that $S \cap G_\alpha \neq \emptyset$ for all  
$\alpha \in K_{k+1}$. If $k=N-1$ we have $\delta_\mathcal{X}(d) = d_N - \ell$
and 
\[ 
	d_N \le \sum_{\alpha \in K_N}  |S \cap G_\alpha| = 
	|S| < 
	\left(
	1 + \dfrac{1}{d_N}
	\right) (d_N - \ell)
	\le 
	\left(
	1 + \dfrac{1}{d_N} 
	\right) (d_N - 1)
	= d_N - \dfrac{1}{d_N}\, ,
\]
a contradiction which settles this case.
%
%
%
Now we consider the case where $k \leq N - 2$. 
Since $G_\alpha$ is $\X$-affine we have
$|S \cap G_\alpha| \ge \delta_{\mathcal{X}_{\widehat{k+1}}} (d)
	= (d_{k+2} - \ell)d_{k+3} \cdots d_N$ for every $\alpha \in K_{k + 1}$.
Thus 
$d_{k+1} \delta_{\mathcal{X}_{\widehat{k+1}}} (d)
	\le |S| <
	\left(
	1 + \dfrac{1}{d_{k+1}}
	\right)
	\delta_\mathcal{X}(d)
$
and from the formulas for $\delta_{\mathcal{X}_{\widehat{k+1}}} (d)$ and 
$\delta_\mathcal{X}(d)$ we get $
	d_{k+1} (d_{k+2} - \ell) < 
	\left(
	1 + \dfrac{1}{d_{k+1}}
	\right) (d_{k+1} - \ell) d_{k+2}$.
Hence 
$$
	1 - \dfrac{\ell}{d_{k+2}} < 
	1 - \dfrac{\ell}{d_{k+1}^2} - \dfrac{\ell - 1}{d_{k+1}} \le 
	1 - \dfrac{\ell}{d_{k+1}^2}
$$
so that $d_{k+2} < d_{k+1}^2$. Assume that 
$K_{k+1} \subsetneq K_{k+2}$,  since this is a field extension 
we must have $d_{k+1}^2 \le d_{k+2}$, a contradiction which settles the case 
$k \leq N - 2$ and $d_{k + 1}< d_{k + 2}$. 	

The last case is when $k \leq N - 2$ and $d_{k + 1} = d_{k + 2}$, and now we will apply the induction hypothesis. To do that, for $\alpha \in K_{k+1}$, we 
consider the bijection $\xi_\alpha : G_\alpha \rightarrow \K^{N - 1}$ which 
acts on an $N$-tuple $\balpha \in G_\alpha$ by deleting the $(k+1)$-th entry 
(which is equal to $\alpha$). 
Observe that $\xi_\alpha$ establishes a bijection between affine subspaces of 
$\K^N$ contained in $G_\alpha$ and affine subspaces of $\K^{N - 1}$. 
Clearly $\mathcal{X}_{\widehat{k+1}} \subset \K^{N - 1}$ and we want to show 
that 
$\xi_\alpha(S \cap G_\alpha)$ has  property (2) of the statement (with 
$\X_{\widehat{k+1}}$ in place of $\X$). For this, let $L \subset \K^{N - 1}$ 
be  an 
$r$-dimensional 
$\X_{\widehat{k+1}}$-affine subspace. Then for some $\tilde{\psi} \in \aff(N - 
1, \K)$, 
given by $\tilde{\alpha} \mapsto \tilde{A}  \tilde{\alpha}+ \boldsymbol{\tilde{\beta}}$, with 
$\tilde{A} \in GL(N-1,\K)$
and $\boldsymbol{\tilde{\beta}} \in \K^{N-1}$, we have 
$\tilde{\psi}(\X_{\widehat{k+1}}) = \X_{\widehat{k+1}}$ and $\tilde{\psi}(L) = 
\langle \tilde{e}_{i_1}, \ldots, \tilde{e}_{i_r} \rangle$, where 
$\{\tilde{e}_{1}, \ldots, \tilde{e}_{N-1}\}$ is the canonical basis for 
$\K^{N-1}$.
%
%
We claim that $\xi_\alpha^{-1}(L)$ is an $\X$-affine subspace contained in 
$G_\alpha$ and to see that let $A$ be the matrix obtained from $\tilde{A}$ by 
adding an $N \times 1$ column of zeros as the $(k+1)$-th column, an $1 \times 
N$ line of zeros as the $(k+1)$-th line and changing the 0 at position $(k+1, 
k+1)$ to 1. Let $\boldsymbol{\beta}$ be the $N\times 1$ vector obtained from 
$\boldsymbol{\tilde{\beta}}$ by adding the entry $-\alpha$ at position $k+1$. 
Then, defining $\psi: \K^N \rightarrow \K^N$ by $\boldsymbol{\alpha} \mapsto A 
\boldsymbol{\alpha}  + 
\boldsymbol{\beta}$ we get that $\psi \in \aff(N, \K)$, and it is easy to check 
that $\psi(\X) = \X$ and that $\psi(\xi_\alpha^{-1}(L)) = \langle e_{j_1}, 
\ldots, e_{j_r} \rangle$, with $\{j_1, \ldots, j_r\} \subset \{1, \ldots, n\} 
\setminus \{k + 1\}$, $j_s = i_s$ whenever $i_s < k + 1$, and $j_s = i_s + 1$ 
whenever $i_s \geq k + 1$, for all $s = 1, \ldots, r$, so that $\{d_{j_1}, 
\ldots, d_{j_r}\} = \{d_{i_1}, \ldots, d_{i_r}\}$. To show that 
$\xi_\alpha(S \cap G_\alpha)$ has  property (2) of the statement, with 
$\X_{\widehat{k+1}}$ in place of $\X$, we observe that 
\[ 
|(\xi_\alpha(S \cap G_\alpha) \cap L | = |(S \cap G_\alpha) \cap 
\xi_\alpha^{-1}(L) | = | S \cap \xi_\alpha^{-1}(L)| \geq 
\delta_{\mathcal{X}_{j_1, \ldots, j_r}}(d) = 
\delta_{(\mathcal{X}_{\widehat{k+1}})_{i_1, \ldots, i_r}}(d).
\]
Now we prove that there exists $\alpha \in K_{k+1}$ such that $\xi_\alpha(S 
\cap G_\alpha)$ also has property (1), with $\X_{\widehat{k+1}}$ in place of 
$\X$. Indeed, if for all $\alpha \in K_{k+1}$ we have 	
	$$
	|\xi_\alpha(S \cap G_\alpha)| \ge \left(
			1 + \dfrac{1}{d_{k+2}} 
			\right)\delta_{\mathcal{X}_{\widehat{k+1}}}(d) = 
	\left(
		1 + \dfrac{1}{d_{k+2}}
		\right) (d_{k+2} - \ell) d_{k+3} \cdots d_N 	
	$$
then from $|\xi_\alpha(S \cap G_\alpha)| = |S \cap G_\alpha|$ we get 
$	|S| \ge  d_{k+1}	\left(
		1 + \dfrac{1}{d_{k+2}}
		\right)  (d_{k+2} - \ell) d_{k+3} \cdots d_N 
 =
	\left(
	1 + \dfrac{1}{d_{k+1}}
	\right)
	\delta_\mathcal{X}(d)\, 
	$ 	
(because $d_{k+1}=d_{k+2}$)	which contradicts property (1).
Thus, for some $\alpha \in K_{k+1}$ we get that $\xi_\alpha(S \cap G_\alpha) 
\subset \X_{\widehat{k+1}} \subset \K^{N - 1}$ satisfies properties (1) and 
(2), and from 
the induction hypothesis  there exists an $\X_{\widehat{k+1}}$-affine subspace 
$L \subset 
\K^{N-1}$ of dimension $N - 2$ and $\tilde{\psi} \in \aff(N - 1, \K)$ such 
that  
$\tilde{\psi}(\X_{\widehat{k+1}}) = \X_{\widehat{k+1}}$, 
$\psi(L)$ is the subspace generated by $\{\tilde{e}_1, \ldots,  
\tilde{e}_{N-1}\} \setminus \{\tilde{e}_{k + 1}\}$ and $\xi_\alpha(S \cap 
G_\alpha) \cap L = \emptyset$. From what we did above we get that 
$\xi_\alpha^{-1}(L)$ is an $(N - 2)$-dimensional $\X$-affine subspace of $\K^N$ 
and there exists $\psi \in \aff(N, \K)$ such that  
$\psi(\X) = \X$, $\psi(\xi_\alpha^{-1}(L))$ is the subspace generated by 
$\{{e}_1, \ldots,  {e}_{N}\} \setminus \{e_{k + 1}, e_{k + 2}\}$,  
 and $(S \cap G_\alpha) \cap \xi_\alpha^{-1}(L) = 
S \cap \xi_\alpha^{-1}(L) = \emptyset$. 
Thus $\psi(\xi_\alpha^{-1}(L))$ is the subvector space defined by $X_{k+1} = 0$ and $X_{k+2} = 0$, and
let $G_{(\gamma_1, \gamma_2)}$ be the hyperplane defined by the equation $\gamma_1 X_{k+1} + \gamma_2 X_{k + 2} = 0$, where $(\gamma_1 : \gamma_2) \in \mathbb{P}^1(K_{k + 1})$, observe that $G_{(\gamma_1, \gamma_2)} \cap G_{(\gamma'_1, \gamma'_2)} = \psi(\xi_\alpha^{-1}(L))$ whenever  $(\gamma_1 : \gamma_2) \neq (\gamma'_1 : \gamma'_2)$. One may easily check that for every $(\gamma_1 : \gamma_2) \in \mathbb{P}^1(K_{k + 1})$ there exists a linear transformation that takes $G_{(\gamma_1, \gamma_2)}$ onto the subspace defined by $X_{k+1} = 0$, so that $H_{(\gamma_1, \gamma_2)} := \psi^{-1}(G_{(\gamma_1, \gamma_2)})$ is an $\X$-affine subspace of dimension $N - 1$. We claim that for some $(\gamma_1 : \gamma_2) \in \mathbb{P}^1(K_{k + 1})$ we must have $S \cap H_{(\gamma_1, \gamma_2)} = \emptyset$. Indeed, if this is not true, then, since 
$H_{(\gamma_1, \gamma_2)} \cap H_{(\gamma'_1, \gamma'_2)} = \xi_\alpha^{-1}(L)$
(for any distinct pair $(\gamma_1 : \gamma_2), (\gamma'_1 : \gamma'_2),  \in \mathbb{P}^1(K_{k + 1})$) and $S \cap \xi_\alpha^{-1}(L) = \emptyset$ we get
	$$
	|S| \geq \sum_{(\gamma_1 : \gamma_2) \in \mathbb{P}^1({K_{k+1}})}  |S \cap 
	H_{(\gamma_1, \gamma_2)} | \geq (d_{k+1} + 1) 
	\delta_{\X_{\widehat{k+1}}}(d) = (d_{k+1} + 1) (d_{k+2} - \ell) d_{k+3} 
	\cdots d_N,
	$$
a contradiction with property (1) which, using $d_{k+1} = d_{k + 2}$, states that 
$$
	|S| < \left(
			1 + \dfrac{1}{d_{k+1}}
			\right)
			(d_{k+1} - \ell) d_{k+2} \cdots d_n = (d_{k+1} + 1) (d_{k+2} - \ell) d_{k+3} \cdots d_N,
$$
\end{proof}

The next result combines previous results and 
gives a first 
step in the direction of the main result.

%
%
%
%
\begin{corollary} \label{fatores}
Let $f$ be a nonzero function in $C_{\mathcal{X}}(d)$ such that 
$|f| < \left( 1 + \dfrac{1}{d_{k+1}} \right)
\delta_{\mathcal{X}}(d)$, then $f$ is a multiple of a 
function $h$ of degree 1 which is $\X$-equivalent to $x_{k+1}$.
\end{corollary}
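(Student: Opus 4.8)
The plan is to apply Proposition~\ref{desigualdadeS} to the support $S$ of $f$ and then convert the geometric conclusion it produces into the divisibility statement by means of Lemma~\ref{divide}.

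First I would set $S = \{\balpha \in \X \mid f(\balpha) \neq 0\}$, so that $|S| = |f|$, and verify that $S$ satisfies the two hypotheses of Proposition~\ref{desigualdadeS}. Property~(1) is precisely the assumption $|f| < \left(1 + \frac{1}{d_{k+1}}\right)\delta_\X(d)$. For property~(2), Lemma~\ref{suporte} already guarantees that for every $\X$-affine subspace $G$ of dimension $r \in \{1, \ldots, n-1\}$ one has $S \cap G = \emptyset$ or $|S \cap G| \geq \delta_{\X_G}(d)$; the remaining case $r = 0$ is immediate, since a $0$-dimensional $\X$-affine subspace is a single point, whence $\X_G$ reduces to a one-point set with $\delta_{\X_G}(d) = 1$, and if $S \cap G \neq \emptyset$ then $|S \cap G| = 1 \geq 1$. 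Thus $S$ meets both hypotheses.

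Applying Proposition~\ref{desigualdadeS} yields an $(n-1)$-dimensional $\X$-affine subspace $H$ together with $\psi \in \aff(n,\K)$ such that $\psi(\X) = \X$, $\psi(V_{k+1}) = H$ (where $V_{k+1}$ is generated by $\{e_1, \ldots, e_n\} \setminus \{e_{k+1}\}$, i.e.\ the hyperplane $X_{k+1} = 0$), and $S \cap H = \emptyset$. I would then propose as factor the degree-one function $h := x_{k+1} \circ \varphi$, where $\varphi := \psi^{-1}|_\X$. Since $\psi^{-1} \in \aff(n,\K)$ and $\psi^{-1}(\X) = \X$, we have $\varphi \in \aff(\X)$, so $h$ is $\X$-equivalent to $x_{k+1}$ by definition, and by Lemma~\ref{grau} its degree equals $\deg x_{k+1} = 1$.

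It remains to check $h \mid f$. The crucial identity is $Z_\X(h) = \X \cap H$: indeed $h(\balpha) = x_{k+1}(\psi^{-1}(\balpha))$ vanishes exactly when $\psi^{-1}(\balpha) \in V_{k+1}$, that is, when $\balpha \in \psi(V_{k+1}) = H$. Because $S$ is the support of $f$ and $S \cap H = \emptyset$, every point of $\X \cap H$ lies outside $S$ and hence is a zero of $f$; this gives $Z_\X(h) \subset Z_\X(f)$. Lemma~\ref{divide} then furnishes a function $g$ (with $\deg g = \deg f - 1$) such that $f = g h$, so $f$ is a multiple of the degree-one function $h$, which is $\X$-equivalent to $x_{k+1}$, as desired. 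Essentially all the difficulty is concentrated in Proposition~\ref{desigualdadeS}, which is already available; the only points requiring care here are the verification that $\varphi$ lies in $\aff(\X)$ and the identification $Z_\X(h) = \X \cap H$.
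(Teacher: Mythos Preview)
Your proof is correct and follows essentially the same route as the paper: verify the hypotheses of Proposition~\ref{desigualdadeS} for the support of $f$ (via Lemma~\ref{suporte}), extract the $\X$-affine hyperplane $H = \psi(V_{k+1})$ disjoint from the support, and then use Lemma~\ref{divide} to produce the factor $h = x_{k+1}\circ\psi^{-1}|_\X$. The only cosmetic difference is that the paper first pulls $f$ back by $\psi$ and factors out $x_{k+1}$ there before pushing forward, whereas you work directly with $h$ on the original side; your explicit treatment of the $r=0$ case in property~(2) is a welcome clarification that the paper leaves implicit.
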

%
%
\begin{proof}
Let $S$ be the support of $f$, from the hypothesis we have that $S$ has property (1) in the statement of Proposition \ref{desigualdadeS} and from Lemma 
\ref{suporte} we get that $S$ also has property (2). Thus, there exists 
an affine  subspace $H \subset \K^n$, of dimension 
$n-1$ and a transformation  $\psi \in \aff(n,\K)$  such that $\psi(\X) = \X$,
$\psi(V_{k+1}) = H$ with  
$V_{k+1} = \left\{ 
	 \negrito{\alpha} \in \K^n \mid \alpha_{k+1} = 0 
	\right\}$  and
$S \cap H = \emptyset$.
Hence $\psi^{-1}(S) \cap V_{k+1} = \emptyset$, and noting that 
$\psi^{-1}(S)$ is the support of 
the function $f \circ \psi_{|_{\X}} \in C_\X(d)$
we get that
$Z_\mathcal{X} (x_{k+1}) \subset Z_\mathcal{X} (f \circ \psi_{|_{\X}})$.
From Lemma \ref{divide} there exists
$g \in C_\X(d - 1)$ such that 
$f \circ \psi_{|_{\X}} = g \, x_{k+1}$, hence 
$f  = (g \circ \psi^{-1}_{|_{\X}}) \cdot (x_{k+1} \circ \psi^{-1}_{|_{\X}})$
and we can take $h = x_{k+1} \circ \psi^{-1}_{|_{\X}}$.
\end{proof}
%
%



Recall that we write $d = \sum_{i =1}^k (d_i -1) + \ell
$,  with $0 < \ell \leq d_{k + 1} - 1$ (and $0 \leq k < n$).

%
%
\begin{lemma}\label{desig_dist}  
	Let $f$ be a nonzero function in $C_\mathcal{X}(d)$,  and let $h \in C_{\X}(d)$ 
	be such that $h = x_j \circ \varphi$, where  $j \in \{1, \ldots, n\}$ and 
	$\varphi \in \aff(\X)$.  If  $m$ is the number of $\alpha \in K_j$ such that 
	$Z_\mathcal{X}(h - \alpha) \subset Z_\mathcal{X}(f)$ then $m \leq d$ and
	$|f| \ge (d_j - m ) \delta_{\mathcal{X}_{\widehat{j}}} (d - m)$.
\end{lemma}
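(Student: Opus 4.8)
The plan is to prove the two assertions separately, first establishing the bound $m \leq d$, then the weight inequality. For the bound on $m$, I would argue by contradiction. Suppose there are $m > d$ distinct elements $\alpha_1, \ldots, \alpha_m \in K_j$ with $Z_\mathcal{X}(h - \alpha_i) \subset Z_\mathcal{X}(f)$. Since $h = x_j \circ \varphi$ with $\varphi \in \aff(\X)$, Lemma \ref{divide2} applies: each $h - \alpha_i$ is $\X$-linear (as $\alpha_i \in K_j$), and using that the zero sets $Z_\mathcal{X}(h - \alpha_i)$ are pairwise disjoint, Lemma \ref{divide2} yields a factorization $f = g \cdot \prod_{i=1}^m (h - \alpha_i)$ with $g \in C_\mathcal{X}(d - m)$. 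But $d - m < 0$ forces $g = 0$, hence $f = 0$, contradicting $f \neq 0$. Therefore $m \leq d$.

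For the weight inequality, I would set up the factorization guaranteed by the first part. Let $\alpha_1, \ldots, \alpha_m \in K_j$ be the distinct elements with $Z_\mathcal{X}(h - \alpha_i) \subset Z_\mathcal{X}(f)$, and let $\alpha_{m+1}, \ldots, \alpha_{d_j}$ be the remaining elements of $K_j$ (those for which $Z_\mathcal{X}(h - \alpha_i) \not\subset Z_\mathcal{X}(f)$). Since $h$ takes values in $K_j$ on $\X$, the fibers $G_i := \{\balpha \in \X : h(\balpha) = \alpha_i\}$ partition $\X$. The key observation is that for each $i > m$, the function $f$ does not vanish identically on $G_i \cap Z_\mathcal{X}(h - \alpha_i)$; more precisely, $f$ restricted to the fiber $G_i$ is a nonzero function, so it contributes positively to the support. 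The plan is to lower-bound $|f|$ by summing the support contributions over the $d_j - m$ fibers $G_{m+1}, \ldots, G_{d_j}$ on which $f$ does not vanish.

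The technical heart is estimating the restriction of $f$ to each relevant fiber. Each fiber $G_i$ (for $i > m$) is an $\X$-affine subspace isomorphic to $\X_{\widehat{j}}$, via the composition of $\varphi^{-1}$ with the slicing that fixes the $j$-th coordinate to $\alpha_i$. Pulling back through $\varphi$, the restriction $f|_{G_i}$ corresponds to a nonzero function of the form $(f \circ \varphi)_{\alpha_i}^{(j)}$ on $\X_{\widehat{j}}$, in the notation of Definition \ref{f_alpha}. The degree of this restriction is at most $\deg(f) - m \leq d - m$: indeed, after writing $f \circ \varphi$ (which has the same degree as $f$ by Lemma \ref{grau}) as a multiple of $\prod_{i=1}^m (x_j - \alpha_i)$, each slicing at $x_j = \alpha_i$ for $i > m$ kills these $m$ factors while preserving nonvanishing. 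Hence each such restriction lies in $C_{\X_{\widehat{j}}}(d - m)$ and, being nonzero, has support of size at least $\delta_{\mathcal{X}_{\widehat{j}}}(d - m)$ by Theorem \ref{min-dist}. Summing over the $d_j - m$ fibers gives $|f| \geq (d_j - m)\delta_{\mathcal{X}_{\widehat{j}}}(d - m)$.

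I expect the main obstacle to be the careful bookkeeping of degrees under the slicing operation: specifically, justifying that after factoring out $\prod_{i=1}^m (h - \alpha_i)$ from $f$, the restriction to each remaining fiber $G_i$ genuinely has degree at most $d - m$ and remains nonzero. One must verify that the slicing map $f \mapsto f_{\alpha}^{(j)}$ does not increase degree (clear from Definition \ref{f_alpha}) and that the quotient function $g$ from the factorization $f = g \cdot \prod_{i=1}^m (h - \alpha_i)$, when restricted to $G_i$ with $i > m$, is exactly the restriction of $f$ up to a nonzero scalar factor $\prod_{i'=1}^m (\alpha_i - \alpha_{i'}) \neq 0$. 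This nonvanishing of the scalar, which uses that the $\alpha_i$ are distinct elements of the field $K_j$, is what guarantees the restriction stays nonzero and lets me apply the minimum distance bound on each fiber.
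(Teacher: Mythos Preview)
Your proposal is correct and follows essentially the same route as the paper: reduce via $\varphi$, apply Lemma~\ref{divide2} to factor $f = g\prod_{i=1}^m(x_j-\alpha_i)$ with $g \in C_\mathcal{X}(d-m)$ (which gives $m\le d$ since $g\neq 0$), then bound each nonzero slice $g_\alpha^{(j)}$ below by $\delta_{\mathcal{X}_{\widehat{j}}}(d-m)$ and sum over the $d_j-m$ remaining values of $\alpha$. One cosmetic orientation slip: since $h = x_j\circ\varphi$, the bijection from your fiber $G_i$ to the standard slice $\{x_j=\alpha_i\}$ is $\varphi$ rather than $\varphi^{-1}$, so the transported function should be $(f\circ\varphi^{-1})_{\alpha_i}^{(j)}$; this is harmless because $\varphi^{-1}\in\aff(\X)$ and Lemma~\ref{grau} applies either way.
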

%
%
\begin{proof}
	Let $\tilde{f}= f \circ \varphi^{-1}$, then $\tilde{f} \in C_\X(d)$, $f = 
	\tilde{f} \circ \varphi$ and 
	$\varphi$ 
	establishes a bijection between the sets  $Z_\mathcal{X}(h- \alpha)$ and 
	$Z_\mathcal{X}(x_j - \alpha)$ for all $\alpha \in K_j$, moreover we get that  
	$Z_\mathcal{X}(h- \alpha) \subset Z_\mathcal{X}(f)$ if and only if 
	$Z_\mathcal{X}(x_j- \alpha) \subset Z_\mathcal{X}(\tilde{f})$. This shows that, 
	in the statement, we can take $\varphi$ to be the identity transformation, 
	without loss of generality. Let $\alpha_1, \ldots, \alpha_m$ be the set of 
	elements $\alpha \in K_j$ such that $Z_\mathcal{X}(x_j- \alpha) \subset 
	Z_\mathcal{X}(f)$, from 
	Lemma \ref{divide2} we get that $f = g \cdot \displaystyle \prod_{i=1}^{m} (x_j 
	- \alpha_i)$, with $g \in  C_\mathcal{X}(d-m)$, and in particular $m \leq d$. 
	Observe that for all $\alpha \in K_j \backslash 
	\{\alpha_1 , \ldots , \alpha_m \}$
	we get $g_\alpha^{(j)} \neq 0$, so that
	$$
	|f| = \sum_{\alpha \neq \alpha_i} |g_\alpha^{(j)}|
	\ge (d_j - m ) \delta_{\mathcal{X}_{\widehat{j}}} (d - m) \, .
	$$ 
\end{proof}
%
%

For our purposes it is important to know when a  function 
$f \in C_\mathcal{X}(d)$ has minimal weight, i.e. when $| f | =   
\delta_{\mathcal{X}} (d)$. Taking into account the previous result, and using 
its notation, we 
investigate when $(d_j - m ) \delta_{\mathcal{X}_{\widehat{j}}} (d - m) \geq 
\delta_{\mathcal{X}} (d)$ holds, and under which conditions equality holds.

\begin{lemma}\label{desigparcial}
Let $1 \leq j \leq k+1$. If $d_j > d_{k+1} -\ell$,
for $0 < m < \ell+ (d_j - d_{k+1})$ we have
$$
(d_{j} - m ) \delta_{\mathcal{X}_{\widehat{j}}}
(d - m) > \delta_{\mathcal{X}} (d).
$$
\end{lemma}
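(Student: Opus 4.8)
The plan is to reduce the claimed strict inequality to an elementary one‑variable estimate by computing $\delta_{\mathcal{X}_{\widehat{j}}}(d-m)$ explicitly via Theorem \ref{min-dist} and then comparing with $\delta_{\mathcal{X}}(d)$. Throughout I only use that $d_1\le\cdots\le d_n$ (a consequence of the nested‑field hypothesis), so that after deleting the $j$-th factor the list of cardinalities of $\mathcal{X}_{\widehat{j}}$, namely $d_1,\ldots,d_{j-1},d_{j+1},\ldots,d_n$, is still nondecreasing. First I record the easy consequences of the hypotheses: since $j\le k+1$ we have $d_j\le d_{k+1}$, whence $M:=\ell+d_j-d_{k+1}\le \ell\le d_{k+1}-1<d_j$, so the admissible range $0<m<M$ is nonempty exactly when $d_j>d_{k+1}-\ell$, and every such $m$ satisfies $m<d_j$.

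The heart of the argument is determining the Theorem \ref{min-dist}-decomposition of $d-m$ relative to the reduced list. Writing $d=\sum_{i=1}^k(d_i-1)+\ell$, after filling the $k-1$ slots indexed by $\{1,\dots,k\}\setminus\{j\}$ (for $j\le k$; for $j=k+1$ one fills the first $k$ slots $d_1,\dots,d_k$) the remaining degree is $d_j-1+\ell-m$ (resp.\ $\ell-m$). The key point is that $m<M$ forces this remainder to strictly exceed $d_{k+1}-1$ in the first case, so the slot $d_{k+1}$ fills completely and a positive leftover $\ell':=M-m=\ell+d_j-d_{k+1}-m$ spills into the next available slot; in the second case $M=\ell$, so the leftover is already $\ell-m=M-m$. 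Since one checks $0<\ell'\le d_{k+2}-1$, I conclude that, when $k\le n-2$,
\[
\delta_{\mathcal{X}_{\widehat{j}}}(d-m)=(d_{k+2}-\ell')\prod_{i=k+3}^{n}d_i,\qquad \ell'=\ell+d_j-d_{k+1}-m,
\]
uniformly in $1\le j\le k+1$, whereas when $k=n-1$ there is no slot beyond $d_{k+1}=d_n$: the remainder exceeds the total capacity $\sum_{i\neq j}(d_i-1)$, and hence $\delta_{\mathcal{X}_{\widehat{j}}}(d-m)=1$.

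With these formulas the conclusion is elementary. In the generic case $k\le n-2$, cancelling the common factor $\prod_{i=k+3}^n d_i$ and recalling $\delta_{\mathcal{X}}(d)=(d_{k+1}-\ell)d_{k+2}\prod_{i=k+3}^n d_i$, the desired inequality becomes $(d_j-m)\bigl(d_{k+2}-\ell+d_{k+1}-(d_j-m)\bigr)>(d_{k+1}-\ell)d_{k+2}$. Setting $u:=d_j-m$ this reads $g(u)>(d_{k+1}-\ell)d_{k+2}$ for the concave parabola $g(u)=u(d_{k+1}+d_{k+2}-\ell-u)$. A direct check gives $g(d_{k+1}-\ell)=g(d_{k+2})=(d_{k+1}-\ell)d_{k+2}$, so $g$ strictly exceeds this common value precisely on the open interval $(d_{k+1}-\ell,\,d_{k+2})$; and $u=d_j-m$ lies strictly inside it, since $m<M$ gives $u>d_{k+1}-\ell$ while $m>0$ together with $d_j\le d_{k+2}$ gives $u<d_j\le d_{k+2}$. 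In the remaining case $k=n-1$ the inequality collapses to $d_j-m>d_n-\ell$, i.e.\ again $m<M$, which holds.

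The step I expect to be most delicate is the explicit evaluation of $\delta_{\mathcal{X}_{\widehat{j}}}(d-m)$: one must track carefully that the hypothesis $m<\ell+d_j-d_{k+1}$ is exactly what makes the leftover degree spill past the $d_{k+1}$-slot into the $d_{k+2}$-slot, and one must treat the boundary case $k=n-1$ separately, where the reduced code fills to capacity and its minimum distance drops to $1$. As a consistency check, the non‑strict version $(d_j-m)\,\delta_{\mathcal{X}_{\widehat{j}}}(d-m)\ge\delta_{\mathcal{X}}(d)$ also follows painlessly from Lemma \ref{2.1} by taking $a_j=m$ and an optimal assignment on the other coordinates; the content of the lemma is the strictness, which the quadratic comparison above supplies.
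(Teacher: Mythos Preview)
Your proof is correct and follows the same approach as the paper: compute $\delta_{\mathcal{X}_{\widehat{j}}}(d-m)$ via Theorem~\ref{min-dist} and reduce to $(d_j-m)(d_{k+2}-\ell') > (d_{k+1}-\ell)d_{k+2}$, which the paper dispatches by the equivalent factorization $(d_j-m)(d_{k+2}-\ell') - (d_{k+1}-\ell)d_{k+2} = \ell'(d_{k+2}-d_j+m)>0$ rather than your concave–parabola argument. Two minor remarks: the inequality ``$d_{k+1}-1<d_j$'' in your opening chain can fail when $d_j<d_{k+1}$, but the conclusion $m<d_j$ you need follows directly from $M=\ell+d_j-d_{k+1}\le d_j-1$; and your explicit treatment of the boundary case $k=n-1$ (where $\delta_{\mathcal{X}_{\widehat{j}}}(d-m)=1$) is actually more careful than the paper, which writes $d_{k+2}$ without comment.
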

\begin{proof}
Observe that we may write
$$
d - m = \sum_{i=1,i \neq j}^{k+1} (d_i - 1) + \ell - m + (d_j - d_{k+1})\, ,
$$
and note that 
$\ell - m + d_j - d_{k+1} \leq \ell - m < \ell < d_{k + 1} \leq d_{k + 2}$
so that  $\delta_{\mathcal{X}_{\widehat{j}}} (d - m) = 
(d_{k+2} - (\ell - m + d_j - d_{k+1})) 
\displaystyle
\prod_{i=k+3}^{n} d_i$.
From $\delta_{\mathcal{X}}(d) = (d_{k+1} - \ell) 
\prod_{i=k+2}^{n} d_i$ and 
$$
(d_j - m)	(d_{k+2} - (\ell - m + d_j - d_{k+1})) - (d_{k+1} - \ell)d_{k+2}  =
(\ell - m + d_j - d_{k+1}) 
(d_{k + 2} - d_j + m) > 0
$$
we get 
$$
(d_{j} - m ) \delta_{\mathcal{X}_{\widehat{j}}} (d - m) >
\delta_{\mathcal{X}} (d).
$$
\end{proof}

\begin{lemma}\label{numericoj} 
Let $1 \leq j \leq k$.
For $0 < m < d_{j}$ we have
$(d_{j} - m ) \delta_{\mathcal{X}_{\widehat{j}}}
(d - m)  \geq \delta_{\mathcal{X}} (d)$, with equality if and only if 
$m=d_j - 1$ or 
both $d_j > d_{k+1} - \ell$ and $m=\ell + d_j  - d_{k+1}$.
\end{lemma}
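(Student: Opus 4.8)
The plan is to compute $\delta_{\mathcal{X}_{\widehat{j}}}(d-m)$ explicitly by locating the canonical decomposition of $d-m$ relative to the deleted collection, and then to reduce the inequality to a transparent one-variable factorization. Since $1 \le j \le k$, deleting the $j$-th factor leaves the ordered sizes $d_1, \dots, d_{j-1}, d_{j+1}, \dots, d_n$, in which $d_{k+1}$ now occupies the $k$-th position. First I would write
\[
d - m = \sum_{i=1,\, i \neq j}^{k} (d_i - 1) + \bigl(d_j - 1 + \ell - m\bigr),
\]
so that the remainder term is $\ell' := d_j - 1 + \ell - m$. Because $1 \le m \le d_j - 1$ we have $\ell' \ge \ell \ge 1$, and the analysis splits according to whether $\ell' \le d_{k+1} - 1$ or $\ell' \ge d_{k+1}$; these correspond respectively to $m \ge \ell + d_j - d_{k+1}$ and $m < \ell + d_j - d_{k+1}$.

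In the regime $m < \ell + d_j - d_{k+1}$ there is nothing new to prove: this range is non-empty only when $d_j > d_{k+1} - \ell$, and it is exactly the hypothesis range of Lemma \ref{desigparcial} (applicable here since $j \le k \le k+1$), which already yields the \emph{strict} inequality $(d_j - m)\delta_{\mathcal{X}_{\widehat{j}}}(d-m) > \delta_{\mathcal{X}}(d)$. So no equality occurs here.

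In the complementary regime $\ell + d_j - d_{k+1} \le m \le d_j - 1$ (which, when $d_j \le d_{k+1} - \ell$, is the whole range $0 < m < d_j$) the remainder satisfies $0 < \ell' \le d_{k+1} - 1$, so Theorem \ref{min-dist} applied to $\mathcal{X}_{\widehat{j}}$ gives
\[
\delta_{\mathcal{X}_{\widehat{j}}}(d-m) = \bigl(d_{k+1} - (d_j - 1 + \ell - m)\bigr)\prod_{i=k+2}^{n} d_i .
\]
Substituting this and the formula $\delta_{\mathcal{X}}(d) = (d_{k+1}-\ell)\prod_{i=k+2}^n d_i$, dividing out the common product $\prod_{i=k+2}^n d_i$, and setting $a := d_j - m$ and $b := d_{k+1} - \ell$, the inequality to be shown becomes $a(b + 1 - a) \ge b$, which rearranges to the identity
\[
a(b+1-a) - b = (a-1)(b-a).
\]
In this regime $1 \le a \le d_j - 1$, so $a - 1 \ge 0$, while the lower bound $m \ge \ell + d_j - d_{k+1}$ is precisely equivalent to $a \le b$, so $b - a \ge 0$; hence $(a-1)(b-a) \ge 0$ and the inequality holds.

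Finally I would read off equality from the factorization: $(a-1)(b-a) = 0$ forces $a = 1$, i.e.\ $m = d_j - 1$, or $a = b$, i.e.\ $m = \ell + d_j - d_{k+1}$, the latter being an admissible value ($0 < m < d_j$) precisely when $d_j > d_{k+1} - \ell$. Combined with the strict inequality of the first regime, this reproduces exactly the stated equality cases. The one step demanding care — and the main obstacle — is the bookkeeping in the opening: correctly placing the remainder $\ell'$ among the re-indexed factors of $\mathcal{X}_{\widehat{j}}$ and checking $0 < \ell' \le d_{k+1}-1$ in the relevant regime, so that the minimum-distance formula is applied with the right truncation index; once that is pinned down, the algebra collapses to the single factorization above.
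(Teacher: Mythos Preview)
Your argument is correct and follows essentially the same path as the paper's proof: first invoking Lemma~\ref{desigparcial} to dispose of the range $0 < m < \ell + d_j - d_{k+1}$, then writing $d - m = \sum_{i\neq j}^{k}(d_i-1) + (\ell + d_j - 1 - m)$ to evaluate $\delta_{\mathcal{X}_{\widehat{j}}}(d-m)$ and reducing to a factored quadratic. Your substitution $a = d_j - m$, $b = d_{k+1} - \ell$ is just a relabeling of the paper's factorization $(m - (\ell + d_j - d_{k+1}))(d_j - 1 - m)$, since $(a-1)(b-a)$ equals this product.
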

\begin{proof} By Lemma \ref{desigparcial}, we may consider
$\max\{1, \ell+ (d_j - d_{k+1})\} \leq m \leq d_j - 1$. 
In this case we write
$$
d - m = \sum_{i=1,i\neq j}^{k} (d_i - 1) + \ell + (d_j - 1 - m),
$$
and we observe that  $0 < \ell + d_j - 1 - m \leq d_{k+1} - 1$,
so that $\delta_{\mathcal{X}_{\widehat{j}}} (d - m)  =
(d_{k+1} - (\ell + d_j - 1 - m)) 
\prod_{i=k+2}^{n} d_i$. From 
$$
(d_j - m) (d_{k+1} - (\ell + d_j - 1 - m))
 - (d_{k+1} - \ell)  =
(m - (\ell + d_j  - d_{k+1}) ) 
(d_j -1 - m) \geq 0
$$
we get 
$$
(d_{j} - m ) \delta_{\mathcal{X}_{\widehat{j}}} (d - m) \geq
\delta_{\mathcal{X}} (d)\, ,
$$
with equality if and only if
$m=d_j - 1$ or 
both $\ell + d_j - d_{k+1} > 0$ and  $m=\ell + d_j  - d_{k+1}$.
\end{proof}

\begin{lemma}\label{numericok} 
For $0 < m < d_{k+1}$ we have
$(d_{k+1} - m ) \delta_{\mathcal{X}_{\widehat{k+1}}}
(d - m)  \geq \delta_{\mathcal{X}} (d)$, with equality if and only if 
$m=\ell$ or both $m=d_{k+1}-1$ and $d_k \geq d_{k + 1} - \ell$.
\end{lemma}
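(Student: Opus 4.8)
The plan is to reduce everything to the explicit minimum--distance formula of Theorem \ref{min-dist}, applied to the cartesian code over $\X_{\widehat{k+1}}$, whose factor sizes in increasing order are $d_1\le\cdots\le d_k\le d_{k+2}\le\cdots\le d_n$ (recall $d_k\le d_{k+1}\le d_{k+2}$). Writing $d-m=\sum_{i=1}^k(d_i-1)+(\ell-m)$, I would organize the argument according to how $m$ compares with $\ell$ and with $d_k$, since these two thresholds govern the canonical decomposition of $d-m$ for $\X_{\widehat{k+1}}$. In each regime I compute $\delta_{\X_{\widehat{k+1}}}(d-m)$, multiply by $d_{k+1}-m$, and compare with $\delta_{\X}(d)=(d_{k+1}-\ell)\prod_{i=k+2}^n d_i$ by factoring the difference, exactly in the style of Lemmas \ref{desigparcial} and \ref{numericoj}. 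The inequality ``$\ge$'' alone in fact drops out instantly from Lemma \ref{2.1}: padding an optimal configuration for $\delta_{\X_{\widehat{k+1}}}(d-m)$ with the value $m$ in the $(k+1)$-th slot is admissible for $\delta_{\X}(d)$ and has product $(d_{k+1}-m)\delta_{\X_{\widehat{k+1}}}(d-m)$; the work is entirely in the equality characterization. Throughout I use $d_i\ge 2$ and, from Lemma \ref{desig_dist}, that only $m\le d$ is relevant, so $1\le m-\ell\le\sum_{i=1}^k(d_i-1)$ once $m>\ell$ (in particular $k\ge 1$ there). When $k=n-1$ the ``tail'' products $\prod_{i=k+2}^n d_i$ are empty and the formulas degenerate (e.g.\ $\delta_{\X_{\widehat{k+1}}}(d-m)=1$ for $m\le\ell$); these degenerate cases give the same conclusions directly, so below I describe $k\le n-2$.

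For $0<m<\ell$ the decomposition keeps $k$ factors with remainder $\ell-m$, giving $\delta_{\X_{\widehat{k+1}}}(d-m)=(d_{k+2}-(\ell-m))\prod_{i=k+3}^n d_i$, and the difference $(d_{k+1}-m)\delta_{\X_{\widehat{k+1}}}(d-m)-\delta_{\X}(d)$ factors as $(\ell-m)(d_{k+2}-d_{k+1}+m)\prod_{i=k+3}^n d_i$, which is strictly positive. For $m=\ell$ one has $d-m=\sum_{i=1}^k(d_i-1)$, so $\delta_{\X_{\widehat{k+1}}}(d-\ell)=\prod_{i=k+2}^n d_i$ and the product equals $(d_{k+1}-\ell)\prod_{i=k+2}^n d_i=\delta_{\X}(d)$; this gives the first equality case.

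When $m>\ell$ set $t=m-\ell\ge 1$. If $t\le d_k-1$ the decomposition drops to index $k-1$ and $\delta_{\X_{\widehat{k+1}}}(d-m)=(1+t)\prod_{i=k+2}^n d_i$; the difference factors as $(m-\ell)(d_{k+1}-m-1)\prod_{i=k+2}^n d_i\ge 0$, vanishing exactly when $m=d_{k+1}-1$. Since $m=d_{k+1}-1$ satisfies $t\le d_k-1$ precisely when $d_k\ge d_{k+1}-\ell$, this yields the second equality case and no other equality in this regime.

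The genuinely delicate regime, and the main obstacle, is $t\ge d_k$. Because $K_k\subseteq K_{k+1}$ is a field extension, this forces $d_k<d_{k+1}$ (if $d_k=d_{k+1}$ then $t=m-\ell<d_{k+1}=d_k$ is impossible), so this regime is disjoint from the second equality case. Here the canonical decomposition of $d-m$ stops at some index $k'\le k-2$, and one gets $\delta_{\X_{\widehat{k+1}}}(d-m)=(d_{k'+1}-\ell')\prod_{i=k'+2}^k d_i\,\prod_{i=k+2}^n d_i$ with $d_{k'+1}-\ell'=t+1-Q$, where $Q=\sum_{i=k'+2}^k(d_i-1)$. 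Setting $c=d_{k+1}-\ell$ and $P=\prod_{i=k'+2}^k d_i$ (so $P\ge Q+1$, with equality iff $k'=k-2$), the required strict inequality becomes $(c-t)(t+1-Q)P>c$. The left-hand side is a downward parabola in $t$ taking the common value $(c-Q)P$ at the symmetric points $t=Q$ and $t=c-1$, so over the admissible range its minimum is essentially $(c-Q)P$; then $c>Q$ and $P\ge Q+1$, sharpened by the gaps that $t\ge d_k$ forces (namely $c\ge Q+2$ when $P=Q+1$, and $P\ge Q+2$ otherwise), give $(c-Q)P>c$, i.e.\ strict inequality throughout. Collecting the four regimes yields the inequality together with the stated equality characterization. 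The delicate bookkeeping of $k'$, $Q$ and $P$ in this last regime is where all the difficulty lies; everything else parallels Lemmas \ref{desigparcial} and \ref{numericoj}.
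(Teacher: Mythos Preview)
Your proof is correct, and the inequality portion (padding an optimal tuple for $\X_{\widehat{k+1}}$ with $m$ in slot $k+1$ and invoking Lemma~\ref{2.1}) is exactly what the paper does as well. Where you diverge from the paper is in the equality characterization.

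The paper does not split into regimes according to $t=m-\ell$ versus $d_k$. Instead, after writing $d-m=\sum_{i=1}^{\tilde k}(d_i-1)+\tilde\ell$ it observes that the left side of the key inequality is a value of the function $M(a_{\tilde k+1},\dots,a_{k+1})=\prod_i(d_i-a_i)$ at the particular tuple $(\tilde\ell,0,\dots,0,m)$, and then invokes the classification from \cite[Lemma~2.2]{car-neu2017} of all normalized tuples at which $M$ attains its minimum. The equality analysis then reduces to checking which normalizations of $(\tilde\ell,0,\dots,0,m)$ can match one of the two listed minimizer types; this forces $\tilde k\in\{k-1,k-2\}$ and quickly yields the two stated values of $m$.

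Your direct regime-by-regime computation achieves the same end without citing the external classification, at the cost of the ``delicate regime'' $t\ge d_k$. Your argument there is sound: the concavity of $(c-t)(t+1-Q)$ on $[Q,c-1]$ gives the lower bound $(c-Q)P$, and your dichotomy (either $k'=k-2$, whence $P=Q+1$ and the constraint $t\ge d_k=Q+1$ forces $c\ge Q+2$; or $k'\le k-3$, whence $P\ge Q+2$ since the product has at least two factors $\ge 2$) does yield $(c-Q)P>c$ via the identities $(c-Q)(Q+1)-c=Q(c-Q-1)\ge Q\ge 1$ and $(c-Q)(Q+2)-c\ge (Q+1)^2-Q(Q+2)=1$. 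So what you label ``essentially'' is in fact an honest lower bound, and the sketch closes. The trade-off is clear: the paper's route is shorter because the hard combinatorics is outsourced to \cite{car-neu2017}; yours is self-contained but requires the careful bookkeeping you flag at the end.
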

\begin{proof}
By Lemma \ref{desigparcial}, we may consider
$\ell \leq m \leq d_{k+1} - 1$. 
In this case 
we write 
$$
d - m = \sum_{i=1}^{\widetilde{k}} (d_i - 1) + \widetilde{\ell}\, ,
$$
where
$$
0 \leq \widetilde{k} < k , \quad
\widetilde{\ell} = \ell - m + \sum_{i=\widetilde{k}+1}^{k} (d_i - 1) > 0
\quad \text{and} \quad
\ell - m + \sum_{i=\widetilde{k}+2}^{k} (d_i - 1) \leq 0\;,  
$$ 
hence $\widetilde{\ell} \leq d_{\widetilde{k}+1} - 1$.
We want to prove that 
$$
(d_{k+1} - m ) \delta_{\mathcal{X}_{\widehat{k+1}}} (d - m) \geq 
\delta_{\mathcal{X}}(d) = (d_{k+1} - \ell) 
\prod_{i=k+2}^{n} d_i,
$$ 
and from $k \geq \widetilde{k} + 1$ we get $k+1 \in \{\widetilde{k}+2,\ldots , 
n \}$, so that  
\[ 
\delta_{\mathcal{X}_{\widehat{k+1}}} (d - m)  =
(d_{\widetilde{k}+1} - \widetilde{\ell}) 
\prod_{i=\widetilde{k}+2 , i \neq k+1}^{n} d_i .
\]
Thus we must verify that
\begin{equation}\label{desigM}
(d_{\widetilde{k}+1} - \widetilde{\ell}) 
\left( \prod_{i=\widetilde{k}+2}^{k} d_i \right)
(d_{k+1} - m ) \geq
(d_{k+1} - \ell) .
\end{equation}
Let $M$ be the function defined by
$$
M(a_{\widetilde{k}+1},\ldots,a_{k+1})
= (d_{\widetilde{k}+1} - a_{\widetilde{k}+1}) .\cdots.
(d_{k+1}-a_{k+1}),
$$
where $a_i$ is a nonnegative integer less than $d_i$, for $i = 	
\widetilde{k}+1, \ldots, k + 1$, and
$a_{\widetilde{k}+1} + \cdots + a_{k+1} \leq \widetilde{\ell}+m$.
We have studied this function in \cite{carvalho} and \cite{car-neu2017}.   
From $\widetilde{\ell}+m =
\sum_{i=\widetilde{k}+1}^{k} (d_i - 1) + \ell$ and 
\cite[Lemma 2.1]{carvalho} we get $d_{k+1} - \ell$ is the minimum of $M$ so 
that inequality \eqref{desigM} holds. To find out when \eqref{desigM}
is an equality we will use 
results from \cite{car-neu2017}, and for that we define a tuple 
$(a_{\widetilde{k}+1} , \ldots , a_{k+1})$ to be 
normalized if whenever $d_{i-1} < d_i = \cdots = d_{i+s}<d_{i+s+1}$
we have $a_i \ge a_{i+1} \ge \cdots \ge a_{i+s}$.
From \cite[Lemma 2.2]{car-neu2017} we get that the normalized tuples which 
reach the minimum of $M$ are exactly of the type:
\begin{enumerate}
\item  
$(a_{\widetilde{k}+1},\ldots,a_{k+1}) = 
(d_{\widetilde{k}+1}  -1 , \ldots d_k - 1 , \ell)$, or 
\item
$(a_{\widetilde{k}+1},\ldots,a_{k+1}) = 
(d_{\widetilde{k}+1}  -1 , \ldots , d_j - (d_{k+1}-\ell),\ldots,
d_{k+1} - 1)$. 
\end{enumerate}
Type 2 is only possible if 
$d_{k+1}-\ell \leq d_j <d_{k+1}$, we also note that 
if $\ell = d_{k+1}-1$ then types 1 and 2 are the same so
we also assume in type 2 that $\ell < d_{k+1}-1$.
Thus we have equality in \eqref{desigM}  if and only if the tuple 
$(\widetilde{\ell}, 
0, \dots, 0, m)$, when normalized, is equal to $(d_{\widetilde{k}+1}  -1 , 
\ldots d_k - 1 , \ell)$ or 
$(d_{\widetilde{k}+1}  -1 , \ldots , d_j - (d_{k+1}-\ell),\ldots,
d_{k+1} - 1)$.

In the first case, since we don't have any zero entries in 
$(d_{\widetilde{k}+1}  -1 , 
\ldots d_k - 1 , \ell)$ we must have 
$\widetilde{k}+1=k$ and the tuple $(\widetilde{\ell},m)$ when normalized is 
equal to $(d_k - 1 , \ell)$,  
thus we must have either $(\widetilde{\ell},m) = 
(d_k - 1 , \ell)$ or $(m, \widetilde{\ell}) = (d_k - 1 , \ell)$. 
If $(\widetilde{\ell},m) = 
(d_k - 1 , \ell)$ then $m = \ell$, and if $(m, \widetilde{\ell}) = (d_k - 1 , 
\ell)$, then $m = d_k - 1$ and 
from the 
definition of normalized tuple we also must 
have $d_k = d_{k + 1}$.
On the other 
hand if $m = \ell$, from 
$
d  = \sum_{i=1}^{k} (d_i - 1) + \ell
$
we get 
$$
d - m = \sum_{i=1}^{k-1} (d_i - 1) + (d_{k} - 1)
$$
so we must have $\widetilde{k} = k - 1$ and  $\widetilde{\ell} = d_k - 1$, 
hence $(\widetilde{\ell},m) = 
(d_k - 1 , \ell)$.
And if $m = d_k - 1 = d_{k + 1} - 1$, from 
$
d  = \sum_{i=1}^{k} (d_i - 1) + \ell
$
we get 
$$
d - m = \sum_{i=1}^{k-1} (d_i - 1) + \ell
$$
so we must have $\widetilde{k} = k - 1$ and  $\widetilde{\ell} = \ell$, 
hence $(m, \widetilde{\ell}) = 
(d_k - 1 , \ell)$.

The upshot of this is that 
$(\widetilde{\ell},m)$ when normalized is 
equal to $(d_k - 1 , \ell)$
if and only if 
$m  = \ell$ 
or both $m = d_{k + 1} - 1$
and $d_k = d_{k + 1}$.

In the second case, since we may have at most only one zero entry in 
$$
(d_{\widetilde{k}+1}  -1 , \ldots , d_j - (d_{k+1}-\ell),\ldots,
d_{k+1} - 1),	
$$
we must have
$\widetilde{k} + 1=k$
or $\widetilde{k} + 2=k$.
If $\widetilde{k} + 1=k$
then the above tuple is an ordered pair, 
and since it is a type 2 tuple we must 
have that $d_k < d_{k + 1}$ and that this pair is  $(d_k - 
(d_{k+1}-\ell),d_{k+1} - 1)$. Since $d_k < d_{k + 1}$ the tuple 
$(\widetilde{\ell},m)$ is already 
normalized, and if  $(\widetilde{\ell},m) = (d_k - (d_{k+1}-\ell),d_{k+1} - 1)$ 
then $m = d_{k+1} - 1$ and $\widetilde{\ell} = d_k - (d_{k+1}-\ell)$ so that 
$d_k - (d_{k+1}-\ell) > 0$. On the other 
hand if $m = d_{k+1} - 1$ and $d_k - (d_{k+1}-\ell) > 0$, from 
$
d  = \sum_{i=1}^{k} (d_i - 1) + \ell
$
we get 
$$
d - m = \sum_{i=1}^{k} (d_i - 1) + \ell - (d_{k + 1} - 1) =
\sum_{i=1}^{k - 1} (d_i - 1) + d_k - (d_{k + 1} - \ell) 
$$
so we must have $\widetilde{k} = k - 1$ and  $\widetilde{\ell} = d_k - 
(d_{k+1}-\ell)$, hence $(\widetilde{\ell},m) = (d_k - (d_{k+1}-\ell),d_{k+1} - 
1)$.

If $\widetilde{k} + 2 = k$ then we must have $d_k < d_{k + 1}$ so the 
tuple $(\widetilde{\ell},0,m)$ is already  normalized, and if 
$(\widetilde{\ell},0,m) = (d_{k-1}-1,d_k - (d_{k+1}-\ell),d_{k+1} - 1)$ then 
$d_k = d_{k+1}-\ell$ and $m=d_{k+1}-1$. On the other hand if  
$m = d_{k+1} - 1$ and $d_k - (d_{k+1}-\ell) = 0$ from 
$
d  = \sum_{i=1}^{k} (d_i - 1) + \ell
$
we get 
$$
d - m = \sum_{i=1}^{k} (d_i - 1) + \ell - (d_{k + 1} - 1) =
\sum_{i=1}^{k - 2} (d_i - 1) + d_{k - 1} - 1 
$$
so we must have $\widetilde{k} = k - 2$ and  $\widetilde{\ell} = d_{k - 1}
- 1$, hence $(\widetilde{\ell},0,m) = (d_{k-1}-1,d_k - (d_{k+1}-\ell),d_{k+1} - 
1)$.

Thus we have equality in \eqref{desigM}  if and only if
$m=\ell$ or both $m=d_{k+1}-1$ and $d_k \geq d_{k + 1} - \ell$.
\end{proof}
%
%
	
\begin{proposition}\label{minimal}
Let $f$ be a nonzero function in $C_\mathcal{X}(d)$,  and let $h \in C_{\X}(d)$ 
be such that $h = x_{j} \circ \varphi$, where 
$\varphi \in \aff(\X)$
and $1 \leq j \leq k+1$. 
Let  $m > 0$ be the number of $\alpha \in K_{j}$ 
such that 
$Z_\mathcal{X}(h - \alpha) \subset Z_\mathcal{X}(f)$.
Let $g = f \circ \varphi^{-1}$,  
then 
$|f| = \delta_{\mathcal{X}} (d)$ if and only if 
$|g_{\alpha}^{(j)}| =
\delta_{\mathcal{X}_{\widehat{j}}} (d - m)$
whenever $g_{\alpha}^{(j)} \neq 0$, with
$\alpha \in K_{j}$ 
and $m$ satisfies one of the following:\\
1) If $1 \leq j \leq k$ then
$m=d_j - 1$ or 
both $m=\ell + d_j  - d_{k+1}$ and $d_j > d_{k+1} - \ell$. \\
2) If $j=k+1$ then
$m=\ell$ or both $m=d_{k+1}-1$ and $d_k \geq d_{k + 1} - \ell$.
\end{proposition}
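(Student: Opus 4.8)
The plan is to transfer everything to $g = f \circ \varphi^{-1}$, where the hypotheses become statements about the coordinate slices $\{x_j = \alpha\}$, and then to read off the two equality conditions from the numerical lemmas already established. First I would note that $\varphi$ restricts to a bijection of $\mathcal{X}$, so $|f| = |g|$, and that $\deg g = \deg f \le d$ by Lemma \ref{grau}, whence $g \in C_{\mathcal{X}}(d)$. Since $h = x_j \circ \varphi$, the correspondence $Z_\mathcal{X}(h - \alpha) \subset Z_\mathcal{X}(f)$ holds if and only if $Z_\mathcal{X}(x_j - \alpha) \subset Z_\mathcal{X}(g)$; thus $m$ is exactly the number of $\alpha \in K_j$ for which $g$ vanishes identically on the slice $\{x_j = \alpha\}$. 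In particular $m < d_j$, since $m = d_j$ would force $g \equiv 0$. Writing $\alpha_1, \dots, \alpha_m$ for these elements, Lemma \ref{divide2} gives $g = q \prod_{i=1}^{m}(x_j - \alpha_i)$ with $q \in C_\mathcal{X}(d-m)$.

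Next I would decompose the weight slice by slice: $|f| = |g| = \sum_{\alpha \in K_j} |g_\alpha^{(j)}|$. For $\alpha \in \{\alpha_1, \dots, \alpha_m\}$ the slice vanishes, while for the remaining $d_j - m$ values one has $g_\alpha^{(j)} = \big(\prod_{i}(\alpha - \alpha_i)\big)\, q_\alpha^{(j)}$, a nonzero scalar multiple of $q_\alpha^{(j)}$. Such a slice is genuinely nonzero: if $q_\alpha^{(j)}$ vanished then $g$ would vanish on $\{x_j=\alpha\}$, putting $\alpha$ into the complete list $\{\alpha_i\}$, a contradiction. Hence there are exactly $d_j - m$ nonzero slices, each lying in $C_{\mathcal{X}_{\widehat{j}}}(d-m)$ because $q \in C_\mathcal{X}(d-m)$. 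Theorem \ref{min-dist} then gives $|g_\alpha^{(j)}| \ge \delta_{\mathcal{X}_{\widehat{j}}}(d-m)$, so that $|f| \ge (d_j - m)\,\delta_{\mathcal{X}_{\widehat{j}}}(d-m)$, with equality precisely when every nonzero slice attains the minimum, i.e.\ $|g_\alpha^{(j)}| = \delta_{\mathcal{X}_{\widehat{j}}}(d-m)$.

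I would then invoke the numerical input. Since $0 < m < d_j$, Lemma \ref{numericoj} (when $1 \le j \le k$) and Lemma \ref{numericok} (when $j = k+1$) both yield $(d_j - m)\,\delta_{\mathcal{X}_{\widehat{j}}}(d-m) \ge \delta_{\mathcal{X}}(d)$, together with the exact description of when equality holds. Chaining the two bounds gives $|f| \ge (d_j - m)\,\delta_{\mathcal{X}_{\widehat{j}}}(d-m) \ge \delta_{\mathcal{X}}(d)$, so $|f| = \delta_{\mathcal{X}}(d)$ if and only if both inequalities are equalities. The first equality is exactly the condition that $|g_\alpha^{(j)}| = \delta_{\mathcal{X}_{\widehat{j}}}(d-m)$ whenever $g_\alpha^{(j)} \neq 0$, and the second is exactly condition (1) for $j \le k$ or condition (2) for $j = k+1$ on $m$; together these give the claimed equivalence.

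The argument is essentially a repackaging of earlier results, so what is needed here is bookkeeping rather than new ideas: confirming the range $0 < m < d_j$ so the numerical lemmas apply directly, counting exactly $d_j - m$ nonzero slices, and checking the degree bound placing each slice in $C_{\mathcal{X}_{\widehat{j}}}(d-m)$. The genuine difficulty, namely the delicate case analysis with normalized tuples that determines when $(d_j - m)\,\delta_{\mathcal{X}_{\widehat{j}}}(d-m) = \delta_{\mathcal{X}}(d)$, has already been absorbed into Lemmas \ref{numericoj} and \ref{numericok}, so no real obstacle remains at this stage.
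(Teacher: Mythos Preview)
Your proposal is correct and follows essentially the same approach as the paper's own proof: reduce to the coordinate case via $g = f\circ\varphi^{-1}$, use the slice decomposition $|g| = \sum_\alpha |g_\alpha^{(j)}|$ to obtain $|f|\ge (d_j-m)\,\delta_{\mathcal{X}_{\widehat{j}}}(d-m)$ with equality exactly when every nonzero slice is minimal, and then invoke Lemmas~\ref{numericoj} and~\ref{numericok} for the numerical equality condition. The only difference is that you spell out a few bookkeeping details (e.g.\ $m<d_j$ and the count of nonzero slices) that the paper leaves implicit by simply referring back to the proof of Lemma~\ref{desig_dist}.
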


%
%
%
\begin{proof}
Let $j \in \{ 1, \ldots, k + 1 \}$.
As in the beginning of the proof of Lemma \ref{desig_dist} we may assume that 
$\varphi$ is the identity, so that $h = x_{\textcolor{magenta}{j}}$.  From the proof of Lemma 
\ref{desig_dist} we get 
$$
|f| = \sum_{\alpha \in K_{\textcolor{magenta}{j}}} |f_\alpha^{(\textcolor{magenta}{j})}|
\ge (d_{\textcolor{magenta}{j}} - m )
 \delta_{\mathcal{X}_{\widehat{\textcolor{magenta}{j}}}} (d - m)
$$ 
and equality holds if and only if 
$|f_{\alpha}^{(\textcolor{magenta}{j})}| 
= \delta_{\mathcal{X}_{\widehat{\textcolor{magenta}{j}}}} (d - m)$
whenever $f_{\alpha}^{(\textcolor{magenta}{j})} \neq 0$, with
$\alpha \in K_{j}$. 
From the two previous Lemmas we know that  
$ \delta_{\mathcal{X}_{\widehat{j}}} (d - m) \geq \delta_{\mathcal{X}} (d)$ and 
we also know when equality holds. 
\end{proof}
	

%
%

As mentioned in the paragraph preceding Lemma \ref{desigparcial} we 
are investigating 
when $(d_j - m ) \delta_{\mathcal{X}_{\widehat{j}}} (d - m) \geq 
\delta_{\mathcal{X}} (d)$ holds, and under which conditions equality holds.
Now we treat the case where $m = 0$.

\begin{lemma}\label{desigparcial0}
Let $1 \leq j \leq k+1$. We have
$$
d_{j} \delta_{\mathcal{X}_{\widehat{j}}}
(d) \geq \delta_{\mathcal{X}} (d)
$$
with equality if and only if $d_j = d_{k+1}-\ell$ or $d_j = d_{k+2}$.
\end{lemma}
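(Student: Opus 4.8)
The plan is to reduce everything to the explicit formula of Theorem \ref{min-dist}. Deleting the $j$-th coordinate turns $\mathcal{X}$ into $\mathcal{X}_{\widehat{j}}$, whose sequence of field sizes $(d_i)_{i\neq j}$ is still nondecreasing, so $\delta_{\mathcal{X}_{\widehat{j}}}(d)$ is again given by that formula once I locate the decomposition $d=\sum(d_i-1)+\ell'$ relative to this shorter sequence. Before the case analysis I would dispose of the inequality in one stroke: by Lemma \ref{2.1}, the product $d_j\,\delta_{\mathcal{X}_{\widehat{j}}}(d)$ equals the minimum of $\prod_{i=1}^n(d_i-a_i)$ taken over the tuples with $a_j=0$ and $\sum_i a_i\le d$, and since these tuples form a subset of those defining $\delta_{\mathcal{X}}(d)$, we immediately get $d_j\,\delta_{\mathcal{X}_{\widehat{j}}}(d)\ge\delta_{\mathcal{X}}(d)$. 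The substance of the lemma is therefore the equality statement, for which I pass to the explicit formulas.

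The case $j=k+1$ is direct: removing $d_{k+1}$ leaves the first $k$ terms of the decomposition untouched, so $\delta_{\mathcal{X}_{\widehat{k+1}}}(d)=(d_{k+2}-\ell)\prod_{i=k+3}^n d_i$ and a short computation gives
$$
d_{k+1}\,\delta_{\mathcal{X}_{\widehat{k+1}}}(d)-\delta_{\mathcal{X}}(d)=\ell\,(d_{k+2}-d_{k+1})\prod_{i=k+3}^n d_i\ge 0,
$$
with equality (as $\ell>0$) exactly when $d_{k+1}=d_{k+2}$, i.e.\ when $d_j=d_{k+2}$. For $1\le j\le k$ the point is that deleting coordinate $j$ frees a budget of $d_j-1$, which may or may not push $\ell$ past $d_{k+1}-1$; accordingly I split on the sign of $\ell+d_j-d_{k+1}$. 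If $\ell+d_j-d_{k+1}\le 0$ the new datum is $k'=k-1$, $\ell'=d_j-1+\ell$, giving $\delta_{\mathcal{X}_{\widehat{j}}}(d)=(d_{k+1}-d_j+1-\ell)\prod_{i=k+2}^n d_i$; if $\ell+d_j-d_{k+1}>0$ a carry occurs, $k'=k$, $\ell'=d_j+\ell-d_{k+1}$ (one checks $\ell'\le d_{k+2}-1$ from $d_j\le d_{k+1}$), and $\delta_{\mathcal{X}_{\widehat{j}}}(d)=(d_{k+2}+d_{k+1}-d_j-\ell)\prod_{i=k+3}^n d_i$.

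In each subcase the difference $d_j\,\delta_{\mathcal{X}_{\widehat{j}}}(d)-\delta_{\mathcal{X}}(d)$ factors cleanly: in the first it equals $(d_j-1)(d_{k+1}-d_j-\ell)\prod_{i=k+2}^n d_i$, and in the second $(d_{k+2}-d_j)(d_j+\ell-d_{k+1})\prod_{i=k+3}^n d_i$, both manifestly nonnegative. Since $d_j\ge 2$, equality in the first subcase forces $d_{k+1}-d_j-\ell=0$, i.e.\ $d_j=d_{k+1}-\ell$, and since $d_j+\ell-d_{k+1}>0$ there, equality in the second forces $d_{k+2}-d_j=0$, i.e.\ $d_j=d_{k+2}$. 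Finally I note that the two alternatives are mutually exclusive across the subcases ($d_j=d_{k+1}-\ell$ forces $\ell+d_j-d_{k+1}=0$, hence the first subcase, while $d_j=d_{k+2}$ forces $d_j=d_{k+1}=d_{k+2}$, hence the second), so together they give exactly the stated criterion $d_j=d_{k+1}-\ell$ or $d_j=d_{k+2}$. The only delicate points are the bookkeeping that produces the correct $(k',\ell')$ — in particular recognizing the carry in the second subcase — and the boundary conventions when $k+2>n$, where the trailing products are empty and, in the carried subcase, $\delta_{\mathcal{X}_{\widehat{j}}}(d)=1$ so that the inequality is strict, consistent with the alternative $d_j=d_{k+2}$ being vacuous; once these are handled the factorizations do all the work, and no use of the field-tower hypothesis is needed.
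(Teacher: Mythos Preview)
Your proof is correct and follows essentially the same route as the paper: split according to whether $d_j \le d_{k+1}-\ell$ or $d_j > d_{k+1}-\ell$, compute $\delta_{\mathcal{X}_{\widehat{j}}}(d)$ from Theorem~\ref{min-dist} in each case, and factor the difference as $(d_j-1)(d_{k+1}-\ell-d_j)$ or $(d_j-(d_{k+1}-\ell))(d_{k+2}-d_j)$ respectively. Your preliminary appeal to Lemma~\ref{2.1} and your separate treatment of $j=k+1$ are cosmetic additions --- the paper simply absorbs $j=k+1$ into the second case, since $d_{k+1}>d_{k+1}-\ell$ automatically.
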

\begin{proof}
If $d_j \leq d_{k+1} - \ell$ we may write
$$
d  = \sum_{i=1,i \neq j}^{k} (d_i - 1) + \ell + (d_j - 1),
$$
so that  $\delta_{\mathcal{X}_{\widehat{j}}} (d ) = 
(d_{k+1} - (\ell + d_j - 1)) 
\displaystyle
\prod_{i=k+2}^{n} d_i$.
From 
$$
d_j (d_{k+1} - (\ell + d_j - 1)) - (d_{k+1} - \ell))   =
(d_j - 1)(d_{k+1} - \ell -d_j ) \geq 0
$$
we get 
$$
d_{j} \delta_{\mathcal{X}_{\widehat{j}}}
(d) \geq  (d_{k+1} - \ell) 
\prod_{i=k+2}^{n} d_i = \delta_{\mathcal{X}} (d),
$$
with equality if and only if $d_j = d_{k+1} - \ell$.

If $d_j > d_{k+1} - \ell$ we may write
$$
d = \sum_{i=1,i \neq j}^{k+1} (d_i - 1) + \ell + d_j - d_{k+1} \, ,
$$
so that  $\delta_{\mathcal{X}_{\widehat{j}}} (d ) = 
(d_{k+2} - (\ell + d_j - d_{k+1})) 
\displaystyle
\prod_{i=k+3}^{n} d_i$.
From 
$$
d_j	(d_{k+2} - (\ell + d_j - d_{k+1})) - (d_{k+1} - \ell)d_{k+2}  =
(d_j - (d_{k+1} - \ell)) 
(d_{k + 2} - d_j) \geq 0
$$
we get 
$$
d_{j} \delta_{\mathcal{X}_{\widehat{j}}} (d) \geq
\delta_{\mathcal{X}} (d),
$$
with equality if and only if $d_j = d_{k+2}$.
\end{proof}

%
%
\begin{proposition} \label{d1dk}
	Let $f \in C_{\mathcal{X}}(d)$ and suppose that
	$d_j < d_{k+1}- \ell$
	for some $1 \leq j \leq k$. If $|f| = \delta_\mathcal{X} (d)$ then
	the number of $\alpha \in K_{j}$ 
	such that 
	$Z_\mathcal{X}(x_j - \alpha) \subset Z_\mathcal{X}(f)$
	is $d_j - 1$ and for $\alpha \in K_{j}$ such that
	$f_\alpha^{(j)} \neq 0$ we have
	$|f_\alpha^{(j)}| = |f| = \delta_\mathcal{X} (d)
	= \delta_{\mathcal{X}_{\widehat{j}}} (d - (d_j - 1))$.
\end{proposition}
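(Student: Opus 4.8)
The plan is to turn the lower bound of Lemma \ref{desig_dist} into an equality and read off the structure from the numerical lemmas, using the hypothesis $d_j < d_{k+1}-\ell$ only to discard the competing cases. Write $m$ for the number of $\alpha\in K_j$ with $Z_\mathcal{X}(x_j-\alpha)\subset Z_\mathcal{X}(f)$. Since $f\neq 0$ and $\mathcal{X}=\bigcup_{\alpha\in K_j}Z_\mathcal{X}(x_j-\alpha)$, we cannot have $m=d_j$, so $m<d_j$. Taking $h=x_j$ (that is, $\varphi$ the identity) in Lemma \ref{desig_dist} gives $|f|\ge (d_j-m)\,\delta_{\mathcal{X}_{\widehat{j}}}(d-m)$, and this is the inequality I intend to saturate.

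First I would rule out $m=0$. If $m=0$, Lemma \ref{desigparcial0} yields $d_j\,\delta_{\mathcal{X}_{\widehat{j}}}(d)\ge\delta_\mathcal{X}(d)$ with equality only when $d_j=d_{k+1}-\ell$ or $d_j=d_{k+2}$. But the hypothesis together with the chain $d_j<d_{k+1}-\ell\le d_{k+1}-1<d_{k+1}\le d_{k+2}$ excludes both, so the inequality is strict and $|f|\ge d_j\,\delta_{\mathcal{X}_{\widehat{j}}}(d)>\delta_\mathcal{X}(d)$, contradicting $|f|=\delta_\mathcal{X}(d)$. Hence $0<m<d_j$, and Lemma \ref{numericoj} applies: for such $m$ one has $(d_j-m)\,\delta_{\mathcal{X}_{\widehat{j}}}(d-m)\ge\delta_\mathcal{X}(d)=|f|$. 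Combined with the bound of the previous paragraph this squeezes the quantity to an equality $(d_j-m)\,\delta_{\mathcal{X}_{\widehat{j}}}(d-m)=\delta_\mathcal{X}(d)$, and the equality clause of Lemma \ref{numericoj} forces either $m=d_j-1$, or $m=\ell+d_j-d_{k+1}$ with $d_j>d_{k+1}-\ell$. The second alternative is incompatible with $d_j<d_{k+1}-\ell$, so $m=d_j-1$, which is the first assertion. (Once $m>0$ is established, one may equivalently read this straight off Proposition \ref{minimal}.)

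For the second assertion, $m=d_j-1$ leaves exactly one value $\alpha_0\in K_j$ with $Z_\mathcal{X}(x_j-\alpha_0)\not\subset Z_\mathcal{X}(f)$; for each of the other $d_j-1$ values the inclusion $Z_\mathcal{X}(x_j-\alpha)\subset Z_\mathcal{X}(f)$ says precisely that $f$ vanishes on the slice $x_j=\alpha$, i.e. $f_\alpha^{(j)}=0$. Thus $f_\alpha^{(j)}\neq 0$ only for $\alpha=\alpha_0$, whence $|f|=\sum_{\alpha\in K_j}|f_\alpha^{(j)}|=|f_{\alpha_0}^{(j)}|$. Substituting $d_j-m=1$ into the equality $(d_j-m)\,\delta_{\mathcal{X}_{\widehat{j}}}(d-m)=\delta_\mathcal{X}(d)$ gives $\delta_\mathcal{X}(d)=\delta_{\mathcal{X}_{\widehat{j}}}(d-(d_j-1))$, so $|f_{\alpha_0}^{(j)}|=|f|=\delta_\mathcal{X}(d)=\delta_{\mathcal{X}_{\widehat{j}}}(d-(d_j-1))$, as required.

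I expect no serious obstacle: the genuine content already lives in the numerical lemmas, and the hypothesis $d_j<d_{k+1}-\ell$ plays a purely selective role — it makes the $m=0$ estimate strict (via Lemma \ref{desigparcial0}) and eliminates the second equality branch of Lemma \ref{numericoj}. The only point needing a little care is the bookkeeping tying $m=d_j-1$ to the vanishing of all but one restriction $f_\alpha^{(j)}$, and checking that the two minimum distances $\delta_\mathcal{X}(d)$ and $\delta_{\mathcal{X}_{\widehat{j}}}(d-(d_j-1))$ indeed coincide.
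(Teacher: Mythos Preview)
Your proof is correct and follows essentially the same route as the paper: apply Lemma \ref{desig_dist} with $h=x_j$, use Lemma \ref{desigparcial0} together with $d_j<d_{k+1}-\ell$ to exclude $m=0$, then invoke Lemma \ref{numericoj} to force $m=d_j-1$, and finish by noting that a single nonzero slice $f_{\alpha_0}^{(j)}$ carries all of $|f|$. The paper's version is terser but the logical skeleton is identical.
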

%
%
\begin{proof}
Let $m$ be the number of $\alpha \in K_j$ such that 
$Z_\mathcal{X}(x_j - \alpha) \subset Z_\mathcal{X}(f)$.
By Lemma \ref{desig_dist} we have 
$|f| \ge (d_j - m ) \delta_{\mathcal{X}_{\widehat{j}}} (d - m)$.
As $d_j < d_{k+1}- \ell$ and $|f| = \delta_{\mathcal{X}} (d)$,
from Lemma \ref{desigparcial0} we get $m>0$ and
from Lemma \ref{numericoj} we have
$m=d_j - 1$ and
$\delta_{\mathcal{X}_{\widehat{j}}} (d - (d_j - 1))
=\delta_\mathcal{X} (d)$.
We conclude by observing that
for the only element $\alpha \in K_j$ such that
$f_\alpha^{(j)} \neq 0$ we have
$|f| =
|f_\alpha^{(j)}|$.
\end{proof}
%
%

\section{Main results}

As in the preceding section we continue to write $d$ as in the statement of 
Theorem \ref{min-dist}, namely $d = \sum_{i =1}^k (d_i -1) + 
\ell
$,  with $0 < \ell \leq d_{k + 1} - 1$ (and $0 \leq k < n$).
The next result describes the minimal weight codewords of affine cartesian 
codes for the lowest range of values of $d$, meaning the case when $k = 0$.

%
%
\begin{proposition}\label{minimalcodewords1}
Let $1 \le d < d_1$, the minimal weight codewords of $C_\mathcal{X}(d)$
are $\mathcal{X}$-equivalent to the functions 
$$
\sigma \prod_{i=1}^{\ell} (x_1 - \alpha_i)\, ,
$$
with $\sigma \in \K^*$, $\alpha_i \in K_1$ and 
$\alpha_i \neq \alpha_j$ for $1 \le i \neq j \le \ell$.
\end{proposition}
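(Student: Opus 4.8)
The plan is to use the fact that the hypothesis $1 \le d < d_1$ places us in the case $k = 0$, $\ell = d$, so by Theorem~\ref{min-dist} the minimum distance is $\delta_{\mathcal{X}}(d) = (d_1 - d)\, d_2 \cdots d_n$. Let $f$ be a codeword with $|f| = \delta_{\mathcal{X}}(d)$. Since $\delta_{\mathcal{X}}(d) < (1 + 1/d_1)\delta_{\mathcal{X}}(d)$, Corollary~\ref{fatores} (with $k+1 = 1$) produces a degree-one factor $h$ of $f$ that is $\mathcal{X}$-equivalent to $x_1$, say $h = x_1 \circ \psi^{-1}$ with $\psi \in \aff(\mathcal{X})$; inspecting the proof of that corollary shows $f \circ \psi$ is divisible by $x_1$. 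Replacing $f$ by the $\mathcal{X}$-equivalent function $f \circ \psi$ — which by Lemma~\ref{grau} has the same degree and, as $\psi|_{\mathcal{X}}$ permutes $\mathcal{X}$, the same weight — I may assume $x_1 \mid f$. It then suffices to show such an $f$ equals $\sigma \prod_{i=1}^{\ell}(x_1 - \alpha_i)$ with distinct $\alpha_i \in K_1$ and $\sigma \in \K^*$, since the original codeword will then be $\mathcal{X}$-equivalent to a function of this form.

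Next I would count the number $m$ of elements $\alpha \in K_1$ with $Z_{\mathcal{X}}(x_1 - \alpha) \subset Z_{\mathcal{X}}(f)$. Because $0 \in K_1$ and $x_1 \mid f$ we have $m \ge 1$, and Lemma~\ref{divide2} lets me write $f = g \prod_{i=1}^{m}(x_1 - \alpha_i)$ with $g \in C_{\mathcal{X}}(d - m)$, forcing $1 \le m \le d$. Applying Lemma~\ref{desig_dist} with $j = 1$ yields the weight estimate $|f| \ge (d_1 - m)\,\delta_{\mathcal{X}_{\widehat{1}}}(d - m)$.

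The crux is to show this forces $m = d$. The main obstacle — and the reason this lowest-range case is isolated in a separate proposition — is that Lemma~\ref{numericok}, which settles the analogous inequality for $j = k+1$ in general, is unavailable when $k = 0$, since both its proof and its equality condition refer to $d_k$. Instead I would invoke Lemma~\ref{desigparcial}: with $j = 1 = k+1$ and $d_1 > d_1 - \ell$, it gives the strict inequality $(d_1 - m)\delta_{\mathcal{X}_{\widehat{1}}}(d - m) > \delta_{\mathcal{X}}(d)$ for every $0 < m < d$, while at the boundary $m = d$ a direct computation gives $(d_1 - d)\delta_{\mathcal{X}_{\widehat{1}}}(0) = (d_1 - d)\,d_2\cdots d_n = \delta_{\mathcal{X}}(d)$. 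Combined with $|f| \ge (d_1 - m)\delta_{\mathcal{X}_{\widehat{1}}}(d - m)$ and $|f| = \delta_{\mathcal{X}}(d)$, this rules out $m < d$ and forces $m = d = \ell$. Consequently $g \in C_{\mathcal{X}}(0)$ is a nonzero constant $\sigma \in \K^*$, whence $f = \sigma \prod_{i=1}^{\ell}(x_1 - \alpha_i)$ with the $\alpha_i$ distinct in $K_1$, as required.

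Finally I would record the converse, which is immediate: any $\sigma \prod_{i=1}^{\ell}(x_1 - \alpha_i)$ with distinct $\alpha_i \in K_1$ has degree $\ell = d$ and vanishes exactly on the $\ell$ parallel hyperplanes $x_1 = \alpha_i$, hence has weight $(d_1 - \ell)\,d_2 \cdots d_n = \delta_{\mathcal{X}}(d)$; and since $\mathcal{X}$-equivalence preserves weight and degree, every $\mathcal{X}$-equivalent function is again a minimal weight codeword. I expect the only genuinely delicate point to be the boundary bookkeeping at $m = d$ together with the observation that the general numerical lemma does not cover $k = 0$; the remaining steps are direct applications of the preparatory lemmas.
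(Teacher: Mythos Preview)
Your proof is correct and follows essentially the same approach as the paper: invoke Corollary~\ref{fatores} to obtain a degree-one factor $\mathcal{X}$-equivalent to $x_1$, reduce to $h = x_1$, count the number $m$ of $\alpha \in K_1$ with $Z_{\mathcal{X}}(x_1 - \alpha) \subset Z_{\mathcal{X}}(f)$, force $m = \ell$, and conclude via Lemma~\ref{divide2}. The only difference is that the paper cites Proposition~\ref{minimal}(2) directly (using $m \le d = \ell$ to discard the second alternative), whereas you --- having correctly observed that Lemma~\ref{numericok} and the $d_k$-clause in Proposition~\ref{minimal}(2) do not literally cover $k = 0$ --- instead apply Lemma~\ref{desigparcial} for $0 < m < \ell$ and handle the boundary $m = \ell$ by a direct computation of $(d_1 - \ell)\,\delta_{\mathcal{X}_{\widehat{1}}}(0)$; this is a harmless and arguably cleaner detour around a notational gap in the paper.
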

%
%
\begin{proof}
Let $f \in C_\mathcal{X}(d)$ be such that $| f | = \delta_{\mathcal{X}} (d)$.
From  Corollary \ref{fatores} we get that  $f$ has a degree one factor
$h$ which is  $\mathcal{X}$-equivalent to $x_1$.
Let $m \le d = \ell$ be the  number of distinct elements $\alpha \in K_1$ such 
that 
$Z_\mathcal{X}(x_1 - \alpha) \subset Z_\mathcal{X}(f)$.

As $m \leq d$, from
Proposition \ref{minimal} (2) we have	
$|f| = \delta_{\mathcal{X}} (d)$ if and only if
$m=\ell$.
Now the result follows from Lemma \ref{divide2}.
\end{proof}

Now we
describe the minimal weight codewords  for the case where
$\ell = d_{k + 1} - 1$ and $0 \leq k < n$.

%
%
\begin{proposition}\label{pesominimo}
The minimal weight codewords of $C_\mathcal{X}(d)$, for 
	$d = \displaystyle \sum_{i=1}^{k+1} (d_i - 1)$, $0 \leq k < n$,
	are $\mathcal{X}$-equivalent to the functions of the form 
	$$
	\sigma \prod_{i=1}^{k+1} (1 - x_i^{d_i-1})\, ,
	$$
	with $\sigma \in \K^{*}$.	
\end{proposition}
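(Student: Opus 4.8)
The plan is to induct on $k$, proving the statement for every chain of nested fields at once. The easy half---that each $\sigma\prod_{i=1}^{k+1}(1-x_i^{d_i-1})$ is a minimal weight codeword---is a direct computation: its reduced polynomial has degree $\sum_{i=1}^{k+1}(d_i-1)=d$, and since $x_i^{d_i-1}$ equals $1$ on $K_i\setminus\{0\}$ and $0$ at $0$, each factor $1-x_i^{d_i-1}$ is the indicator of $\{x_i=0\}$; hence the product is supported exactly on $\{x_1=\cdots=x_{k+1}=0\}$, a set of $\prod_{i=k+2}^n d_i=\delta_\mathcal{X}(d)$ points. As elements of $\aff(\mathcal{X})$ and scalars in $\K^*$ preserve both degree and weight, every function of the stated form is indeed a minimal weight codeword, and it remains to prove the converse.

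For the base case $k=0$ we have $d=d_1-1<d_1$, so Proposition \ref{minimalcodewords1} gives that a minimal weight $f$ is $\mathcal{X}$-equivalent to $\sigma\prod_{i=1}^{d_1-1}(x_1-\alpha_i)$ with distinct $\alpha_i\in K_1$. This product vanishes off the single point $\alpha_0\in K_1\setminus\{\alpha_1,\ldots,\alpha_{d_1-1}\}$ and is a nonzero constant there, so it is a scalar multiple of the indicator of $\{x_1=\alpha_0\}$; translating $\alpha_0$ to $0$ (an element of $\aff(\mathcal{X})$, since $K_1$ is a field) turns it into a scalar multiple of $1-x_1^{d_1-1}$, as needed.

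For the inductive step $k\ge 1$, let $f$ be a minimal weight codeword. Since $|f|=\delta_\mathcal{X}(d)<\left(1+\frac{1}{d_{k+1}}\right)\delta_\mathcal{X}(d)$, Corollary \ref{fatores} furnishes a degree-one factor $\mathcal{X}$-equivalent to $x_{k+1}$; after composing $f$ with a suitable element of $\aff(\mathcal{X})$ I may assume this factor is $x_{k+1}$ itself. Applying Proposition \ref{minimal} with $j=k+1$ and using $\ell=d_{k+1}-1$, both alternatives of part (2) force $m=d_{k+1}-1$, so $f$ vanishes on $d_{k+1}-1$ of the $d_{k+1}$ hyperplanes $\{x_{k+1}=\alpha\}$ and is supported inside a single slice $\{x_{k+1}=\alpha_0\}$. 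Lemma \ref{divide2} then writes $f=g\prod_{\alpha\ne\alpha_0}(x_{k+1}-\alpha)$ with $g\in C_\mathcal{X}(d-m)$ and $d-m=\sum_{i=1}^k(d_i-1)$, while the equality clause of Proposition \ref{minimal} shows that the restriction $f_{\alpha_0}^{(k+1)}$ (in the notation of Definition \ref{f_alpha}) is a minimal weight codeword of $C_{\mathcal{X}_{\widehat{k+1}}}(d-m)$.

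Now I would invoke the inductive hypothesis: $\mathcal{X}_{\widehat{k+1}}$ is again a chain of nested fields, and $d-m=\sum_{i=1}^{(k-1)+1}(d_i-1)$ has exactly the required form with parameter $k-1$, so $f_{\alpha_0}^{(k+1)}$ is $\mathcal{X}_{\widehat{k+1}}$-equivalent to $\sigma'\prod_{i=1}^k(1-x_i^{d_i-1})$. I would then translate $\alpha_0$ to $0$ in the $x_{k+1}$-direction and lift the transformation realizing this equivalence to one in $\aff(\mathcal{X})$ acting as the identity on the $x_{k+1}$-coordinate; both preserve the slice $\{x_{k+1}=0\}$, so after these reductions $f$ is supported inside $\{x_{k+1}=0\}$ and coincides there with $\sigma'\prod_{i=1}^k(1-x_i^{d_i-1})$. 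Since $1-x_{k+1}^{d_{k+1}-1}$ is the indicator of $\{x_{k+1}=0\}$, the function $\sigma'\prod_{i=1}^{k+1}(1-x_i^{d_i-1})$ has the same support and the same restriction to that slice, and two functions of $C_\mathcal{X}$ supported in one slice and agreeing there must be equal; this yields $f=\sigma'\prod_{i=1}^{k+1}(1-x_i^{d_i-1})$ and closes the induction. The main obstacle I anticipate is exactly this last lifting: checking that $\mathcal{X}_{\widehat{k+1}}$ meets the inductive hypothesis and, above all, cleanly extending the $\aff(\mathcal{X}_{\widehat{k+1}})$-transformation to an element of $\aff(\mathcal{X})$ fixing the $(k+1)$-th coordinate, so that the support of $f$ stays inside the slice while its restriction is normalized to the product form.
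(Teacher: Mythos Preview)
Your argument is correct and follows essentially the same route as the paper: induction on $k$, base case via Proposition~\ref{minimalcodewords1}, then Corollary~\ref{fatores} and Proposition~\ref{minimal}(2) to force $m=d_{k+1}-1$, reduce to a single slice, and apply the inductive hypothesis on $\mathcal{X}_{\widehat{k+1}}$. The one place the paper differs from your write-up is exactly the step you flag as an obstacle: instead of restricting to $f_{\alpha_0}^{(k+1)}$ and then lifting an $\aff(\mathcal{X}_{\widehat{k+1}})$-map back to $\aff(\mathcal{X})$, the paper writes the reduced polynomial $Q$ of $g$ as $Q_1+X_{k+1}Q_2$ with $X_{k+1}\nmid Q_1$, observes that $P-(1-X_{k+1}^{d_{k+1}-1})Q_1\in I_\mathcal{X}$, and thereby replaces $g$ by a function $g_1$ that genuinely lives in $C_{\mathcal{X}_{\widehat{k+1}}}$ before invoking induction; the lifting you worry about is in any case routine (it is carried out explicitly in the proof of Proposition~\ref{desigualdadeS}).
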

%
%
\begin{proof}
We will prove the result by induction on $k$, and we note that 
the case $k = 0$ is already covered  by Proposition \ref{minimalcodewords1}, so
 we assume $k > 0$ and that the result holds for $k - 1$. 

Let $f \in C_\mathcal{X}(d)$ be such that $| f | = \delta_{\mathcal{X}} (d)$.
From  Corollary \ref{fatores} we get that  $f$ has a degree one factor
$h$ such that  
$h = x_{k+1} \circ \varphi$, for some
$\varphi \in \aff(\X)$.
Let  $m > 0$ be the number of $\alpha \in K_{k + 1}$ 
such that 
$Z_\mathcal{X}(h - \alpha) \subset Z_\mathcal{X}(f)$.
From Proposition  \ref{minimal} (2) we get 
$m=d_{k+1}-1$ (since $\ell = d_{k+1}$).
In particular $f_{\alpha}^{(k+1)}\neq 0$ for only one value of 
$\alpha \in K_{k+1}$, and without loss of generality, we may assume
that $\varphi$ is the  identity transformation and  $\alpha=0$.
Hence, from  Lemma \ref{divide2} we get 
$$
f = (1 - x_{k+1}^{d_{k+1} - 1}) g\, ,
$$
for some $g \in C_\mathcal{X}(d - (d_{k+1} - 1))$.
Let $P$ and $Q$ be the reduced polynomials associated to  $f$ and $g$, 
respectively. Then 
$$
P - (1 - X_{k+1}^{d_{k+1} - 1})Q
$$
is in the ideal $I_\X = (X_1^{d_1} - X_1, \ldots, X_n^{d_n} - X_n )$. Write 
$Q = Q_1 + X_{k+1}Q_2$, where $Q_1$ and $Q_2$ are reduced polynomials 
and    
$X_{k+1}$ does not appear in any monomial of 
 $Q_1$. Then $P - (1 - X_{k+1}^{d_{k+1} - 1})Q_1$
is in $I_\X$, and 
writing $g_1$ for the function associated to $Q_1$, we get
$f = (1 - x_{k+1}^{d_{k+1} - 1}) g_1$. Since 
$\deg(Q_1) = d - (d_{k+1}-1)$ we  have
 $g_1 \in C_{\mathcal{X}_{\widehat{k+1}}} 
(d - (d_{k+1} - 1))$, and from 
$d - (d_{k+1} - 1) = \displaystyle \sum_{i=1}^{k} (d_i - 1)$,   
$\delta_\mathcal{X} (d)
	= \delta_{\mathcal{X}_{\widehat{k + 1}}} (d - (d_{k+1} - 1))$
and $| f | = | g_1 |$ we see that $g_1$ is a minimal weight codeword of 	
$C_{\mathcal{X}_{\widehat{k+1}}} 
(d - (d_{k+1} - 1))$
so we may apply 
the induction hypothesis to $g_1$, which concludes the proof of the 
Proposition.
%
%
\end{proof}

%
%

%
%
\begin{lemma}\label{soma1}
Let $d=\displaystyle \sum_{i=1}^{k+1} (d_i - 1)$, $0 \leq k < n$
and let $g \in C_{\mathcal{X}}(d)$ be
	such that $|g|=\delta_{\mathcal{X}}(d)$. Let 
	$h \in C_{\mathcal{X}}(d-s)$, where $0 < s \le d_1 - 1$.
	If $f=g+h$ then $|f| \ge (s+1) \delta_{\mathcal{X}}(d)$ or
	$|f| = s\delta_{\mathcal{X}}(d)$.
	From the above Proposition there exists
	$\varphi \in \aff(\mathcal{X})$
	such that $g \circ \varphi^{-1} = 
	\sigma \prod_{i=1}^{k+1} (1 - x_i^{d_i-1})\, ,
	$
with $\sigma \in \K^{*}$.
Let $\widehat{f} = f\circ \varphi^{-1}$,
	 if
	$|f|=s \delta_{\mathcal{X}}(d)$
	then, for each $1 \le j \le k+1$, 
	the number of elements $\alpha \in K_j$
	such that
	$Z_\mathcal{X}(x_j - \alpha) \subset Z_\mathcal{X}(\widehat{f})$
	is either 
	$d_j - 1$ or $d_j - s$.
\end{lemma}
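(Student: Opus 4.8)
The plan is to reduce to the normal form, isolate a clean lower bound $|f|\ge s\,\delta_{\mathcal{X}}(d)$, and then handle the dichotomy and the counting by an induction on $n$ that slices along a variable not occurring in $g$. Using the automorphism $\varphi\in\aff(\mathcal{X})$ furnished by Proposition \ref{pesominimo}, I would replace $g,h,f$ by $\widehat{g}=g\circ\varphi^{-1}=\sigma\prod_{i=1}^{k+1}(1-x_i^{d_i-1})$, $\widehat{h}=h\circ\varphi^{-1}$, and $\widehat{f}=\widehat{g}+\widehat{h}$; since $|f|=|\widehat{f}|$ and $\deg\widehat{h}=\deg h\le d-s$ by Lemma \ref{grau}, nothing is lost. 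Two facts are recorded for repeated use: the support of $\widehat{g}$ is the $\mathcal{X}$-affine subspace $A=\{x_1=\cdots=x_{k+1}=0\}$ of size $\delta_{\mathcal{X}}(d)=\prod_{i=k+2}^n d_i$, and $\delta_{\mathcal{X}}(d-s)=(s+1)\,\delta_{\mathcal{X}}(d)$, which follows from Theorem \ref{min-dist} after writing $d-s=\sum_{i=1}^{k}(d_i-1)+(d_{k+1}-1-s)$ with $0\le d_{k+1}-1-s$.

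For the lower bound I would slice along $x_1$. For $\alpha\neq 0$ one has $\widehat{g}_{\alpha}^{(1)}=0$, hence $\widehat{f}_{\alpha}^{(1)}=\widehat{h}_{\alpha}^{(1)}$; for $\alpha=0$, the word $\widehat{g}_{0}^{(1)}=\sigma\prod_{i=2}^{k+1}(1-x_i^{d_i-1})$ has support of size $\delta_{\mathcal{X}}(d)$, so the elementary inequality $|u+v|\ge |v|-|u|$ for supports gives $|\widehat{f}_{0}^{(1)}|=|\widehat{g}_{0}^{(1)}+\widehat{h}_{0}^{(1)}|\ge |\widehat{h}_{0}^{(1)}|-\delta_{\mathcal{X}}(d)$. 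Summing over $\alpha\in K_1$, $|\widehat{f}|=|\widehat{f}_{0}^{(1)}|+\sum_{\alpha\neq 0}|\widehat{h}_{\alpha}^{(1)}|\ge |\widehat{h}|-\delta_{\mathcal{X}}(d)\ge (s+1)\delta_{\mathcal{X}}(d)-\delta_{\mathcal{X}}(d)=s\,\delta_{\mathcal{X}}(d)$, where $|\widehat{h}|\ge \delta_{\mathcal{X}}(d-s)=(s+1)\delta_{\mathcal{X}}(d)$ by Theorem \ref{min-dist}. This establishes the inequality in the first alternative and identifies $s\,\delta_{\mathcal{X}}(d)$ as the least possible weight.

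The gap is the main obstacle, and I would prove it by induction on $n$ together with the counting statement. When $k+1=n$ we have $\delta_{\mathcal{X}}(d)=1$ and $\widehat{g}$ is supported at the origin alone, so $|\widehat{f}|=|\widehat{h}|+\varepsilon$ with $\varepsilon\in\{-1,0,1\}$ and $|\widehat{h}|\ge s+1$; thus $|\widehat{f}|\ge s$, with equality exactly when $|\widehat{h}|=s+1$ and $\widehat{h}(\mathbf{0})=-\sigma$. When $k+1<n$ I would slice along the free variable $x_n$: here $\widehat{g}_{\beta}^{(n)}=\widehat{g}$ remains a minimal-weight word of product form in $C_{\mathcal{X}_{\widehat{n}}}(d)$ with $\delta_{\mathcal{X}_{\widehat{n}}}(d)=\delta_{\mathcal{X}}(d)/d_n$, so the inductive dichotomy applies to each slice $\widehat{f}_{\beta}^{(n)}=\widehat{g}+\widehat{h}_{\beta}^{(n)}$ on which $\widehat{h}_{\beta}^{(n)}\neq 0$, while the slices where $\widehat{h}_{\beta}^{(n)}$ vanishes are few because $\prod_{\beta}(x_n-\beta)$ then divides $\widehat{h}$ (Lemma \ref{divide2}). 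The crux is to show that the $d_n$ slice-weights cannot sum to a value strictly between $s\,\delta_{\mathcal{X}}(d)$ and $(s+1)\delta_{\mathcal{X}}(d)$; this synchronization — forcing either every slice to attain exactly $s\,\delta_{\mathcal{X}_{\widehat{n}}}(d)$ or the total to jump by a full $\delta_{\mathcal{X}}(d)$ — is where the minimality extracted in the previous paragraph and the numeric Lemmas \ref{numericoj} and \ref{numericok} enter, and where the bookkeeping on the perturbation codegrees across the recursion must be carried with care.

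Finally, assuming $|f|=s\,\delta_{\mathcal{X}}(d)$, the counting statement is handled by fixing $1\le j\le k+1$ and setting $m_j=\#\{\alpha\in K_j\mid Z_{\mathcal{X}}(x_j-\alpha)\subset Z_{\mathcal{X}}(\widehat{f})\}$. Lemma \ref{desig_dist} gives $s\,\delta_{\mathcal{X}}(d)=|\widehat{f}|\ge (d_j-m_j)\,\delta_{\mathcal{X}_{\widehat{j}}}(d-m_j)$, and evaluating $\delta_{\mathcal{X}_{\widehat{j}}}(d-m_j)$ through Theorem \ref{min-dist} confines $m_j$ to the range $d_j-s\le m_j\le d_j-1$. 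The assertion that only the endpoints $m_j=d_j-1$ and $m_j=d_j-s$ occur is then read off from the rigid structure produced in the equality case of the induction: each coordinate direction among the first $k+1$ is either left untouched by the perturbation (giving $m_j=d_j-1$) or fully spread across $s$ distinct hyperplanes (giving $m_j=d_j-s$), the intermediate spreads being precisely what the slice-synchronization excludes. I expect this last rigidity — and keeping the excluded slices and perturbation degrees under control through the free-variable recursion — to be the part demanding the most care.
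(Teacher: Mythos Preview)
Your lower bound $|f|\ge s\,\delta_{\mathcal X}(d)$ via the support inequality $|u+v|\ge |v|-|u|$ is correct and clean, and the base case $k+1=n$ (where $\delta_{\mathcal X}(d)=1$ and the dichotomy is trivially integer arithmetic) is fine. The genuine gap is in your inductive step.

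You slice along the \emph{free} variable $x_n$, so that $\widehat g_\beta^{(n)}=\widehat g$ on every slice and the inductive dichotomy applies slice by slice. But this does not close: writing $\delta'=\delta_{\mathcal X_{\widehat n}}(d)=\delta_{\mathcal X}(d)/d_n$, if $a$ slices land in the equality case $|f_\beta^{(n)}|=s\delta'$ and $b=d_n-a>0$ slices land in the case $|f_\beta^{(n)}|\ge(s+1)\delta'$, the sum is $\ge (sd_n+b)\delta'$, which for $0<b<d_n$ lies strictly between $s\,\delta_{\mathcal X}(d)$ and $(s+1)\,\delta_{\mathcal X}(d)$. Nothing in your outline excludes this mixed configuration; neither the lower bound from the previous paragraph nor Lemmas~\ref{numericoj}--\ref{numericok} (which concern the \emph{bound} variables $x_j$, $j\le k+1$, and the threshold $\delta_{\mathcal X}(d)$ rather than $s\,\delta_{\mathcal X}(d)$) give any mechanism forcing the slice weights to be equal. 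The ``synchronization'' you defer to bookkeeping is in fact the entire content of the lemma and does not follow from slicing on a variable absent from $g$. Slices with $\widehat h_\beta^{(n)}=0$ make things worse: they contribute $\delta'$ each, and Lemma~\ref{divide2} only tells you their number is at most $d-s$, which is far too weak.

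The paper's proof avoids this by slicing along a \emph{bound} variable $x_j$ with $j\le k+1$. Then $g_\alpha^{(j)}=0$ for every $\alpha\neq 0$, so $f_\alpha^{(j)}=h_\alpha^{(j)}$ there and one controls $\sum_{\alpha\neq 0}|h_\alpha^{(j)}|$ directly via the number $m$ of hyperplanes $x_j=\alpha$ in $Z_{\mathcal X}(h)$; the single remaining slice $\alpha=0$ carries $g_0^{(j)}+h_0^{(j)}$, to which the induction hypothesis (now with $k$ decreased by one) applies. The case analysis on $m$ and on whether $f_0^{(j)}$ or $h_0^{(j)}$ vanishes is what produces both the dichotomy and, in the equality case, the exact count $m\in\{d_j-1,\,d_j-s\}$. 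Your final paragraph's appeal to Lemma~\ref{desig_dist} only gives an inequality constraining $m_j$; it does not by itself force $m_j$ to an endpoint, and you again fall back on the unproved synchronization to finish. To repair the argument you should redirect the induction to slice along some $x_j$ with $j\le k+1$, as the paper does.
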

%
%
\begin{proof}
As in the proof of Lemma \ref{desig_dist} we may assume that $\varphi$ is the 
identity transformation, so we identify $\widehat{f}$ with $f$ and $g \circ 
\varphi^{-1}$ with $g$.

We will make an induction on $n$. 
	If $n=1$ then  $k=0$,  $d =d_1 - 1$, $j=1$ and $|g|=1$. Since
	$h \in C_{\mathcal{X}}(d_1-(s+1))$ and $ |K_1| = d_1$ we have  
	 $|h| \ge s+1$, and a fortiori 
	 $|f|\ge s$. 
	 If $|f|=s$ then there are $d_1 - s$ elements $\alpha \in K_1$ such that
	 $Z_\mathcal{X}(x_1 - \alpha) \subset Z_\mathcal{X}(f)$.
	
	We will do an induction on $n$, so we assume that the result is true for $n-1$ and let $j \in \{1, \ldots, k+1\}$.
From the hypothesis on $g$ 
and using the notation established in Definition \ref{f_alpha} 
we may write 
	$g=(1 - x_j^{d_j-1})g^{(j)}_0$, where
	$g^{(j)}_0 \in C_{\mathcal{X}_{\widehat{j}}}(d-(d_j - 1))$ is a function of  minimal weight.
We also write 
	$$
	|f| = \sum_{\alpha \in K_j} |f^{(j)}_\alpha| =
	|g^{(j)}_0 + h^{(j)}_0| + \sum_{\alpha \in K_j^*} |h^{(j)}_\alpha|.
	$$
	
	Let's assume that $h^{(j)}_0=0$, since  
	$\delta_{\mathcal{X}_{\widehat{j}}}(d - (d_j-1)) = \prod_{i=k+2}^{n} d_i = \delta_{\mathcal{X}}(d) $  and 
	$  \delta_{\mathcal{X}}(d - s) = (d_{k+1} - (d_{k+1} - 1 - s)) \prod_{i=k+2}^{n} d_i  $ 
	we get
	$$
	|f|
	= |g^{(j)}_0 | + |h| \ge
	\delta_{\mathcal{X}_{\widehat{j}}}(d - (d_j-1)) + \delta_{\mathcal{X}}(d - s)
	=
	(s+2)\delta_{\mathcal{X}}(d)
	$$
	which proves the Lemma in this case.
	Assume now that $h^{(j)}_0 \neq 0$, and let 
	$m$ 
	be the number of elements  $\alpha \in K_j$
	such that
	$Z_\mathcal{X}(x_j - \alpha) \subset Z_\mathcal{X}(h)$.
	
	Let's assume that  
	$f^{(j)}_0 \neq 0$, in this case  
	$m$ is also the number of elements  $\alpha \in K_j$
	such that 
	$Z_\mathcal{X}(x_j - \alpha) \subset Z_\mathcal{X}(f)$
	since $g=(1 - x_j^{d_j-1})g^{(j)}_0$.
	If $m=d_j-1$ then from Lemma \ref{divide2} we have $h=(1 - 
	x_j^{d_j-1})\widetilde{h}$, with $\widetilde{h} \in 
	C_{\mathcal{X}}(d-(d_j - 1) -s)$.
	As in the end of the proof of Proposition \ref{pesominimo} we may assume 
	that $\widetilde{h} \in C_{\mathcal{X}_{\widehat{j}} } 
	(d - (d_{j} - 1) - s) $  so that 
	$\widetilde{h} =  h^{(j)}_0$. We now apply the induction hypothesis 
	to $f^{(j)}_0 =  g^{(j)}_0 + h^{(j)}_0$ and we get 
	\[
	| f^{(j)}_0  | \geq (s + 1) \delta_{\mathcal{X}_{\widehat{j}}}(d - 
	(d_j-1)) 
	\textrm{ or }
	| f^{(j)}_0  | = s  \delta_{\mathcal{X}_{\widehat{j}}}(d - (d_j-1)).
	\]
If $| f^{(j)}_0  | = s  \delta_{\mathcal{X}_{\widehat{j}}}(d - (d_j-1))$ then, 
from the induction hypothesis, we get that for $i \neq j$ there are $d_i - 1$ 
or $d_i - s$ values of $\alpha \in K_i$ such that  	
$Z_{\mathcal{X}_{\widehat{j}}}(x_j - \alpha) \subset 
Z_{\mathcal{X}_{\widehat{j}}}(f^{(j)}_0)$
and from $f = g + h = (1 - x_j^{d_j-1})g^{(j)}_0 + (1 - x_j^{d_j-1})h^{(j)}_0 = 
(1 - x_j^{d_j-1})f^{(j)}_0$ we get the statement of the Lemma for the case 
where $h^{(j)}_0 \neq 0$, $f^{(j)}_0 \neq 0$ and $m=d_j-1$. Still assuming that
$h^{(j)}_0 \neq 0$ and $f^{(j)}_0 \neq 0$, we now treat the case where   
$0 \le m < d_j -1$. From Lemma \ref{divide2} we know that 
$h = \prod_{i=1}^{m} (x_j - \alpha_i) \widetilde{h}$, where $\alpha_1, \ldots, 
\alpha_m \in K_j^*$ and $\widetilde{h} \in 
C_{\mathcal{X}}(d - s - m)$ so 	$h^{(j)}_0 = \beta \widetilde{h}^{(j)}_0$, 
with $\beta \in K_j^*$ and we get $h^{(j)}_0 \in  
C_{\mathcal{X}_{\widehat{j}}}(d - s - m)$ (note that we also get 
$h^{(j)}_\alpha \in  
C_{\mathcal{X}_{\widehat{j}}}(d - s - m)$ for all $\alpha \in K^*_j \setminus 
\{\alpha_1,\ldots, \alpha_m\}$). Thus, from 
$f^{(j)}_0 =  g^{(j)}_0 + 
h^{(j)}_0$ we get that the degree of $f^{(j)}_0$ is at most   
$\max \{ d - (d_j - 1) , d - (s+m)\}$. We now consider the following cases.	
%
	\begin{enumerate}
		\item Assume that $d_j - 1 < s + m$, so we have that the degree of   
		$ f^{(j)}_0 = g^{(j)}_0 + h^{(j)}_0$ is at most $d - (d_j - 1)$. 
		From $h^{(j)}_0 \in  
		C_{\mathcal{X}_{\widehat{j}}}(d - (s + m))$ and writing 
		$d - (s + m) = d - (d_j - 1) - (s + m - (d_j - 1))$, we observe that
		$0 < s + m - (d_j - 1)  = s - (d_j - 1 - m ) < d_1 - 1$, so we may 
		apply the induction hypothesis on $f^{(j)}_0$ and we get, in 
		particular, that $|f^{(j)}_0| \geq (s+m - (d_j - 1)) 
		\delta_{\mathcal{X}_{\widehat{j}}}(d - (d_j - 1))
				=
				(s+m +1 - d_j) \delta_{\mathcal{X}}(d)$.
From $
	|f| = 
	|f^{(j)}_0| + \sum_{\alpha \in K_j^*} |h^{(j)}_\alpha|
	$
and the fact that $|h^{(j)}_\alpha| \geq \delta_{\mathcal{X}_{\widehat{j}}}(d - 
(s + m))$  for all $\alpha \in K^*_j \setminus 
\{\alpha_1,\ldots, \alpha_m\}$ we get 
		$|f|  \ge  (s+m +1 - d_j) \delta_{\mathcal{X}}(d) + 
		(d_j - m - 1)\delta_{\mathcal{X}_{\widehat{j}}}(d - (s+m))$. 
We claim that 
		$$
		\delta_{\mathcal{X}_{\widehat{j}}}(d - (s+m)) =
		(s + m - d_j + 2)
		\delta_{\mathcal{X}}(d),
		$$
and to prove this fact we have to consider the cases where $j \leq k$  and 
$j = k + 1$. We will do the case $j \leq k$ since the proof of the other case is 
similar to this one.		
So let $j \leq k$, then 
\[
d - (s + m) = \sum_{i = 1, i \neq j}^k  (d_i - 1) + (d_{k + 1} - 1 + d_j  - 1 - 
(s + m))
\]
so 
\[
\delta_{\mathcal{X}_{\widehat{j}}}(d - (s+m)) = 
(d_{k + 1} - ( d_{k + 1} - 1 + d_j  - 1 - 
(s + m)  )\prod_{i = k + 2}^n d_i = (s + m - d_j + 2) \delta_{\mathcal{X}}(d).
\]
Thus 
	\begin{eqnarray}
		|f| & \ge &  (s+m +1 - d_j) \delta_{\mathcal{X}}(d)
		+ (d_j - m - 1) (s + m - d_j + 2) \delta_{\mathcal{X}}(d)
		\nonumber \\
		&  =  & \left( 
		s + (d_j - m - 1) (s + m + 1 - d_j) 
		\right)  \delta_{\mathcal{X}}(d)
		\ge   ( s + 1 ) \delta_{\mathcal{X}}(d)
		\nonumber
		\end{eqnarray}
which proves the Lemma in this case.

		\item Assume  now that $d_j - 1 \ge s + m$, in this case 
		$\deg (g^{(j)}_0 + h^{(j)}_0) \le d - (s + m)$, and we have
		\begin{eqnarray}
		|f| & \ge & 
		(d_j - m)\delta_{\mathcal{X}_{\widehat{j}}}(d - (s+m)) =
		(d_j - m ) (d_{k+1} + s + m + 1 -d_j)
		\prod_{i=k+2}^n d_i
		\nonumber\\
		&  =  & \left( 
		(s+1)d_{k+1}+ (d_{k+1} + m - d_j) (d_j- 1 - s - m) 
		\right)  \prod_{i=k+2}^n d_i \ge (s+1) \delta_{\mathcal{X}}(d) \, .
		\nonumber
		\end{eqnarray}
	\end{enumerate}
	We now consider the case  $f^{(j)}_0 = g^{(j)}_0 + h^{(j)}_0 = 0$, so in particular 
	$\deg h^{(j)}_0 = \deg g^{(j)}_0 = d - (d_j -1)$. 
	On the other hand $\deg h_0^{(j)} \leq d - (s + m)$, so we get $s + m \leq d_j - 1$.
	Let $\lambda = ((-1)^m\prod_{i=1}^{m} \alpha_i)^{-1}$, then, using Lemma \ref{divide2} we get
	that there exists a function $\widehat{h}$ such that 
	\[
	h - \lambda \left( \prod_{i=1}^{m} (x_j - \alpha_i) \right) h_0^{(j)} = \lambda x_j \left( \prod_{i=1}^{m} (x_j - \alpha_i) \right) \widehat{h} 	
	\]
Observe that  $\deg (h - \lambda ( \prod_{i=1}^{m} (x_j - \alpha_i) ) h_0^{(j)} ) \leq d - s$ hence 
$\deg \widehat{h} \leq d - (s + m +1)$.
%
%
	
	Assume that $s+m < d_j - 1$ hence $s+m +1 \le d_j - 1$.
	From 
	\[
	h = \lambda \left( \prod_{i=1}^{m} (x_j - \alpha_i) \right) ( h_0^{(j)} +  x_j \widehat{h}) 
	\]
Recall that 
	$$
	|f| =
	 \sum_{\alpha \in K_j^*} |h^{(j)}_\alpha|
	$$
and $h^{(j)}_\alpha = \lambda \left( \prod_{i=1}^{m} (\alpha - \alpha_i) \right) ( h_0^{(j)} +  \alpha \widehat{h}^{(j)}_\alpha) \neq 0$ for $d_j - (m + 1)$ values of $\alpha \in K_j^{*}$. Observe that  
 $\deg h^{(j)}_0 =  d - (d_j - 1) \le d - (s+m +1)$ and
 since $\deg \widehat{h} \le d - (s+m +1)$ we get 
 $\deg h_\alpha^{(j)}\le d - (s + m +1) $, when $h_\alpha^{(j)}\neq 0$. Then  
	\begin{eqnarray*}
	|f| & \ge &  
	(d_j - (m + 1))\delta_{\mathcal{X}_{\widehat{j}}}(d - (s + m + 1)) \\
	 &	= &  (d_j - (m + 1)) (d_{k+2} - (d_j - (s + m +2)))
	\prod_{i=k+3}^n d_i
	\\
	&  =  & \left( 
	(s+1)d_{k+2}+ (d_j - (s + m + 2)) (d_{k+2} - d_j + m + 1)
	\right)  \prod_{i=k+3}^n d_i \ge (s+1) \delta_{\mathcal{X}}(d) .
	\nonumber
	\end{eqnarray*}
	If  $s+m = d_j - 1$ then $\deg h_\alpha^{(j)} = d - (d_j -1)$ whenever
	 $\alpha \in K_j^*$ and $h_\alpha^{(j)}\neq 0$. In this case
	$$
	|f| \ge
	(d_j - m - 1)\delta_{\mathcal{X}_{\widehat{j}}}(d - (d_j -1)) = s  \delta_{\mathcal{X}}(d),
	$$
	and equality holds if and only if $|f_\alpha^{(j)}|=|h_\alpha^{(j)}| =
	\delta_{\mathcal{X}_{\widehat{j}}}(d - (d_j -1))$,
	for all $f_\alpha^{(j)}\neq 0$. Observe that in this case the number of elements
	 $\alpha \in K_j$
	such that 
	$Z_\mathcal{X}(x_j - \alpha) \subset Z_\mathcal{X}(f)$
	is 
	$m + 1 = d_j - s$.

	Still under the assumption that  $s+m = d_j - 1$ we must prove that 
	if $|f| > s  \delta_{\mathcal{X}}(d)$ then 
	$|f| \ge (s + 1)  \delta_{\mathcal{X}}(d)$.  From the above reasoning we know that if
	$|f| > s  \delta_{\mathcal{X}}(d)$ then there exists 
	$\alpha \in K_j^*$ such that  $h_\alpha^{(j)} \neq 0$ and  
	$|h_\alpha^{(j)}| > \delta_{\mathcal{X}_{\widehat{j}}}(d - (d_j -1))
	= \delta_{\mathcal{X}}(d)$. We recall that 
	\[
	h^{(j)}_\alpha = \lambda \left( \prod_{i=1}^{m} (\alpha - \alpha_i) \right) ( h_0^{(j)} +  \alpha \widehat{h}^{(j)}_\alpha), 
	\] 
	 that  $h_0^{(j)} = - g_0^{(j)}$ is a function, or codeword, of minimal 
	 weight  in
	$C_{\mathcal{X}_{\widehat{j}}}(d - (d_j - 1))$ and that 
	$\deg( \widehat{h}_\alpha^{(j)}) \leq d - (s + m + 1) = d - (d_j - 1) - 1$. 
	From the induction hypothesis, with $s = 1$, we get from
	$|h_\alpha^{(j)}| > \delta_{\mathcal{X}_{\widehat{j}}}(d - (d_j -1))$ 
	that 
	$|h_\alpha^{(j)}| \ge 2 \delta_{\mathcal{X}_{\widehat{j}}}(d - (d_j -1))$.
	Hence, from $
		|f| =
		 \sum_{\alpha \in K_j^*} |h^{(j)}_\alpha|
		$ we get 
	$$
	|f| \ge
	(d_j - m - 2)\delta_{\mathcal{X}_{\widehat{j}}}(d - (d_j -1)) +
	2 \delta_{\mathcal{X}_{\widehat{j}}}(d - (d_j -1)) =
	(s+1) \delta_{\mathcal{X}}(d)\, ,
	$$
	which completes the proof of the Lemma.
\end{proof}
\begin{lemma} \label{poligual}
	Let  $f \in C_{\mathcal{X}}(d)$, where
	$d=\displaystyle \sum_{i=2}^{k+1} (d_i - 1)$, $1 \leq k < n$. If 
	there exist $\alpha_1,\alpha_2 \in K_1$, $\alpha_1 \neq \alpha_2$,
	$|f_{\alpha_1}^{(1)}|=|f_{\alpha_2}^{(1)}| =
	\delta_{\mathcal{X}_{\widehat{1}}}(d)$ then there
	exists $\varphi \in \aff(\mathcal{X})$ such that $x_1 = x_1\circ \varphi$
	and 
	$g_{\alpha_1}^{(1)}= g_{\alpha_2}^{(1)}$,
	where $g=f \circ \varphi$.
\end{lemma}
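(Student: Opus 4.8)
The plan is to determine the two slices $f_{\alpha_1}^{(1)},f_{\alpha_2}^{(1)}\in C_{\mathcal{X}_{\widehat{1}}}(d)$ geometrically and then to realize $\varphi$ as a shear in the $x_1$-direction. First I would observe that for the code $C_{\mathcal{X}_{\widehat{1}}}(d)$ the exponent $d=\sum_{i=2}^{k+1}(d_i-1)$ is exactly of the form treated in Proposition~\ref{pesominimo} (with $k-1$ playing the role of its $k$); hence each minimal-weight slice $f_{\alpha_t}^{(1)}$ is $\mathcal{X}_{\widehat{1}}$-equivalent to $\sigma\prod_{i=2}^{k+1}(1-x_i^{d_i-1})$, so it takes a single value $\sigma_t\in\K^{*}$ on an $\mathcal{X}_{\widehat{1}}$-affine subspace $S_t$ of dimension $n-1-k$ and vanishes off $S_t$, with $|S_t|=\delta:=\delta_{\mathcal{X}_{\widehat{1}}}(d)=\prod_{i=k+2}^{n}d_i$. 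Writing $\mathbf{1}_{S}$ for the indicator function of a set $S$, this reads $f_{\alpha_t}^{(1)}=\sigma_t\,\mathbf{1}_{S_t}$. A second, purely formal, observation: expanding the reduced polynomial of any $F\in C_{\mathcal{X}}(d)$ as $\sum_{t=0}^{d_1-1}X_1^{t}P_t$ with $\deg P_t\le d-t$, only $P_0$ reaches degree $d$, so all slices $F_\alpha^{(1)}$ share the same degree-$d$ part and $F_{\alpha}^{(1)}-F_{\alpha'}^{(1)}$ has degree at most $d-1$ for all $\alpha,\alpha'$.

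The hard part is to prove that $S_1$ and $S_2$ are parallel, i.e.\ that their direction spaces $\vec{S}_1,\vec{S}_2\subset\K^{\,n-1}$ coincide; this is exactly where the bound $\deg f\le d$ enters. I would argue by contradiction. For $\boldsymbol{v}\in\mathcal{X}_{\widehat{1}}$ (an additive group, so that $\boldsymbol{y}\mapsto\boldsymbol{y}+\boldsymbol{v}$ preserves $\mathcal{X}_{\widehat{1}}$) set $\tilde f(x_1,\boldsymbol{y})=f(x_1,\boldsymbol{y}+\boldsymbol{v})-f(x_1,\boldsymbol{y})$; then $\tilde f\in C_{\mathcal{X}}(d-1)$ and, on the slices $t=1,2$, one has $\tilde f_{\alpha_t}^{(1)}=\sigma_t(\mathbf{1}_{S_t-\boldsymbol{v}}-\mathbf{1}_{S_t})$, which vanishes precisely when $\boldsymbol{v}\in\vec{S}_t$. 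Since $\vec{S}_t$ is an $\K$-space of dimension $n-1-k$ spanned by $\vec{S}_t\cap\mathcal{X}_{\widehat{1}}$, if $\vec{S}_1\neq\vec{S}_2$ I may choose $\boldsymbol{v}\in(\vec{S}_1\cap\mathcal{X}_{\widehat{1}})\setminus\vec{S}_2$. Then $\tilde f_{\alpha_1}^{(1)}=0$ gives $Z_{\mathcal{X}}(x_1-\alpha_1)\subset Z_{\mathcal{X}}(\tilde f)$, so by Lemma~\ref{divide2} (and Lemma~\ref{divide}) $\tilde f=(x_1-\alpha_1)\tilde g$ with $\tilde g\in C_{\mathcal{X}}(d-2)$; whereas $\tilde f_{\alpha_2}^{(1)}$ is a nonzero scalar multiple of $\tilde g_{\alpha_2}^{(1)}$ of weight $2\delta$, because $S_2$ and $S_2-\boldsymbol{v}$ are disjoint translates of $\vec{S}_2$. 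Hence $2\delta\ge\delta_{\mathcal{X}_{\widehat{1}}}(d-2)$, contradicting the inequality $\delta_{\mathcal{X}_{\widehat{1}}}(d-2)\ge 3\delta$, which I would get from Theorem~\ref{min-dist} by treating $d_{k+1}\ge 4$, $d_{k+1}=3$ and $d_{k+1}=2$ separately (the tiny ranges $d\le 2$, read with $\delta_{\mathcal{X}_{\widehat{1}}}(0)=|\mathcal{X}_{\widehat{1}}|$, being immediate). Therefore $\vec{S}_1=\vec{S}_2=:\vec{S}$.

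Finally I would build $\varphi$ explicitly. Write $S_2=S_1+\boldsymbol{w}$ with $\boldsymbol{w}\in\mathcal{X}_{\widehat{1}}$ and put $\boldsymbol{c}=(\alpha_2-\alpha_1)^{-1}\boldsymbol{w}$; this lies in $\mathcal{X}_{\widehat{1}}$ because $(\alpha_2-\alpha_1)^{-1}\in K_1^{*}\subset K_i^{*}$ for every $i$. The shear $\varphi(x_1,\boldsymbol{y})=(x_1,\boldsymbol{y}+x_1\boldsymbol{c})$ then belongs to $\aff(\mathcal{X})$ and satisfies $x_1\circ\varphi=x_1$; on the slice $x_1=\alpha$ it acts as translation by $\alpha\boldsymbol{c}$, so $g=f\circ\varphi$ has $g_{\alpha_t}^{(1)}=\sigma_t\,\mathbf{1}_{S_t-\alpha_t\boldsymbol{c}}$, and the choice of $\boldsymbol{c}$ forces $S_1-\alpha_1\boldsymbol{c}=S_2-\alpha_2\boldsymbol{c}=:S'$. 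Applying the degree observation of the first paragraph to $g$ (note $\deg g=\deg f\le d$ by Lemma~\ref{grau}) shows $g_{\alpha_1}^{(1)}-g_{\alpha_2}^{(1)}=(\sigma_1-\sigma_2)\mathbf{1}_{S'}$ has degree at most $d-1$; were it nonzero it would be a word of $C_{\mathcal{X}_{\widehat{1}}}(d-1)$ of weight $\delta$, contradicting $\delta_{\mathcal{X}_{\widehat{1}}}(d-1)=2\delta$ from Theorem~\ref{min-dist}. Hence $\sigma_1=\sigma_2$ and $g_{\alpha_1}^{(1)}=g_{\alpha_2}^{(1)}$, which is the assertion. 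The one genuinely delicate point, and the step I expect to absorb most of the work, is the parallelism argument of the second paragraph, together with the careful min-distance bookkeeping $\delta_{\mathcal{X}_{\widehat{1}}}(d-2)\ge 3\delta$.
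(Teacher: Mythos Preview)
Your proof is correct but takes a genuinely different route from the paper.

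The paper first normalizes so that $f_{\alpha_1}^{(1)}=\sigma\prod_{i=2}^{k+1}(1-x_i^{d_i-1})$, writes $f_{\alpha_2}^{(1)}=f_{\alpha_1}^{(1)}+(\alpha_2-\alpha_1)\hat f_{\alpha_2}^{(1)}$ with $\hat f_{\alpha_2}^{(1)}\in C_{\mathcal{X}_{\widehat{1}}}(d-1)$, and then invokes Lemma~\ref{soma1} (with $s=1$) to conclude that for every $2\le j\le k+1$ there are exactly $d_j-1$ values of $\alpha\in K_j$ with $Z_{\mathcal{X}_{\widehat{1}}}(x_j-\alpha)\subset Z_{\mathcal{X}_{\widehat{1}}}(f_{\alpha_2}^{(1)})$. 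This pins down $f_{\alpha_2}^{(1)}=\tau\prod_{i=2}^{k+1}(1-(x_i-\beta_i)^{d_i-1})$ in the \emph{same} coordinate axes, so that the support of $f_{\alpha_2}^{(1)}$ is automatically a translate of that of $f_{\alpha_1}^{(1)}$; the degree observation then forces $\tau=\sigma$, and a shear $x_j\mapsto x_j+\beta_j(x_1-\alpha_1)/(\alpha_2-\alpha_1)$ finishes.

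You bypass Lemma~\ref{soma1} entirely. Starting from the same description of the slices as $\sigma_t\mathbf{1}_{S_t}$ supplied by Proposition~\ref{pesominimo}, you prove the parallelism $\vec S_1=\vec S_2$ directly via a discrete derivative: the translate--minus--original trick drops the degree by one, the vanishing on the $\alpha_1$-slice drops it by one more (Lemma~\ref{divide2}), and then the explicit value $\delta_{\mathcal{X}_{\widehat{1}}}(d-2)\ge 3\delta$ rules out the weight-$2\delta$ slice at $\alpha_2$. Your shear and your final $\sigma_1=\sigma_2$ argument (via $\delta_{\mathcal{X}_{\widehat{1}}}(d-1)=2\delta$) are the same in spirit as the paper's. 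The paper's path is natural within its architecture because Lemma~\ref{soma1} is needed anyway in Theorem~\ref{pesominimofinal}; your path is more self-contained and geometrically transparent for this lemma in isolation, at the price of the small case analysis for $\delta_{\mathcal{X}_{\widehat{1}}}(d-2)$ you already flagged. Both are valid.
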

%
%
\begin{proof}
	From Proposition \ref{pesominimo} we may assume without loss of generality that
	$$
	f_{\alpha_1}^{(1)} = \sigma 
	\prod_{i=2}^{k+1} \left( 1 - x_i^{d_i -1} \right) \, ,
	$$
	with $\sigma \in \mathbb{F}_q^*$.
	Since $f \in C_{\mathcal{X}}(d)$ there exists
	$\hat{f} \in C_{\mathcal{X}}(d-1)$ such that
	$f = f_{\alpha_1}^{(1)}  + (x_1 - \alpha_1) \hat{f}$
	so that
	$f_{\alpha_2}^{(1)} = 
	f_{\alpha_1}^{(1)}  + (\alpha_2 - \alpha_1) \hat{f}_{\alpha_2}^{(1)}$. Since
	$|f_{\alpha_2}^{(1)}| = \delta_{\mathcal{X}_{\widehat{1}}}(d)$,
	we get from Lemma \ref{soma1} (with $s=1$) that
	for each $2 \leq j \leq k + 1$ the number of elements $\alpha \in K_j$ such that 
	$Z_{\mathcal{X}_{\widehat{1}}}(x_j - \alpha) \subset
	Z_{\mathcal{X}_{\widehat{1}}}(f_{\alpha_2}^{(1)})$
	is
	$d_j - 1$. Thus for each $2 \leq j \leq k + 1$ there exists
	$\beta_j \in K_j$ such that $f_{\alpha_2}^{(1)}$ is a multiple of
	$
	\prod_{\alpha \in K_j \setminus \{\beta_j\}} (x_j - \alpha)$. From the equality of
	the reduced polynomials 
	\[
	\prod_{\alpha \in K_j \setminus \{\beta_j\}} (X_j - \alpha) = (X_j - \beta_j)^{d_j - 1} - 1
	\] 	
	we get, by successively applications of Lemma \ref{divide2}, that
	$$
	f_{\alpha_2}^{(1)} = \tau
	\prod_{i=2}^{k+1} \left( 1 - (x_i - \beta_i)^{d_i -1} \right)
	$$
	for some $\tau \in \mathbb{F}_q^*$.
	Observe that from 
	$(\alpha_2 - \alpha_1) \hat{f}_{\alpha_2}^{(1)} = f_{\alpha_2}^{(1)} - 
		f_{\alpha_1}^{(1)}  $
	and 	$\hat{f} \in C_{\mathcal{X}}(d-1)$
	we must have $\tau = \sigma$.   
	If $\beta_j=0$ for all $2 \le j \le k+1$ then 
	$f_{\alpha_1}^{(1)} = f_{\alpha_2}^{(1)}$. Otherwise
		consider a function
	$\varphi \in \aff(\mathcal{X})$ such that $x_1 \circ \varphi=x_1$ and
	$$
	x_j \circ \varphi = x_j + \beta_j \frac{x_1 - \alpha_1}{\alpha_2 - \alpha_1}\, .
	$$
	for all  $2 \le j \le k+1$.
	Let 
	$g = f\circ \varphi$, if $x_1 = \alpha_1$ then
    $x_j \circ \varphi = x_j$, and if  
	$x_1 = \alpha_2$ then 
	$x_j \circ \varphi = x_j  + \beta_j$ for all 
	$2 \le j \le k+1$. Thus
	\[
			g_{\alpha_1}^{(1)} = (f \circ \varphi)_{\alpha_1}^{(1)} = \sigma 
		\prod_{i=2}^{k+1} \left( 1 - ((x_i \circ \varphi)_{\alpha_1}^{(1)} )^{d_i -1} \right) = f_{\alpha_1}^{(1)}  ,	
	\]
	and
	\[
				g_{\alpha_2}^{(1)} = (f \circ \varphi)_{\alpha_2}^{(1)} = \sigma 
			\prod_{i=2}^{k+1} \left( 1 - ((x_i \circ \varphi)_{\alpha_2}^{(1)} - \beta_i)^{d_i -1} \right) = f_{\alpha_1}^{(1)}  ,	
		\]
		hence
	$g_{\alpha_1}^{(1)}=  g_{\alpha_2}^{(1)}$.
\end{proof}

Now we prove the main result of this paper, which generalizes the theorem by 
Delsarte,
Goethals and Mac Williams on minimal weight codewords of $GRM_q(d,n)$ to 
the minimal weight codewords of $C_\mathcal{X}(d)$.

%
%
\begin{theorem}\label{pesominimofinal}
Let 
$d = \displaystyle \sum_{i=1}^{k} (d_i - 1)+ \ell $, $0\le  k < n$ and
$0 < \ell \leq d_{k+1} - 1$, 
the minimal weight codewords of 
 $C_\mathcal{X}(d)$
are $\mathcal{X}$-equivalent to the functions of the  form
$$
g = \sigma \prod_{i=1,i \neq j}^{k+1} (1 - x_i^{d_i-1})
\prod_{t=1}^{d_j - (d_{k+1}-\ell)} 
\left(
x_j - \alpha_t
\right)
\, ,
$$
for some $1 \le j \le k+1$ such that $d_{k+1}- \ell  \le d_j$,
where $0 \neq \sigma \in \K$ and $\alpha_1, \ldots, \alpha_{d_j - (d_{k+1}-\ell)}$
are disticnt elements of $K_j$ (if $d_j = d_{k+1}- \ell$ we take the second product as being equal to 1).
\end{theorem}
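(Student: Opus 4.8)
The plan is to argue by induction on $k$, the base cases $k=0$ and $\ell=d_{k+1}-1$ being exactly Proposition \ref{minimalcodewords1} and Proposition \ref{pesominimo}. First I would check that the displayed $g$ really is a minimal weight codeword: its degree equals $\sum_{i=1,i\neq j}^{k+1}(d_i-1)+\bigl(d_j-(d_{k+1}-\ell)\bigr)=d$, and its support is the set of points with $x_i=0$ for $i\le k+1$, $i\neq j$, and $x_j\notin\{\alpha_1,\dots,\alpha_{d_j-(d_{k+1}-\ell)}\}$, which number $(d_{k+1}-\ell)\prod_{i=k+2}^n d_i=\delta_{\mathcal{X}}(d)$. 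For the converse, take $f$ with $|f|=\delta_{\mathcal{X}}(d)$. Since $\delta_{\mathcal{X}}(d)<\bigl(1+\tfrac1{d_{k+1}}\bigr)\delta_{\mathcal{X}}(d)$, Corollary \ref{fatores} yields a degree-one factor $\mathcal{X}$-equivalent to $x_{k+1}$; composing with the appropriate $\varphi\in\aff(\mathcal{X})$, which preserves weight and degree by Lemma \ref{grau}, I may assume $x_{k+1}\mid f$. Applying Proposition \ref{minimal} with $j=k+1$, the number $m$ of $\alpha\in K_{k+1}$ with $Z_{\mathcal{X}}(x_{k+1}-\alpha)\subset Z_{\mathcal{X}}(f)$ is either $\ell$ or $d_{k+1}-1$ (the latter only when $d_k\ge d_{k+1}-\ell$). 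These two possibilities should produce, respectively, the $j\le k$ forms and the $j=k+1$ form.

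In the case $m=d_{k+1}-1$ there is a single nonzero slice $f^{(k+1)}_\beta$, and Lemma \ref{divide2} gives $f=\bigl(\prod_{\alpha\in K_{k+1}\setminus\{\beta\}}(x_{k+1}-\alpha)\bigr)\,g$. Using the reduced-polynomial identity $\prod_{\alpha\in K_{k+1}\setminus\{\beta\}}(X_{k+1}-\alpha)=(X_{k+1}-\beta)^{d_{k+1}-1}-1$ and a translation, $f$ becomes $\mathcal{X}$-equivalent to $(1-x_{k+1}^{d_{k+1}-1})\,\tilde g$ with $\tilde g=f^{(k+1)}_\beta$ a minimal weight codeword of $C_{\mathcal{X}_{\widehat{k+1}}}(d')$, where $d'=d-(d_{k+1}-1)=\sum_{i=1}^{k-1}(d_i-1)+\bigl(d_k-(d_{k+1}-\ell)\bigr)$. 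Since $d_k\ge d_{k+1}-\ell$ this is a legitimate instance with parameter $k-1<k$, so the induction hypothesis (or Proposition \ref{pesominimo} when $d_k=d_{k+1}-\ell$) describes $\tilde g$; multiplying by $(1-x_{k+1}^{d_{k+1}-1})$ converts the $\prod(1-x_i^{d_i-1})$ block into the full block over $i\le k+1$, $i\neq j$, and leaves the $\prod(x_j-\alpha_t)$ block untouched, yielding a $j\le k$ form.

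The case $m=\ell$ is where the real work lies. Here $f=\prod_{t=1}^{\ell}(x_{k+1}-\alpha_t)\,g$ with $g\in C_{\mathcal{X}}(d-\ell)$ and $d-\ell=\sum_{i=1}^k(d_i-1)$; comparing $|f|=\sum_{\alpha}|f^{(k+1)}_\alpha|$ with $\delta_{\mathcal{X}}(d)$ through Theorem \ref{min-dist} forces each of the $d_{k+1}-\ell$ nonzero slices $g^{(k+1)}_\alpha$ to be a minimal weight codeword of $C_{\mathcal{X}_{\widehat{k+1}}}(d-\ell)$, hence by Proposition \ref{pesominimo} to be $\mathcal{X}_{\widehat{k+1}}$-equivalent to $\sigma\prod_{i=1}^k(1-x_i^{d_i-1})$. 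After normalizing a reference slice to this shape, the decisive point is that $\deg g=d-\ell$ restricts how the slices vary with $\alpha$: the coefficient in $g$ of the top monomial $\prod_{i=1}^k x_i^{d_i-1}$ cannot involve $x_{k+1}$, so the leading scalars all agree, while the coefficients of the next layer $x_i^{d_i-2}\prod_{i'\neq i}x_{i'}^{d_{i'}-1}$ have $x_{k+1}$-degree at most one, so the linear forms defining the slices depend affinely on $\alpha$. I would then align all slices at once by a single $\psi\in\aff(\mathcal{X})$ and, via Lemma \ref{divide2}, read off $f\sim\sigma\prod_{i=1}^k(1-x_i^{d_i-1})\prod_{t=1}^{\ell}(x_{k+1}-\alpha_t)$.

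The main obstacle is precisely this simultaneous alignment, because the naive shear $x_i\mapsto x_i-(a_i x_{k+1}+b_i)$ lies in $\aff(\mathcal{X})$ only when $a_i x_{k+1}\in K_i$ for all $x_{k+1}\in K_{k+1}$, which fails as soon as $d_i<d_{k+1}$ and $a_i\neq 0$. I would resolve this by a dichotomy governed by the inclusions $K_i\subseteq K_{k+1}$: when $d_{k+1}-\ell>d_i$ the affine map $\alpha\mapsto\beta_i^{(\alpha)}$, being injective on the $d_{k+1}-\ell$ nonzero slices if $a_i\neq 0$, would take more than $d_i$ values in $K_i$, which is impossible, so the center is constant and a translation suffices; in the residual boundary range $d_{k+1}-\ell\le d_i<d_{k+1}$ one instead uses a field-compatible ``mixing'' transformation, namely a shear of the large-field coordinate $x_{k+1}$ by the small-field coordinate $x_i$, which is legitimate exactly because $K_i\subseteq K_{k+1}$ and which is produced by Lemma \ref{poligual} together with the weight bookkeeping of Lemma \ref{soma1}. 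Such a transformation can interchange the distinguished coordinate, so that $f$ is realized as a $j\le k$ form rather than the $j=k+1$ form — which is precisely why the statement allows every $1\le j\le k+1$ with $d_{k+1}-\ell\le d_j$. Making this reindexing uniform across all nonzero slices simultaneously is the delicate technical step I expect to consume most of the argument.
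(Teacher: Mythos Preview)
Your induction scheme, base cases, and the $m=d_{k+1}-1$ branch match the paper. The divergence is organizational but consequential: the paper does \emph{not} fix $j=k+1$; it takes the \emph{smallest} index $j$ for which $f$ has an $\mathcal{X}$-linear factor equivalent to $x_j$ (such a $j$ exists and is $\le k+1$ by Corollary~\ref{fatores}). Proposition~\ref{minimal} then gives $m=d_j-1$ or $m=d_j-(d_{k+1}-\ell)$, and the first alternative reduces by induction exactly as you describe. In the second alternative the paper splits again: if $j=1$, Lemma~\ref{poligual} aligns two slices and Lemma~\ref{soma1} with $s=2$ forces all remaining slices to coincide; if $j>1$, minimality of $j$ together with Proposition~\ref{d1dk} gives $d_1\ge d_{k+1}-\ell$, and a short counting argument (every $\alpha\in K_1$ must occur as some slice-center, else $x_1-\alpha$ would be a factor) forces $d_1=d_{k+1}-\ell$. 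An explicit Lagrange-type interpolation then rewrites $f$ so that one can slice in the $x_1$ direction instead, and Lemmas~\ref{poligual} and~\ref{soma1} finish as before.

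Your $m=\ell$ branch is the same hard case, but the resolution you sketch has a gap. Your degree argument correctly shows that the coefficient of $\prod_{i\le k}x_i^{d_i-1}$ is constant across slices and that the next-layer coefficients are affine in $\alpha$; combined with Lemma~\ref{soma1} (take $s=1$) this does give that each slice is a pure translation $\sigma\prod_{i\le k}(1-(x_i-\beta_i^{(\alpha)})^{d_i-1})$ with $\beta_i^{(\alpha)}$ affine in $\alpha$ --- you should state this use of Lemma~\ref{soma1} explicitly rather than leave it implicit. But the second half of your dichotomy, the ``mixing'' shear $x_{k+1}\mapsto x_{k+1}+cx_i$, does not do what you need: it alters the slicing hyperplanes themselves (the product $\prod_t(x_{k+1}-\alpha_t)$ becomes $\prod_t(x_{k+1}+cx_i-\alpha_t)$, which no longer factors through $x_{k+1}$ alone), so it does not straighten the centers $\beta_i^{(\alpha)}$, and several indices $i$ in the boundary range may require simultaneous treatment. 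Lemma~\ref{poligual} as stated is tailored to shears of $x_j$ by $x_1$, and your invocation of it does not explain how it produces the reindexing you want. The paper sidesteps this entirely by passing to the smallest $j$ and, when $j>1$, proving $d_1=d_{k+1}-\ell$ so that slicing in direction $x_1$ becomes the natural move; this is the missing idea in your outline.
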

%
%
\begin{proof}
If $k=0$ the $d < d_1$ and the result follows from  Proposition \ref{minimalcodewords1}.

We will do an induction on $k$, so let's assume that the result holds for $k - 1$. 

If $\ell = d_{k+1}-1$, then the  result
follows from 
Proposition \ref{pesominimo}.

Let $\ell < d_{k+1}-1$ and let $f \in C_\mathcal{X}(d)$ 
be a minimal weight codeword, i.e.\ $|f| = \delta_{\mathcal{X}}(d)$.
From Corollary \ref{fatores} 
$f$ has a factor which is $\mathcal{X}$-equivalent
to $x_{k+1}$.  Let $1 \leq j \leq k+1$ be least integer such that 
$f$ has a factor which is 
 $\mathcal{X}$-equivalent to $x_j$ and 
and let's assume without loss of generality that $x_j - \alpha$ is a factor of $f$ for some $\alpha \in K_j$. Let 
$m > 0$
be the number of elements of $\alpha \in K_j$
such that
$Z_\mathcal{X}(x_j - \alpha) \subset Z_\mathcal{X}(f)$.
From Proposition \ref{minimal} we get  $m=d_j -1$ or
$m=d_j - (d_{k+1}-\ell)$. 

If $m=d_j -1$ then, after applying an $\X$-affine transformation if necessary, we write 
$$
f = (1 - x_{j}^{d_{j} - 1}) g\, ,
$$
for some $g \in C_\mathcal{X}(d - (d_{j} - 1))$, and as in the
proof of Proposition \ref{pesominimo} we show that actually 
we may write $f$ as 
$$
f = (1 - x_{j}^{d_{j} - 1}) g_1\, ,
$$
with $g_1 \in C_{\mathcal{X}_{\widehat{j}}} 
(d - (d_{j} - 1))$. 
In the case where $1 \leq j \leq k$, 
since $m = d_j - 1$ we get from Lemma \ref{numericoj} that 
$\delta_\mathcal{X} (d)
	= \delta_{\mathcal{X}_{\widehat{j}}} (d - (d_{j} - 1))$
and from $| f | = | g_1 |$ we see that $g_1$ is a minimal weight codeword of 	
$C_{\mathcal{X}_{\widehat{j}}} 
(d - (d_{j} - 1))$,  then 
we may apply the induction hypothesis  to get the result.
In the case where $j = k + 1$, from Proposition \ref{minimal} we also get 
$d_k - (d_{k+1}-\ell) \geq 0$ (besides  $m=d_{k+1} -1$) so from 
Lemma \ref{numericok}  we get  $\delta_\mathcal{X} (d)
	= \delta_{\mathcal{X}_{\widehat{k+1}}} (d - (d_{k+1} - 1))$
and from $| f | = | g_1 |$ we see that $g_1$ is a minimal weight codeword of 	
$C_{\mathcal{X}_{\widehat{k+1}}} 
(d - (d_{k+1} - 1))$.  Writing $d - (d_{k+1} - 1) = \sum_{j = 1}^{k - 1} (d_j - 
1) + (d_k - (d_{k + 1} - \ell))$ we see that, as above, we can apply the 
induction hypothesis to $g_1$, either because $d_k - (d_{k + 1} - \ell) > 0$ or 
because we get the result from Proposition  
\ref{pesominimo} if $d_k = d_{k + 1} - \ell$.

Now we assume that 
$m=d_j - (d_{k+1}-\ell)< d_j -1$. From
Proposition \ref{minimal}
we see that there are $d_{k + 1} - \ell$ elements in 
$K_j$ (say, $\beta_1, \ldots, \beta_{d_{k+1} - \ell}$) such that
for all $i \in 
\{1, \ldots, d_{k+1} - \ell\}$ we get  
$|f_{\beta_i}^{(j)}| = \delta_{\mathcal{X}_{\widehat{j}}}(\widehat{d})$,
with
$$
\widehat{d} = d - (d_j - (d_{k+1}-\ell)) = 
\sum_{i=1, i \neq j}^{k+1} (d_i - 1)\, ,
$$ 
while $|f_{\beta_i}^{(j)}| = 0$ for the other elements of $K_j$ (say, $i 
 \in \{ d_{k+1} - \ell + 1, \ldots, d_j\}$).

From Lemma \ref{divide2} we may write $f$ as  
\begin{equation} \label{fchapeu}
f = \widehat{f} \cdot
\prod_{i=d_{k+1}- \ell + 1}^{d_j} (x_j - \beta_i) 
\end{equation}
with $\widehat{f}\in C_\mathcal{X}(\widehat{d})$.

We treat first the case $j=1$. From Lemma \ref{poligual}, there exists 
$\psi \in \aff(\mathcal{X})$ such that
$x_1 = x_1\circ \psi$,
and $g_{\beta_1}^{(1)}=g_{\beta_2}^{(1)}$, where $g=\widehat{f}\circ \psi$,
and without loss of generality we assume that $\widehat{f}=g$. Observe that
$Z_\mathcal{X}(x_i - \beta_i) \subset 
Z_\mathcal{X}(\widehat{f} - \widehat{f}_{\beta_1}^{(1)}) = Z_\mathcal{X}(
\widehat{f} - \widehat{f}_{\beta_2}^{(1)})$ for $i = 1,2$, so from Lemma 
\ref{divide2} we may write 
$$
\widehat{f} = \widehat{f}_{\beta_1}^{(1)} + (x_1 - \beta_1)(x_1 - \beta_2)h
\, ,
$$
with $h \in C_\mathcal{X}(\widehat{d} - 2)$.
If $d_{k+1}-\ell = 2$, then from 
$\widehat{f}_{\beta_1}^{(1)} = \widehat{f}_{\beta_2}^{(1)}$ and equation 
\eqref{fchapeu}
we may write 
$$
f = \widehat{f}_{\beta_1}^{(1)} \cdot
\prod_{i=3}^{d_1} (x_1 - \beta_i)\, ,
$$
and the result follows from applying Proposition \ref{pesominimo} to 
$\widehat{f}_{\beta_1}^{(1)} \in C_{\mathcal{X}_{\widehat{1}}}(\widehat{d})$. 
If $d_{k+1}-\ell > 2$ then for all 
$2 < t \le d_{k+1}-\ell$ we get 
$$
\widehat{f}_{\beta_t}^{(1)} = \widehat{f}_{\beta_1}^{(1)} +
 (\beta_t - \beta_1)(\beta_t - \beta_2)h_{\beta_t}^{(1)}
\, .
$$
If $h_{\beta_t}^{(1)} \neq 0$  then from Lemma \ref{soma1} (taking $s=2$),
we get $|\widehat{f}_{\beta_t}^{(1)}| \ge 2
\delta_{\mathcal{X}_{\widehat{1}}}(\widehat{d})$, a contradiction. 
Hence
$\widehat{f}_{\beta_1}^{(1)}= \widehat{f}_{\beta_t}^{(1)}$ for all 
$1 \le t \le d_{k+1}- \ell$, and from equation \eqref{fchapeu} we may write 
$$
f = \widehat{f}_{\beta_1}^{(1)} \cdot
\prod_{i=d_{k+1}- \ell + 1}^{d_1} (x_1 - \beta_i)\, ,
$$
with $\widehat{f}_{\beta_1}^{(1)} \in C_{\mathcal{X}_{\widehat{1}}}(\widehat{d})$.
Again, the result follows from applying Proposition \ref{pesominimo} to 
$\widehat{f}_{\beta_1}^{(1)}$, which concludes the case $j = 1$.

Assume now that $j > 1$ and 
let $\mathcal{X}_{\widehat{1, j}} := K_{2} \times \cdots \times K_{j-1}  \times 
K_{j+1} 
\times \cdots  \times K_n$. Then for all  $\alpha \in K_1$ we get 
$Z_\mathcal{X}(x_1 - \alpha) \not \subset Z_\mathcal{X}(f)$
and from  
Proposition  \ref{d1dk} we get 
$d_1 \ge d_{k+1} - \ell$.
From equation \eqref{fchapeu} we get  
$|f_{\beta_t}^{(j)}|=|\widehat{f}_{\beta_t}^{(j)}|$,
so Proposition \ref{minimal} implies 
$| \widehat{f}_{\beta_t}^{(j)} | = \delta_{\mathcal{X}_{\widehat{j}}} 
(\widehat{d})$ for all $t = 1, \ldots, d_{k+1} - \ell$. 
Thus, in particular,  
$\widehat{f}_{\beta_1}^{(j)}$ is $\mathcal{X}_{\widehat{j}}$-equivalent
to a function of the form $(1 - x_1^{d_1 -1}) g_1$, where
$g_1\in C_{\mathcal{X}_{\widehat{1,j}}} \left( 
\displaystyle \sum_{i=2, i \neq j}^{k+1} (d_i -1)\right)$,
and $| g_1 | = \delta_{\mathcal{X}_{\widehat{1,j}}}(\sum_{i=2, i \neq j}^{k+1} (d_i -1))$, so we may assume 
\begin{equation}\label{fhat1}
\widehat{f}_{\beta_1}^{(j)} = (1 - x_1^{d_1 -1}) g_1 \, .
\end{equation}
Using Lemma \ref{divide} there exists $h \in C_{\mathcal{X}}(\widehat{d} - 1)$ such that 
$\widehat{f} = \widehat{f}_{\beta_1}^{(j)} + (x_j - \beta_1) h$, and evaluating both sides at $\beta_t$, with $t \in \{2, \ldots, d_{k+1} - \ell\}$, we get 
$\widehat{f}_{\beta_t}^{(j)} = \widehat{f}_{\beta_1}^{(j)} + (\beta_j - \beta_1) h_{\beta_t}^{(j)}$.
We now may apply Lemma \ref{soma1} (replacing $f$ by $\widehat{f}_{\beta_t}^{(j)}$,   $g$ by $\widehat{f}_{\beta_1}^{(j)}$, $h$ by 
$(\beta_j - \beta_1) h_{\beta_t}^{(j)}$), and using that  
$| \widehat{f}_{\beta_t}^{(j)} | = \delta_{\mathcal{X}_{\widehat{j}}} 
(\widehat{d})$ we may conclude that there are $d_1 - 1$ elements $\alpha$ in $K_1$ such that 
$Z_{\mathcal{X}_{\widehat{j}}}(x_1 - \alpha)  \subset 
Z_{\mathcal{X}_{\widehat{j}}}(\widehat{f}_{\beta_t}^{(j)})$.

From Lemma  \ref{divide2}, for every  $1 \le t \le d_{k+1}- \ell$, there exists  
$\alpha_t \in K_1$ such that 
$$
\widehat{f}_{\beta_t}^{(j)} = (1 - (x_1- \alpha_t)^{d_1 -1}) g_t \, ,
$$
(here we are using that $((x_1- \alpha_t)^{d_1 -1} - 1)(x_1 - \alpha_t) = x_1^{d_1} - x_1$)
where, as in Proposition  \ref{pesominimo},
$g_t \in C_{\mathcal{X}_{\widehat{1,j}}}$ 
is a minimal weight function of degree 
$\displaystyle \sum_{i=2, i \neq j}^{k+1} (d_i -1)$.
Note that from \eqref{fhat1} we get  $\alpha_1 = 0$. We also note that  if there exists $\alpha \in K_1$,  distinct from $\alpha_t$ for all $t \in 
\{1, \ldots, d_{k + 1} - \ell\}$ then all functions  
$\widehat{f}_{\beta_t}^{(j)}$
vanish  in $x_1 = \alpha$, hence 
$Z_\mathcal{X}(x_1 - \alpha)  \subset Z_\mathcal{X}(\widehat{f}) \subset Z_\mathcal{X}(f)$,
a contradiction with the assumption $j > 1$.
Thus 
for all $\alpha \in K_1$  there exists $1 \le t \le d_{k+1}- \ell$
such that $\alpha = \alpha_t$, hence  
$d_1 \le d_{k+1}- \ell$ and a fortiori $d_1 = d_{k+1}- \ell$. 

For each  $t \in \{1, \ldots,  d_{1}\}$ let 
$$
h_t(x_j) = \prod_{i=1, i \neq t}^{d_1} (x_j - \beta_i) 
$$
and let 
$$ u = 
\sum_{i=1}^{d_1} \left( 
\left(1 - (x_1-\alpha_i)^{d_1 - 1}\right) \cdot g_i \cdot
\frac{h_i(x_j)}{h_i(\beta_i)}
\right)\,  \cdot \, 
\prod_{s=d_1+1}^{d_j} (x_j - \beta_s)\, .
$$
Clearly, for $d_1 < t \leq d_j$, from the definition of $u$ and \eqref{fchapeu} we get  $u_{\beta_t}^{(j)} = 0 = f_{\beta_t}^{(j)}$.  For $ t \in \{1, \ldots, d_1\}$  we get 
$$ 
u_{\beta_t}^{(j)}  = 
\left(1 - (x_1-\alpha_t)^{d_1 - 1}\right)  g_t 
\prod_{s=d_1+1}^{d_j} (\beta_t - \beta_s) = \widehat{f}_{\beta_t}^{(j)} \prod_{s=d_1+1}^{d_j} (\beta_t - \beta_s) =
f_{\beta_t}^{(j)}.  
$$
Thus we conclude that $u = f$. Letting $x_1 =
\alpha_t$,
for all  $1 \le t \le d_1$ we get
$$
f_{\alpha_t}^{(1)} = 
 g_t \cdot \frac{h_t(x_j)}{h_t(\beta_t)}
\cdot
\prod_{s=d_1+1}^{d_j} (x_j - \beta_s) .
$$
Observe that 
$h_t(x_j)
\displaystyle \prod_{s=d_1+1}^{d_j} (x_j - \beta_s)$ does not vanish only when 
$x_j=\beta_t$, so $|f_{\alpha_t}^{(1)}|=|g_t|$. 
From 
$$|g_t| =
\delta_{\mathcal{X}_{\widehat{1,j}}}
\left(\displaystyle \sum_{i=2, i \neq j}^{k+1} (d_i -1)\right)
\, , \quad
\delta_{\mathcal{X}_{\widehat{1}}}
\left(\displaystyle \sum_{i=2}^{k+1} (d_i -1)\right) =
\delta_{\mathcal{X}_{\widehat{1,j}}}
\left(\displaystyle \sum_{i=2, i \neq j}^{k+1} (d_i -1)\right)
$$
and
$$
d = \sum_{i=1}^{k} (d_i -1) + \ell = \sum_{i=2}^{k+1} (d_i -1),
$$
we get
$$
|f_{\alpha_t}^{(1)}| =
\delta_{\mathcal{X}_{\widehat{1}}}
\left(\displaystyle \sum_{i=2}^{k+1} (d_i -1)\right)
=
\delta_{\mathcal{X}_{\widehat{1}}}(d).
$$
Thus we get $f \in C_\mathcal{X}(d)$, where
$d = \displaystyle \sum_{i=2}^{k+1} (d_i -1)$ and
$|f_{\alpha_1}^{(1)}|=|f_{\alpha_2}^{(1)}| = 
\delta_{\mathcal{X}_{\widehat{1}}}(d)$.
From Lemma \ref{poligual}, there exists 
$\theta \in \aff(\mathcal{X})$ such that  $x_1=x_1 \circ \theta$
and $\widetilde{f}_{\alpha_1}^{(1)}=\widetilde{f}_{\alpha_2}^{(1)}$, where $\widetilde{f} = f \circ \theta$,
and without loss of generality we assume that $\widetilde{f}=f$. Observe that
$Z_\mathcal{X}(x_1 - \alpha_i) \subset 
Z_\mathcal{X}(f - f_{\alpha_1}^{(1)}) = Z_\mathcal{X}(
f - f_{\alpha_2}^{(1)})$ for $i = 1,2$, so from Lemma 
\ref{divide2} we may write 
$$
f = f_{\alpha_1}^{(1)} + (x_1 - \alpha_1)(x_1 - \alpha_2)
\overline{f},
$$
with $\overline{f} \in C_\mathcal{X}(d - 2)$.
If $d_1 = 2$, then $f=f_{\alpha_1}^{(1)}$. If $d_1>2$ then for
all $t \in \{3,\ldots , d_1\}$ we get
$f_{\alpha_t}^{(1)} = f_{\alpha_1}^{(1)} +
(\alpha_t - \alpha_1)(\alpha_t - \alpha_2)
\overline{f}_{\alpha_t}^{(1)}$.
If $\overline{f}_{\alpha_t}^{(1)}\neq 0$ then from
Lemma \ref{soma1} (taking $s=2$),
we get $|f_{\alpha_t}^{(1)}| \ge 2
\delta_{\mathcal{X}_{\widehat{1}}}(d)$, a contradiction.
Hence we must have 
$f_{\alpha_t}^{(1)} = f_{\alpha_1}^{(1)}$ for all 
$1 \le t \le d_1$ 
and
the result follows from applying Proposition \ref{pesominimo} to 
$f=f_{\alpha_1}^{(1)} \in C_{\mathcal{X}_{\widehat{1}}}(d)$.
\end{proof}

%
%

\bibliographystyle{plain}

\end{document}